\newtheorem{theorem}{Theorem}[section]
\newtheorem{proposition}[theorem]{Proposition}
\newtheorem{remark}[theorem]{Remark}
\newtheorem{lemma}[theorem]{Lemma}
\newtheorem{example}[theorem]{Example}
\newtheorem{definition}[theorem]{Definition}
\newcommand{\C}{\mathbb{C}}
\numberwithin{equation}{section}
\begin{document}
	\title{extensions of finite irreducible modules over rank two Lie conformal algebras}
	\author{Lipeng Luo$^{1}$, Yucai Su$^{2}$, Mengjun Wang$^{3}$}
	\address{\textsuperscript{1}Department of Mathematics, Zhejiang University of Science and Technology, Hangzhou, 310023, P.R.China.} 
	\address{\textsuperscript{2}Department of Mathematics, Jimei University, Xiamen, Fujian, 361021, P.R.China.} 
	\address{\textsuperscript{3}School of  Mathematical  Sciences, Nanjing University, Nanjing, 210093, P.R.China.} 
	\email{\textsuperscript{1}luolipeng@zust.edu.cn}
	\email{\textsuperscript{2}ycsu@tongji.edu.cn}
	\email{\textsuperscript{3}wangmj@nju.edu.cn}
	\thanks{This work was supported by the National Natural Science Foundation of China
(Nos. 11971350, 12171129, 12271406, 12471027, 12401035) and the Jiangsu Funding Program for Excellent Postdoctoral Talent (No. 2023ZB063) and the Natural Science Foundation of Shanghai (No. 24ZR1471900).}

	\subjclass[2010]{17B10; 17B65; 17B68}

\keywords{conformal algebra, conformal module, extensions, rank two}
\footnote{The third author is the corresponding author: Mengjun Wang(wangmj@nju.edu.cn)}
\begin{abstract}
In this paper, we give a complete classification of extensions of finite irreducible conformal modules over rank two Lie conformal algebras.
	\end{abstract}
\maketitle
\baselineskip=16pt
	\section{Introduction}
	In the 1990s, V. Kac \cite{dk,k} introduced an axiomatic definition of the Lie conformal algebra, which gives an algebraic description of the singular part of the operator product extansion (OPE) of the chiral fields in 2-dimensional conformal field theory. In addition to being closely related to vertex algebra and conformal field theory, the theory of Lie conformal algebra is also closely associated with infinite-dimensional Lie algebra \cite{bdak}, Hamiltonian formal systems of nonlinear evolution equations \cite{bdk}, and quantum physics \cite{en}, and thus has received more attention in recent years.
	
	A conformal algebra is called finite if it is a finitely generated $\mathbb{C}[\partial]$-module, and the rank of a finite conformal algebra is just its rank as a $\mathbb{C}[\partial]$-module. It was shown in \cite{bch} that a rank two conformal algebra is one of the following four types up to isomorphism:
	\begin{enumerate}[(i)]
		\item a semisimple conformal algebra;
		\item a solvable conformal algebra;
		\item the direct sum of a commutative Lie conformal algebra of rank one and the Virasoro conformal algebra $Vir$ (called in this paper the Lie conformal algebra of Type I);
		\item and what we called the Lie conformal algebra of Type II(see case (2ii) in Proposition \ref{type12}).
	\end{enumerate}
The classification of their finitely irreducible conformal modules can be found in \cite{ck,xhw}. Even for finite semisimple conformal algebras, however, conformal modules of Lie conformal algebras are generally not completely irreducible. Therefore, solving the extension problem plays an essential role in studying the representation theory of conformal algebras. For instance, extensions of finite irreducible conformal modules over the Virasoro, the current, the Neveu-Schwarz and the semi-direct sum of the Virasoro and the current conformal algebras were investigated by Cheng, Kac and Wakimoto in \cite{ckw1,ckw2}, that over supercurrent conformal algebras were classified by Lam in \cite{l}, and that over the Schr\"{o}dinger-Virasoro conformal algebras were considered by Yuan and Ling in \cite{yl}.

In this paper, extensions of finite irreducible modules over rank two conformal algebras are characterized by dealing with certain polynomial equations induced by corresponding module actions. Let $\mathcal{R}$ be a conformal algebra of rank two. If $\mathcal{R}$ is semisimple, then $\mathcal{R}$ is isomorphic to one of the following:$$Cur(\mathfrak{g}),\quad Vir\ltimes Cur(\mathfrak{h}),\quad Vir\oplus Vir,$$ where $Cur(\mathfrak{g})$ (resp. $Cur(\mathfrak{h})$) is the associated current algebra of Lie algebra $\mathfrak{g}$ (resp. $\mathfrak{h}$) with $dim(\mathfrak{g})=2$(resp. $dim(\mathfrak{h})=1$) \cite{dk}. Since extensions of finite irreducible modules over $Cur(\mathfrak{g})$ and $ Vir\ltimes Cur(\mathfrak{h})$ has been given in \cite{ckw1}, we focus on $Vir\oplus Vir$ for the semisimple case and the results are listed in Theorem \ref{t21}, \ref{t22} and \ref{t23}. The nontrivial extensions for solvable rank two conformal algebras can be seen in Theorem \ref{t11}, \ref{t12} and \ref{t13}, while those for non-semisimple and non-solvable Lie conformal algebras of Type I are described in Theorem \ref{t31}, \ref{t32} and \ref{t33}. If $\mathcal{R}$ is a non-semisimple and non-solvable Lie conformal algebra of Type II, one can find a basis $ \{A,B\} $ such that  	\begin{equation*}
[A_\lambda A]=(\partial+2\lambda)A+Q(\partial,\lambda)B,\quad [A_\lambda B]=(\partial+a \lambda +b)B,\quad[B_\lambda B]=0,
\end{equation*}
where $a,b \in \mathbb{C}$ and  $Q(\partial,\lambda)$ is some skew-symmetric polynomial depending on $a,b$, i.e. $Q(\partial,\lambda)=-Q(\partial,-\partial-\lambda)$. If $Q(\partial,\lambda)=\beta(\partial+2\lambda),a=1,b=0,$ $\mathcal{R}$ is the algebra called $\mathcal{L}(\beta)$ in \cite{wx}. If $Q(\partial,\lambda)=0$, $\mathcal{R}$ is just the Lie conformal algebra $\mathcal{W}(a,b)$ whose extension problem has been investigated in \cite{lhw1,lhw2}. Particularly, $\mathcal{W}(1-b,0)$ is the Lie conformal algebra $\mathcal{W}(b)$ in \cite{ly2} and $\mathcal{W}(1,0)$ is just the Heisenberg-Virasoro Lie conformal algebra in \cite{ly1}. So in this case we consider $\mathcal{R}$ under the condition that $Q(\partial,\lambda)\neq0$ and the results can be found in Theorem \ref{t41}, \ref{t42} and \ref{t43}. Fixed $a=1,b=0,Q(\partial,\lambda)=\partial+2\lambda$, our results are consistent with those mentioned in \cite{yw}.

	The paper is organized as follows. In section \ref{s2}, we give basic definitions and related properties of Lie conformal algebras and their modules. In section \ref{s3}, we discuss extensions of finite irreducible modules over a semisimple rank two Lie conformal algebra. In section \ref{s4}, we consider extensions of finite irreducible modules over a solvable rank two Lie conformal algebra. For finite irreducible modules over non-solvable and non-semisimple Lie conformal algebras which have two different types, we give a complete classification of extensions in section \ref{s5} and section \ref{s6} respectively. All vector spaces and tensor products are considered over the complex field $\mathbb{C}$. We use notation $\mathbb{C}^\times$ to represent the set of nonzero complex numbers.

	\section{Preliminaries}\label{s2}
	In this section, we recall the definition of Lie conformal algebras, conformal modules and their extensions, and some known results that are useful in this paper.  For more details, one can refer to \cite{bch,ck,ckw1,k,xhw}.
	\begin{definition}{\label{d1}} A \textbf{Lie conformal algebra} $\mathcal{R}$ is a $\mathbb{C}[\partial]$-module equipped with a $\mathbb{C}$-linear map (called \textbf{$\lambda$-bracket})  $\mathcal{R} \otimes \mathcal{R} \rightarrow \mathcal{R}[\lambda]$, $a\otimes b \mapsto [a_\lambda b]$, satisfying the following axioms
		\begin{equation*}
		\begin{aligned}
		&[\partial a_\lambda b]=-\lambda[a_\lambda b], \   [a_\lambda \partial b]=(\partial+\lambda)[a_\lambda b],\ \ (conformal\   sequilinearity),\\
		&[a_\lambda b]=-[b_{-\lambda-\partial}a]\ \ (skew-symmetry),\\
		&[a_\lambda [b_\mu c]]=[[a_\lambda b]_{\lambda+\mu}c]+[b_\mu[a_\lambda c]]\ \ (Jacobi \ identity),
		\end{aligned}
		\end{equation*}
		for $a$, $b$, $c\in \mathcal{R}$.
	\end{definition}
	
A Lie conformal algebra $\mathcal{R}$ is called \textbf{finite} if $\mathcal{R}$ is finitely generated as a $\mathbb{C}[\partial]$-module. The \textbf{rank} of a finite Lie conformal algebra is just its rank as a $\mathbb{C}[\partial]$-module.

 \begin{definition}{\label{d2}} Let $\mathcal{R}$ be a Lie conformal algebra. A \textbf{conformal $\mathcal{R}$-module} $V$  is a $\mathbb{C}[\partial]$-module endowed with a $\mathbb{C}$-linear map  $\mathcal{R} \otimes V \rightarrow V[\lambda]$, $a\otimes v \mapsto a_\lambda v$, satisfying the following axioms:
	\begin{equation*}
	\begin{aligned}
	&(\partial a)_\lambda v=-\lambda(a_\lambda v), \ \   a_\lambda (\partial v)=(\partial+\lambda)a_\lambda v,\\
	&a_\lambda (b_\mu v)=[a_\lambda b]_{\lambda+\mu}v+b_\mu(a_\lambda v),
	\end{aligned}
	\end{equation*}
	for all $a$, $b\in \mathcal{R}$ and $v\in V$.
\end{definition}
A conformal module $V$ is called \textbf{irreducible} if there is no nonzero submodule $W$ such that $W\neq V$, and $V$ is said to be a trivial $\mathcal{R}$-module if $\mathcal{R}$ acts on $V$ trivially. For any $\eta \in \mathbb{C}$, we can obtain a trivial $\mathcal{R}$-module $\mathbb{C}c_{\eta}=\mathbb{C}$, which is determined by $\eta$, via the action $\partial c_{\eta}=\eta c_{\eta}, \mathcal{R}_\lambda c_{\eta}=0$. It is easy to check that the modules $\mathbb{C}c_{\eta}$ with $\eta \in \mathbb{C}$ exhaust all trivial irreducible $\mathcal{R}$-modules.
\begin{definition}
		Let $V$ and $W$ be two modules over a Lie conformal algebra $\mathcal{R}$. An \textbf{extension} of $W$ by $V$ is an exact sequence of $\mathcal{R}$-modules of the form
		\begin{align}
		0\longrightarrow V \stackrel{i}{\longrightarrow} E \stackrel{p}{\longrightarrow}W \longrightarrow 0,
		\end{align}
		where $E$ is isomorphic to $V\oplus W$ as a vector space.
		Two extensions $0\longrightarrow V \stackrel{i}{\longrightarrow} E \stackrel{p}{\longrightarrow}W \longrightarrow 0$ and $0\longrightarrow V \stackrel{i'}{\longrightarrow} E' \stackrel{p'}{\longrightarrow}W \longrightarrow 0$ are said to be \textbf{equivalent} if there exists a homomorphism of modules such that the following diagram commutes
		\begin{align}
		\begin{CD}
		0 @>>> V   @>i>>   E   @>p>>   W   @>>>0  \\
		@.      @V{1_V}VV       @V{\Psi}VV       @V{1_W}VV  @.    \\
		0 @>>> V   @>i'>>   E'   @>p'>>   W  @>>>0.
		\end{CD}
		\end{align}
				
\end{definition}

Obviously, the direct sum of modules $V\oplus W$ gives rise to an extension $0\to V\to V\oplus W\to W\to 0$.  Any extension $0\to V\to E\to W\to 0 $, which is equivalent to  $0\rightarrow V\to V\oplus W\to W\to 0$, is  called \textbf{trivial extension}.

In general, an extension can be thought of as the direct sum of vector spaces $E=V\oplus W$, where $V$ is a submodule of $E$, while for $w\in W$ we have
\begin{align*}
a_\lambda\cdot w=a_\lambda w+f_{a_\lambda}(w),\   a\in \mathcal{R},
\end{align*}
where $f_{a_\lambda}: W\to \mathbb{C}[\lambda]\otimes V$ is a linear map satisfying the \textbf{cocycle} condition:
\begin{align*}
f_{[a_\lambda b]_{\lambda+\mu}}(w)=f_{a_\lambda}(b_\mu w)+a_\lambda f_{b_\mu}(w)-f_{b_\mu}(a_\lambda w)-b_\mu f_{a_\lambda}(w),\   b\in \mathcal{R}.
\end{align*}
The set of all cocycles forms a vector space $\mathcal{E}xt(W,V)$ over $\mathbb{C}$. Cocycles equivalent to trivial extensions are called \textbf{coboundaries}. They form a subspace $\mathcal{E}xt^c(W,V)$  and the quotient space $\mathcal{E}xt(W,V)/\mathcal{E}xt^c(W,V)$ is denoted by $Ext(W,V)$. The dimension of the quotient space is called the \textbf{dimension} of the space of extensions of $W$ by $V$, denoted by $dim(Ext(W,V))$. From now on, we will say two extensions are \textbf{equivalent} if they belong to the same equivalent class unless confusion is possible.

\begin{example}
	Let $\mathcal{R}$ be an arbitrary conformal algebra, and we can consider extensions of trivial irreducible $\mathcal{R}$-modules of the form
	\begin{align}\label{f00}
	0\longrightarrow \mathbb{C}c_{\eta} \longrightarrow  E\longrightarrow  \mathbb{C}c_{\bar{\eta}} \longrightarrow   0.
	\end{align}
	In this case, $E$ as a vector space is isomorphic to $\mathbb{C}c_{\eta}\oplus \mathbb{C}c_{\bar{\eta}}$, where $\mathbb{C}c_{\eta}$ is a $\mathcal{R}$-submodule, and the following identities hold in $E$:
	\begin{equation}
	R_\lambda c_{\bar{\eta}}=f_R(\lambda)c_{\eta},\quad \partial c_{\bar{\eta}}=\bar{\eta}c_{\bar{\eta}}+tc_{\eta},\label{ex0}
	\end{equation}
	for any $R\in\mathcal{R}$, where $\eta, \bar{\eta}, t \in \mathbb{C}$ and $f_R(\lambda)$ is some polynomial depending on $R$. Since $E$ is a $\mathcal{R}$-module, it follows from $R_\lambda (\partial c_{\bar{\eta}})=(\partial+\lambda)R_\lambda c_{\bar{\eta}}$ that $$\bar{\eta}f_R(\lambda)=(\eta+\lambda) f_R(\lambda)$$ which implies $f_R(\lambda)=0$ for any $R\in\mathcal{R}$. Assume that (\ref{f00}) is a trivial extension, that is, there exists $c_\sigma=kc_\eta+lc_{\bar{\eta}}$, where $k,l\in\mathbb{C}$ and $l\neq0$, such that $$\partial c_{\sigma}=\bar{\eta} c_{\sigma}=\bar{\eta}kc_\eta+\bar{\eta}lc_{\bar{\eta}},\quad \mathcal{R}_\lambda c_{\sigma}=0.$$On the other hand, it follows from (\ref{ex0}) that $$\partial c_{\sigma}=k\partial c_\eta+l\partial c_{\bar{\eta}}=(\eta k+tl)c_\eta+\bar{\eta}lc_{\bar{\eta}}.$$ So we have $(\bar{\eta}-\eta) k=tl$. 
	
	If $\bar{\eta}\neq\eta$, for arbitrary $t$, we can find such $k,l\in\mathbb{C}$, and $E$ is always a trivial extension. If $\bar{\eta}=\eta$, $E$ is trivial only when $t=0$. Therefore, dim($Ext(\mathbb{C}c_{\eta},\mathbb{C}c_{\bar{\eta}})$)=$ \delta_{\eta,\bar{\eta}} $ and when $\eta=\bar{\eta}$, the nontrivial extensions are given (up to equivalence) by $$\mathcal{R}_\lambda c_{\bar{\eta}}=0,\quad \partial c_{\bar{\eta}}=\bar{\eta}c_{\bar{\eta}}+kc_{\eta}$$ with $k\neq0$.
\end{example}

	For convenience, we use $\mathcal{R}$ to denote a free rank two Lie conformal algebra in the sequel. Any finite semisimple Lie conformal algebra, as shown in \cite{dk}, is the direct sum of $Vir,\ Cur(\mathfrak{g}),\ Vir\ltimes Cur(\mathfrak{g})$, where $Cur(\mathfrak{g})$ is the current conformal algebra associated with a finite dimensional semisimple Lie algebra $\mathfrak{g}$. So if $\mathcal{R}$ is semisimple, then $\mathcal{R}$ is isomorphic to either the direct sum of two Virasoro Lie conformal algebras, or $Cur(\mathfrak{g})$ with $ dim(\mathfrak{g})=2 $, or $Vir\ltimes Cur(\mathfrak{h})$ with $dim(\mathfrak{h})=1$. As for the non-semisimple case, we have the following proposition. 
	\begin{proposition}\label{type12}[\cite{bch}, Theorem 3.21]
		Let $\mathcal{R}$ be a rank two  Lie conformal algebra that is not semisimple. 
		\begin{itemize}
			\item[(1)] If $\mathcal{R}$ is solvable, then there is a basis $\{A,B\}$ such that
			\begin{equation}\label{a1}
			[A_\lambda A]=Q_1(\partial,\lambda)B,\ [A_\lambda B]=p(\lambda)B,\ [B_\lambda B]=0,
			\end{equation}
			for some polynomial $p(\lambda)$ and some skew-symmetric polynomial $Q_1(\partial,\lambda)$ satisfying \\$p(\lambda)Q_1(\partial,\lambda)=0$.
			
			\item[(2)] If $\mathcal{R}$ is neither solvable nor semisimple, then there are two classes.
			\begin{itemize}
				\item[(2i)] $\mathcal{R}$ is the direct sum of a rank one commutative Lie conformal algebra and the Virasoro Lie conformal algebra. That is, there is a basis $\{A,B\}$ of $\mathcal{R}$ satisfying 	\begin{equation}\label{a2}
				[A_\lambda A]=(\partial+2\lambda)A,\ [A_\lambda B]=0,\ [B_\lambda B]=0.
				\end{equation}
				\item[(2ii)] There is a basis $\{A,B\}$ of $\mathcal{R}$ such that
				\begin{equation}\label{a3}
				\begin{aligned}
					&[A_\lambda A]=(\partial+2\lambda)A+Q(\partial,\lambda)B,\\
			&[A_\lambda B]=(\partial+a \lambda +b)B,\ [B_\lambda B]=0,
				\end{aligned}
				\end{equation}
				where $a,b \in \mathbb{C}$ and   $Q(\partial,\lambda)$ is some skew-symmetric polynomial depending on $a,b$. Moreover, $Q(\partial,\lambda)\neq 0$ only when $a\in \{1, 0, -1, -4, -6 \}$ and $b=0$, in which case we document the explicit formula for $Q(\partial,\lambda)$ in the following table. 
				\begin{center}
				\begin{tabular}{|c|c|}
					\hline 
					$a$ &  $Q(\partial,\lambda)$, $c, d \in \C$,  \\ 
					\hline 
					$1$ &  $c(\partial+2\lambda)$  \\ 
					\hline 
					$0$ &  $c(\partial+2\lambda)(\partial+\lambda)\lambda+d(\partial+2\lambda)\partial$  \\ 
					\hline 
					$-1$ &  $c(\partial+2\lambda)\partial^2+d(\partial+2\lambda)(\partial+\lambda)\partial\lambda$  \\ 
					\hline 
					$-4$ &  $c(\partial+2\lambda)(\partial+\lambda)^3\lambda^3$  \\ 
					\hline
					$-6$ &  $c(\partial+2\lambda)[11(\partial+\lambda)^4\lambda^4+2(\partial+\lambda)^3\partial^2\lambda^3]$  \\ 
					\hline 
				\end{tabular} 
				\end{center}	
			\end{itemize}
		\end{itemize}
	\end{proposition}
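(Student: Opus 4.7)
The plan is to analyze the most general form of the $\lambda$-brackets on a fixed $\mathbb{C}[\partial]$-basis of $\mathcal{R}$, impose the Lie conformal axioms (conformal sesquilinearity, skew-symmetry, Jacobi), and split into cases according to the position of the radical. Fix a basis $\{A,B\}$ of $\mathcal{R}$ and write
\begin{equation*}
[A_\lambda A]=f_1 A+g_1 B,\quad [A_\lambda B]=f_2 A+g_2 B,\quad [B_\lambda B]=f_3 A+g_3 B,
\end{equation*}
with all coefficients in $\mathbb{C}[\partial,\lambda]$. Skew-symmetry immediately forces the pairs $(f_1,g_1)$ and $(f_3,g_3)$ to satisfy the symmetry $P(\partial,\lambda)=-P(\partial,-\partial-\lambda)$, and couples $(f_2,g_2)$ to the opposite bracket $[B_\lambda A]$.

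For part (1), since $\mathcal{R}$ is solvable and non-abelian, the derived subalgebra $\mathcal{R}'=[\mathcal{R},\mathcal{R}]$ is a proper nonzero $\mathbb{C}[\partial]$-submodule of rank at most one. After replacing $B$ by a generator of (the torsion-free part of) $\mathcal{R}'$ and rescaling $A$ accordingly, I can arrange $\mathcal{R}'\subseteq\mathbb{C}[\partial]B$; this kills $f_1,f_2,f_3$, and then $[B_\lambda B]=0$ by a rank-one skew-symmetry argument. The remaining brackets collapse to $[A_\lambda A]=Q_1(\partial,\lambda)B$ and $[A_\lambda B]=k(\partial,\lambda)B$, and the Jacobi identity applied to $(A,A,B)$ yields a functional equation in $k$ which, together with a final basis adjustment inside $A+\mathbb{C}[\partial]B$, forces $k(\partial,\lambda)=p(\lambda)$ and the compatibility constraint $p(\lambda)Q_1(\partial,\lambda)=0$.

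For part (2), the radical $\mathrm{Rad}(\mathcal{R})$ is a nonzero proper ideal of rank one, and the quotient $\mathcal{R}/\mathrm{Rad}(\mathcal{R})$ is a rank-one semisimple conformal algebra, hence isomorphic to the Virasoro conformal algebra. Lifting a Virasoro generator and rescaling, I obtain $A\in\mathcal{R}$ with $[A_\lambda A]=(\partial+2\lambda)A+Q(\partial,\lambda)B$ for some skew-symmetric $Q$, while choosing $B$ to generate $\mathrm{Rad}(\mathcal{R})$ gives $[B_\lambda B]=0$. The Jacobi identity on $(A,A,B)$ then shows that $[A_\lambda B]$ has no $A$-component and that its $B$-component is forced into the affine shape $(\partial+a\lambda+b)B$ for some scalars $a,b\in\mathbb{C}$. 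When $Q\equiv 0$, a final change of basis in the subcase $a=b=0$ produces \eqref{a2} and places us in case (2i); otherwise we land in \eqref{a3} and case (2ii).

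The main obstacle is the last step of case (2ii): pinning down the admissible skew-symmetric polynomials $Q(\partial,\lambda)$ for each given $(a,b)$. Substituting the above expressions into the Jacobi identity for $(A,A,A)$ and cancelling the pure Virasoro contribution produces a polynomial identity linking the shifted values $Q(\partial,\lambda+\mu)$, $Q(\partial,\lambda)$, $Q(\partial,\mu)$, $Q(\partial+\lambda,\mu)$, $Q(\partial+\mu,\lambda)$ and $Q(-\lambda-\mu,\lambda)$, with coefficients linear in $(\partial,\lambda,\mu)$ and in $(a,b)$. Expanding in low-order powers of $\mu$ and matching coefficients of the resulting polynomial in $\mu$ forces $b=0$; the residual identity then becomes a bilinear recursion on the coefficients of $Q$ whose solvability pins $a$ to the finite set $\{1,0,-1,-4,-6\}$, after which the explicit bases listed in the table are recovered by direct polynomial computation. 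This case analysis, governed by delicate degree bounds tied to the value of $a$, is where the real work lies and where I expect the main difficulty.
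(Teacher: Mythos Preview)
The paper does not prove this proposition at all: it is quoted verbatim as Theorem~3.21 of \cite{bch} and used only as background input for the representation-theoretic work that follows. So there is no ``paper's own proof'' to compare your outline against; any detailed argument here would necessarily go beyond what the present paper contains.

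Your sketch is, in broad strokes, the standard route to such a classification (locate the derived ideal or the radical, normalise the $\lambda$-brackets, and let the Jacobi identity cut down the parameters), and it is essentially how the cited reference proceeds. One point in your write-up is garbled, however: the split between cases~(2i) and~(2ii) is governed by whether $[A_\lambda B]$ vanishes, not by whether $Q\equiv 0$ together with $a=b=0$. If $[A_\lambda B]=0$, the Jacobi identity on $(A,A,A)$ forces $Q(\partial,\lambda)=(\partial+2\lambda)R(\partial)$ for some polynomial $R$, and the substitution $A\mapsto A+R(\partial)B$ then kills $Q$, landing you in~\eqref{a2}. Conversely, if $[A_\lambda B]\neq 0$, the Virasoro module structure on $\mathbb{C}[\partial]B$ pins $[A_\lambda B]=(\partial+a\lambda+b)B$ (possibly after rescaling $B$), and you are in~\eqref{a3}; note that $a=b=0$ still gives $[A_\lambda B]=\partial B\neq 0$, which is genuinely case~(2ii), not~(2i). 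Apart from this mis-stated dichotomy, your identification of the hard step --- solving the $(A,A,A)$ Jacobi constraint for $Q$ and extracting the exceptional list $a\in\{1,0,-1,-4,-6\}$ with $b=0$ --- is accurate.
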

\begin{remark}
	A polynomial $Q(\partial,\lambda)$ is called \textbf{skew-symmetric} if $Q(\partial,\lambda)=-Q(\partial,-\lambda-\partial)$.
\end{remark}
In this study, we refer to the two classes of non-solvable and non-semisimple rank two conformal algebras mentioned above as \textbf{Lie conformal algebras of Type I and Type II}, respectively. The complete classification of the finite nontrivial irreducible modules of a current conformal algebra, and its semidirect sum with the Virasoro conformal algebra, was provided in \cite{ck}, while that of $\mathcal{R}$ of other classes was described in \cite{xhw}.
\begin{proposition}[\cite{xhw}, Theorem 3.2]\label{m}  Suppose that $\mathcal{R}=\mathbb{C}[\partial]A\oplus \mathbb{C}[\partial]B$ is a Lie conformal algebra of rank two. Then any non-trivial finite irreducible $\mathcal{R}$-module is free of rank one. Moreover,  if  $V=\mathbb{C}[\partial]v$ is a non-trivial irreducible $\mathcal{R}$-module, then  the action of $\mathcal{R}$ on $V$ has to be  one of the  following cases:
	\begin{itemize}
		\item[(i)] If $\mathcal{R}=\mathbb{C}[\partial]A \oplus \mathbb{C}[\partial]B$ is a direct sum of two   Virasoro Lie conformal algebras with $[A_\lambda B]=0$,  then  either
		\[ A_\lambda v=(\partial+\alpha_1\lambda+\beta_1)v,\ B_\lambda v=0, \  \text{for}\ \text{some}  \   \beta_1,\  0 \neq\alpha_1 \in \mathbb{C},\]
		or
		\[ A_\lambda v=0,\ B_\lambda v=(\partial+\alpha_2\lambda+\beta_2)v, \    \text{for}\ \text{some}  \   \beta_2,\  0 \neq\alpha_2 \in \mathbb{C}.\]
		\item[(ii)] If $\mathcal{R}$ is solvable with the relations (\ref{a1}), then we have
		$ A_\lambda v=\phi_A(\lambda)v$, $B_\lambda v=\phi_B(\lambda)v$,
		where $\phi_A(\lambda)$, $\phi_B(\lambda)$ are not zero  simultaneously.
		Moreover, $\phi_B(\lambda) \neq 0 $ only if $p(\lambda)=Q_1(\partial,\lambda)=0$.
		\item[(iii)] Suppose that $\mathcal{R}$ is the Lie conformal algebra defined in (\ref{a2}), then either \[ A_\lambda v=(\partial+\alpha\lambda+\beta)v,\ B_\lambda v=0, \  \text{for}\ \text{some}  \   \beta,\  0 \neq\alpha \in \mathbb{C},\]
		or
		\[ A_\lambda v=0,\ B_\lambda v=\phi(\lambda)v, \  \text{for}\ \text{some}\ \text{nonzero}\  \phi(\lambda)\in \mathbb{C}[\lambda].\]
		\item[(iv)]  Suppose that $\mathcal{R}$ is the Lie conformal algebra defined in (\ref{a3}). Then
		\[ A_\lambda v=(\partial+\alpha\lambda+\beta),\ B_\lambda v=\gamma v,\]
		where $\alpha, \beta, \gamma \in \mathbb{C}$ such that $\gamma \neq 0$ only if $a=1$, $b=0$ and $Q(\partial,\lambda)=0$. Further, if $\gamma=0$, then $\alpha \neq 0$.
	\end{itemize}
\end{proposition}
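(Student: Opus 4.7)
The plan is to prove the result in two stages: (1) reduce to the case of rank-one free $\mathbb{C}[\partial]$-modules, (2) determine the explicit action in each of the four structural cases by solving polynomial equations arising from the Jacobi identity.

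For stage (1), since $V$ is finitely generated over the PID $\mathbb{C}[\partial]$, write $V = T \oplus F$ where $T$ is the torsion part. Using the sesquilinearity axiom $(\partial a)_\lambda v = -\lambda\, a_\lambda v$ together with the polynomial nature of $a_\lambda v$ in $\lambda$, one checks that $T$ is an $\mathcal{R}$-submodule. Irreducibility then forces either $V = T$ (which yields only trivial modules $\mathbb{C}c_\eta$, excluded by the nontriviality hypothesis) or $T = 0$, so $V$ is free. To bound the rank by one, I would exploit the presence of a distinguished generator in $\mathcal{R}$: in cases (i), (iii), and (iv) this is the Virasoro generator, for which rank-one freeness of nontrivial finite irreducible modules is classical; in case (ii) one uses the explicit brackets for $A$ and $B$ to show directly that any cyclic $\mathbb{C}[\partial]$-submodule generated by an eigenvector-like element already exhausts $V$.

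For stage (2), write $V = \mathbb{C}[\partial]v$ with $A_\lambda v = f(\partial,\lambda)v$ and $B_\lambda v = g(\partial,\lambda)v$ for polynomials $f,g$. The Jacobi identity, applied to $a_\lambda(b_\mu v)$ for $a,b \in \{A,B\}$, translates into the functional equation
\begin{equation*}
f_b(\partial+\lambda,\mu)\, f_a(\partial,\lambda) - f_a(\partial+\mu,\lambda)\, f_b(\partial,\mu) = f_{[a_\lambda b]}(\partial,\lambda+\mu),
\end{equation*}
where $f_x$ denotes the $\lambda$-action polynomial of $x$. Substituting the explicit brackets (\ref{a1})--(\ref{a3}) and matching coefficients in $\partial,\lambda,\mu$ yields the stated forms. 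In cases (iii) and (iv), the relation $[A_\lambda A] = (\partial+2\lambda)A$ forces $f$ to be either zero or the Virasoro form $\partial + \alpha\lambda + \beta$ with $\alpha \neq 0$ (the latter required for irreducibility), recovering the standard Virasoro classification. In case (i), setting both $f,g$ of Virasoro form together with $[A_\lambda B] = 0$ produces a residual cross-term proportional to $(\lambda-\mu)\partial$ that cannot vanish identically, so at most one of $A,B$ acts nontrivially. In case (ii), the absence of any $(\partial + 2\lambda)A$ contribution in $[A_\lambda A]$ prevents $f$ from depending on $\partial$; an analogous analysis of $[A_\lambda B] = p(\lambda)B$, combined with the compatibility constraint $p(\lambda)Q_1(\partial,\lambda) = 0$, handles $g$.

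The main obstacle I anticipate lies in stage (1), in particular establishing rank-one freeness in case (ii) where neither generator satisfies the Virasoro relation and the usual Virasoro arguments do not apply directly. Once rank-one freeness is secured, stage (2) reduces to systematic polynomial bookkeeping. The additional parameter conditions (for instance $\alpha \neq 0$ in (i), (iii), (iv), and $\gamma \neq 0$ only when $a=1,\, b=0,\, Q(\partial,\lambda)=0$ in case (iv)) will emerge naturally: the degenerate polynomial solutions to the Jacobi equations correspond precisely to reducible modules, which are ruled out by the irreducibility hypothesis.
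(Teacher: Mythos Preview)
The paper does not supply its own proof of this proposition: it is quoted verbatim as Theorem~3.2 of \cite{xhw} and used as input for the extension computations that follow. So there is nothing in the present paper to compare your argument against.

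That said, your outline is broadly the standard route, and stage~(2) is fine: once $V=\mathbb{C}[\partial]v$ is established, the Jacobi constraints you write down do determine $f$ and $g$ as claimed, and the degenerate solutions do correspond to reducible modules. One small correction: in case~(iv) the relation $[A_\lambda A]=(\partial+2\lambda)A+Q(\partial,\lambda)B$ carries an extra $Q$-term, so the equation for $f$ is coupled to $g$; you must first argue that $g$ is a constant (using $[B_\lambda B]=0$ and $[A_\lambda B]=(\partial+a\lambda+b)B$) before the Virasoro analysis for $f$ goes through cleanly.

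The genuine gap is exactly where you flag it, stage~(1) for case~(ii). Your proposed fix---``any cyclic $\mathbb{C}[\partial]$-submodule generated by an eigenvector-like element already exhausts $V$''---is circular as stated: the eventual action $A_\lambda v=\phi_A(\lambda)v$, $B_\lambda v=\phi_B(\lambda)v$ involves no $\partial$, so every $\mathbb{C}[\partial]$-submodule is automatically an $\mathcal{R}$-submodule and irreducibility forces rank one, but you cannot invoke that form of the action before proving rank one. The argument in \cite{xhw} instead exploits the solvable structure directly: $\mathbb{C}[\partial]B$ is an abelian ideal, so one first analyzes how $B$ acts (its image generates a submodule on which $B$ acts through $[B_\lambda B]=0$), reduces to a quotient on which only $A$ acts, and then uses that a single element with $[A_\lambda A]=Q_1(\partial,\lambda)B$ behaves essentially like an abelian conformal algebra on that quotient. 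Filling this in is the real work; the Virasoro shortcut is unavailable here.
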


	\section{For semisimple rank two Lie conformal algebras}\label{s3}
	
	In this section, we consider $\mathcal{R}$ as a semisimple rank two Lie conformal algebra. Then $\mathcal{R}$ is isomorphic to either the direct sum of two Virasoro Lie conformal algebras, or $Cur(\mathfrak{g})$ with $ dim(\mathfrak{g})=2 $, or $Vir\ltimes Cur(\mathfrak{h})$ with $dim(\mathfrak{h})=1$. Since the extension problem of $Cur(\mathfrak{g})$ and $Vir\ltimes Cur(\mathfrak{h})$ has been discussed in \cite{ckw1}, we can assume $\mathcal{R}=\mathbb{C}[\partial]A\oplus\mathbb{C}[\partial]B$ with
	\begin{equation}
	[A_\lambda A]=(\partial+2\lambda)A,\ [A_\lambda B]=0,\ [B_\lambda B]=(\partial+2\lambda)B.
	\end{equation}
	Let $V$ be a non-trivial finite irreducible $\mathcal{R}$-module. According to case (i) in Proposition \ref{m}, \begin{equation}
	V\cong V_{\delta,\alpha,\beta}=\mathbb{C}[\partial]v,\quad A_\lambda v=\delta_1(\partial+\alpha_1\lambda+\beta_1)v,\quad B_\lambda v=\delta_2(\partial+\alpha_2\lambda+\beta_2)v,
	\end{equation}
	where $\delta_i\in\{0,1\},\beta_i,0\neq\alpha_i\in\mathbb{C}$ for $i=1,2$, and $\delta_1^2+\delta_2^2=1$. 
	
	By Definition \ref{d2}, a $\mathcal{R}$-module structure on $V$ is given by $A_\lambda,B_\lambda\in End_\mathbb{C}(V)[\lambda]$ such that
	\begin{align}
	&[A_{\lambda},A_{\mu}]=(\lambda-\mu)A_{\lambda+\mu}, \label{m21}\\
	&[A_{\lambda},B_{\mu}]=0, \label{m22}\\
	&[B_{\lambda},B_{\mu}]=(\lambda-\mu)B_{\lambda+\mu}, \label{m23}\\
	&[\partial,A_{\lambda}]=-\lambda A_{\lambda}, \label{m24}\\
	&[\partial,B_{\lambda}]=-\lambda B_{\lambda}. \label{m25}
	\end{align}
	
	\subsection{$0\longrightarrow \mathbb{C}c_{\eta} \longrightarrow  E\longrightarrow  V_{\delta,\alpha,\beta} \longrightarrow   0$}\hfill \\

	First, we consider extensions of finite irreducible $\mathcal{R}$-modules of the form \begin{align}\label{f21}
	0\longrightarrow \mathbb{C}c_{\eta} \longrightarrow  E\longrightarrow  V_{\delta,\alpha,\beta} \longrightarrow   0.
	\end{align}Then $E$ is isomorphic to $\mathbb{C}c_{\eta}\oplus V_{\delta,\alpha,\beta}=\mathbb{C}c_{\eta}\oplus \mathbb{C}[\partial]v$ as a vector space, and the following identities hold in $E$:
	\begin{align}\label{se4}
	&\mathcal{R}_\lambda c_{\eta}=0,\quad\partial c_{\eta}=\eta c_{\eta}, \notag\\
	&A_\lambda v=\delta_1(\partial+\alpha_1\lambda+\beta_1)v+f(\lambda)c_{\eta},\quad B_\lambda v=\delta_2(\partial+\alpha_2\lambda+\beta_2)v+g(\lambda)c_{\eta}, 
	\end{align}
	where $f(\lambda), g(\lambda) \in \mathbb{C}[\lambda]$.
	
	\begin{lemma}\label{l21}
		All trivial extensions of finite irreducible $\mathcal{R}$-modules of the form (\ref{f21}) are given by (\ref{se4}), where $f(\lambda)$ and $g(\lambda)$ are the same scalar multiples of $\delta_1(\alpha_1\lambda+\eta+\beta_1)$ and $\delta_2(\alpha_2\lambda+\eta+\beta_2)$, respectively.
	\end{lemma}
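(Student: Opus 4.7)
The plan is to translate triviality of the extension into the existence of a suitable complementary splitting of $E$, and then read off the resulting constraints on $f(\lambda)$ and $g(\lambda)$. By definition, the extension (\ref{f21}) is trivial if and only if there is an $\mathcal{R}$-module isomorphism $\Psi : E \to \mathbb{C}c_\eta \oplus V_{\delta,\alpha,\beta}$ that restricts to the identity on $\mathbb{C}c_\eta$ and covers the identity on $V_{\delta,\alpha,\beta}$. Equivalently, it suffices to exhibit an element $v' \in E$ that projects to $v$ in $V_{\delta,\alpha,\beta}$ and whose $\mathbb{C}[\partial]$-span is an $\mathcal{R}$-submodule of $E$ isomorphic to $V_{\delta,\alpha,\beta}$.

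Since $\mathbb{C}c_\eta$ is one-dimensional with $\partial$ acting by the scalar $\eta$, every lift of $v$ in $E$ must have the form $v' = v + kc_\eta$ for a single $k \in \mathbb{C}$. I would then compute $A_\lambda v'$ and $B_\lambda v'$ directly from (\ref{se4}), using $\mathcal{R}_\lambda c_\eta = 0$, and impose
\begin{align*}
A_\lambda v' &= \delta_1(\partial+\alpha_1\lambda+\beta_1)v', \\
B_\lambda v' &= \delta_2(\partial+\alpha_2\lambda+\beta_2)v'.
\end{align*}
Expanding the right-hand sides and replacing $\partial c_\eta$ by $\eta c_\eta$ forces
\begin{align*}
f(\lambda) &= k\delta_1(\alpha_1\lambda + \eta + \beta_1), \\
g(\lambda) &= k\delta_2(\alpha_2\lambda + \eta + \beta_2),
\end{align*}
with a common scalar $k$, which is exactly the content of the lemma. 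Conversely, any pair $(f,g)$ of this shape is realized by the lift $v' = v + kc_\eta$, thereby producing a trivial extension.

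There is no substantive obstacle in this argument; the statement amounts to a bookkeeping exercise. The point worth emphasizing is that the freedom in lifting $v$ is parametrized by a single complex number $k$, so the proportionality constant relating $f(\lambda)$ to $\delta_1(\alpha_1\lambda+\eta+\beta_1)$ and the one relating $g(\lambda)$ to $\delta_2(\alpha_2\lambda+\eta+\beta_2)$ are necessarily equal, which is what the phrase \emph{the same scalar multiples} captures.
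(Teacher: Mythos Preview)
Your argument is correct and follows essentially the same approach as the paper. The only cosmetic difference is that the paper begins with a general element $v' = kc_\eta + l(\partial)v$ and then deduces from the comparison of $A_\lambda v'$ (or $B_\lambda v'$) computed two ways that $l(\partial)$ must be a nonzero constant, whereas you observe at the outset that a lift of $v$ under $p$ necessarily has $l(\partial)=1$, so $v' = v + kc_\eta$; both routes lead to the same pair of identities for $f(\lambda)$ and $g(\lambda)$ with the common scalar $k$.
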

	\begin{proof}
		Assume that (\ref{f21}) is a trivial extension, that is, there exists $v'=kc_\eta+l(\partial)v\in E$, where $k\in\mathbb{C}$ and $0\neq l(\partial)\in\mathbb{C}[\partial]$, such that \begin{align*}
		&A_\lambda v'=\delta_1(\partial+\alpha_1\lambda+\beta_1)v'=k\delta_1(\eta+\alpha_1\lambda+\beta_1)c_\eta+\delta_1l(\partial)(\partial+\alpha_1\lambda+\beta_1)v,\\
		&B_\lambda v'=\delta_2(\partial+\alpha_2\lambda+\beta_2)v'=k\delta_2(\eta+\alpha_2\lambda+\beta_2)c_\eta+\delta_2l(\partial)(\partial+\alpha_2\lambda+\beta_2)v.
		\end{align*}
		
		On the other hand, it follows from (\ref{se4}) that
		\begin{align*}
		&A_\lambda v'=f(\lambda)l(\eta+\lambda)c_\eta+\delta_1l(\partial+\lambda)(\partial+\alpha_1\lambda+\beta_1)v,\\
		&B_\lambda v'=g(\lambda)l(\eta+\lambda)c_\eta+\delta_2l(\partial+\lambda)(\partial+\alpha_2\lambda+\beta_2)v.
		\end{align*}
		We can obtain that $l(\partial)$ is a nonzero constant by comparing both expressions for $A_\lambda v'$ and $B_\lambda v'$. Thus $f(\lambda)$ and $g(\lambda)$ are the same scalar multiple of $\delta_1(\alpha_1\lambda+\eta+\beta_1)$ and $\delta_2(\alpha_2\lambda+\eta+\beta_2)$, respectively.
		
		Conversely, if $f(\lambda)=k\delta_1(\alpha_1\lambda+\eta+\beta_1)$ and $g(\lambda)=k\delta_2(\alpha_2\lambda+\eta+\beta_2)$ for some $k\in\mathbb{C}$, setting $v'=kc_\eta +v$ we can deduce that (\ref{f21}) is a trivial extension.
	\end{proof}

\begin{theorem}\label{t21}
	Let $\mathcal{R}$ be a direct sum of two Virasoro conformal algebras. Then nontrivial extensions of finite irreducible conformal modules of the form (\ref{f21}) exist only when $(\delta_1,\delta_2)=(1,0),\alpha_1\in\{1,2\},\beta_1+\eta=0$ or $(\delta_1,\delta_2)=(0,1),\alpha_2\in\{1,2\},\beta_2+\eta=0$. In these cases, there exists a unique (up to a scalar) nontrivial extension, i.e. $dim(V_{\delta,\alpha,\beta},Ext(\mathbb{C}c_\eta))=1$. Moreover, they are given (up to equivalence) by (\ref{se4}). The values of $\delta_i,\alpha_i,\beta_i,i=1,2$ and $\eta$, along with the corresponding polynomials $f(\lambda)$ and $g(\lambda)$ giving rise to nontrivial extensions, are listed as follows: \begin{enumerate}[(i)]
		\item If $\delta_1=1,\alpha_1\in\{1,2\},\beta_1+\eta=0$, then $\delta_2=0,g(\lambda)=0,0\neq\alpha_2,\beta_2\in\mathbb{C}$ and \begin{align*}
		f(\lambda)=\begin{cases}
		s_1\lambda^2,&\alpha_1=1,\\
		s_2\lambda^3,&\alpha_1=2,
		\end{cases}
		\end{align*}
		with nonzero constants $s_1,s_2$.
		\item If $\delta_2=1,\alpha_2\in\{1,2\},\beta_2+\eta=0$, then $\delta_1=0,f(\lambda)=0,0\neq\alpha_1,\beta_1\in\mathbb{C}$ and \begin{align*}
		g(\lambda)=\begin{cases}
		t_1\lambda^2,&\alpha_2=1,\\
		t_2\lambda^3,&\alpha_2=2,
		\end{cases}
		\end{align*}
		with nonzero constants $t_1,t_2$.
	\end{enumerate}

\end{theorem}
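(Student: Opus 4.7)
The plan is to translate the condition that $E$ carries a well-defined $\mathcal{R}$-module structure into polynomial identities on $f(\lambda)$ and $g(\lambda)$ via the module axioms (\ref{m21})--(\ref{m25}). The sesquilinearity axioms (\ref{m24}), (\ref{m25}) hold automatically for any choice of $f,g$, so the whole content lies in the three bracket identities (\ref{m21})--(\ref{m23}) applied to $v$, since on $c_\eta$ they hold trivially.

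Because $\delta_1^2+\delta_2^2=1$, exactly one of $\delta_1,\delta_2$ equals $1$, and I would treat the two cases in parallel. First assume $\delta_1=1, \delta_2=0$. Then computing both sides of $[B_\lambda,B_\mu]v=(\lambda-\mu)B_{\lambda+\mu}v$ from (\ref{se4}) yields $(\lambda-\mu)g(\lambda+\mu)c_\eta=0$, so $g(\lambda)=0$; the identity $[A_\lambda,B_\mu]v=0$ is then automatic. The remaining content is the $c_\eta$-component of $[A_\lambda,A_\mu]v=(\lambda-\mu)A_{\lambda+\mu}v$, which produces the Virasoro $1$-cocycle equation
\begin{align*}
(\eta+\lambda+\alpha_1\mu+\beta_1)f(\lambda)-(\eta+\mu+\alpha_1\lambda+\beta_1)f(\mu)=(\lambda-\mu)f(\lambda+\mu).
\end{align*}

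To solve this, I would first substitute $\mu=0$ to get $(\eta+\beta_1)f(\lambda)=(\eta+\alpha_1\lambda+\beta_1)f(0)$. If $\eta+\beta_1\neq 0$, then $f(\lambda)$ is a scalar multiple of $\alpha_1\lambda+\eta+\beta_1$, which by Lemma \ref{l21} is a coboundary, giving only a trivial extension; so a nontrivial extension requires $\eta+\beta_1=0$, which combined with $\alpha_1\neq 0$ forces $f(0)=0$. With $\eta+\beta_1=0$, writing $f(\lambda)=\sum_{k\geq 1}a_k\lambda^k$ and comparing monomial coefficients on both sides of the reduced equation $(\lambda+\alpha_1\mu)f(\lambda)-(\mu+\alpha_1\lambda)f(\mu)=(\lambda-\mu)f(\lambda+\mu)$ produces constraints of the form $(\alpha_1-1)a_2=0$, $(\alpha_1-2)a_3=0$, and $a_k=0$ for all $k\geq 4$, while the $k=1$ term is itself a coboundary by Lemma \ref{l21}. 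This yields the nontrivial classes listed in (i), and the symmetric argument with $A$ and $B$ interchanged yields (ii).

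The main obstacle I expect is the coefficient-matching step for $k\geq 2$: one must systematically verify that the reduced equation forces $a_k=0$ unless $\alpha_1$ takes a specific integer value, and in particular rule out any nontrivial solutions for $k\geq 4$. This is precisely the classical Virasoro $1$-cocycle analysis carried out in \cite{ckw1}, so I would import that argument and adapt it to the present parameters rather than redo the combinatorics from scratch.
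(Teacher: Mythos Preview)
Your proposal is correct and follows essentially the same route as the paper: reduce to $g=0$ and then invoke the Virasoro cocycle classification from \cite{ckw1}. The only cosmetic difference is that you extract $g=0$ from the $[B_\lambda,B_\mu]$ relation (\ref{m23}), whereas the paper uses the mixed relation $[A_\lambda,B_\mu]=0$ (\ref{m22}); both yield $g=0$ immediately when $(\delta_1,\delta_2)=(1,0)$, and your subsequent sketch of the degree-by-degree analysis of the Virasoro cocycle equation is accurate and matches what is cited from \cite{ckw1}.
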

\begin{proof}
	Applying both sides of (\ref{m22}) to $v$ and comparing the corresponding efficients, we obtain \begin{align}
	&\delta_2(\eta+\lambda+\alpha_2\mu+\beta_2)f(\lambda)-\delta_1(\eta+\mu+\alpha_1\lambda+\beta_1)g(\mu)=0,\label{te42}
	\end{align}
	
	If $(\delta_1,\delta_2)=(1,0)$, (\ref{te42}) implies $g(\mu)=0$ and it reduces to the case of Virasoro conformal algebra. We can deduce the result by Proposition 2.1 in \cite{ckw1}. A similar discussion can be made with $(\delta_1,\delta_2)=(0,1)$. 
\end{proof}

\subsection{$0\longrightarrow V_{\delta,\alpha,\beta} \longrightarrow  E\longrightarrow\mathbb{C}c_{\eta}   \longrightarrow   0$}\hfill \\

Next, we consider extensions of finite irreducible $\mathcal{R}$-modules of the form \begin{align}\label{f22}
0\longrightarrow V_{\delta,\alpha,\beta} \longrightarrow  E\longrightarrow\mathbb{C}c_{\eta}   \longrightarrow   0.
\end{align}Then $E$ is isomorphic to $V_{\delta,\alpha,\beta}\oplus\mathbb{C}c_{\eta} =\mathbb{C}[\partial]v\oplus\mathbb{C}c_{\eta} $ as a vector space, and the following identities hold in $E$:
\begin{align}\label{se5}
&A_\lambda v=\delta_1(\partial+\alpha_1\lambda+\beta_1)v,\quad B_\lambda v=\delta_2(\partial+\alpha_2\lambda+\beta_2)v, \notag\\
&A_\lambda c_{\eta}=f(\partial,\lambda)v,\quad B_\lambda c_{\eta}=g(\partial,\lambda)v,\quad\partial c_{\eta}=\eta c_{\eta}+h(\partial)v, 
\end{align}
where $f(\partial,\lambda), g(\partial,\lambda), h(\partial)\in \mathbb{C}[\partial,\lambda]$.

\begin{lemma}\label{l22}
	All trivial extensions of finite irreducible $\mathcal{R}$-modules of the form (\ref{f22}) are given by (\ref{se5}), and $f(\partial,\lambda)=\delta_1\varphi(\partial+\lambda)(\partial+\alpha_1\lambda+\beta_1),g(\partial,\lambda)=\delta_2\varphi(\partial+\lambda)(\partial+\alpha_2\lambda+\beta_2)$ and $h(\partial)=(\partial-\eta)\varphi(\partial)$, where $\varphi$ is a polynomial.
\end{lemma}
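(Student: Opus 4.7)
The plan is to mimic the structure of the proof of Lemma \ref{l21}: an extension of the form (\ref{f22}) is trivial precisely when we can find a new generator $c_\eta'$ of a complementary one-dimensional subspace of $E$ that projects to $c_\eta$ in the quotient and satisfies $\partial c_\eta' = \eta c_\eta'$ together with $\mathcal{R}_\lambda c_\eta' = 0$. Since the equivalence of extensions is required to commute with both the inclusion $V_{\delta,\alpha,\beta} \hookrightarrow E$ and the projection $E \twoheadrightarrow \mathbb{C}c_\eta$, any such new generator is necessarily of the form
\begin{equation*}
c_\eta' = c_\eta - \varphi(\partial)v
\end{equation*}
for some polynomial $\varphi(\partial) \in \mathbb{C}[\partial]$.

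With this ansatz fixed, I would carry out three routine calculations using (\ref{se5}) and the conformal sesquilinearity axiom $A_\lambda(\varphi(\partial)v) = \varphi(\partial+\lambda) A_\lambda v$ (and similarly for $B_\lambda$). First, computing
\begin{equation*}
\partial c_\eta' - \eta c_\eta' = h(\partial)v - (\partial - \eta)\varphi(\partial)v
\end{equation*}
and equating to zero forces $h(\partial) = (\partial-\eta)\varphi(\partial)$. Second, expanding
\begin{equation*}
A_\lambda c_\eta' = f(\partial,\lambda)v - \delta_1 \varphi(\partial+\lambda)(\partial + \alpha_1\lambda + \beta_1)v
\end{equation*}
and setting it to zero forces $f(\partial,\lambda) = \delta_1 \varphi(\partial+\lambda)(\partial+\alpha_1\lambda+\beta_1)$. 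Third, an identical computation with $B_\lambda$ yields $g(\partial,\lambda) = \delta_2 \varphi(\partial+\lambda)(\partial+\alpha_2\lambda+\beta_2)$. This establishes the forward direction.

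For the converse, given $\varphi$ and the three displayed formulas, I set $c_\eta' = c_\eta - \varphi(\partial)v$ and verify by direct substitution that $\mathcal{R}_\lambda c_\eta' = 0$ and $\partial c_\eta' = \eta c_\eta'$, so that $\mathbb{C}c_\eta' \subseteq E$ is a trivial $\mathcal{R}$-submodule isomorphic to $\mathbb{C}c_\eta$; consequently $E = V_{\delta,\alpha,\beta} \oplus \mathbb{C}c_\eta'$ as $\mathcal{R}$-modules, and the map $\Psi : V_{\delta,\alpha,\beta}\oplus \mathbb{C}c_\eta \to E$, $v \mapsto v$, $c_\eta \mapsto c_\eta'$, provides the equivalence with the trivial extension.

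There is no real obstacle here: the argument is purely computational, and the only mildly nontrivial point is the observation that the new basis element must project to $c_\eta$ modulo $V_{\delta,\alpha,\beta}$, which forces the ansatz $c_\eta' = c_\eta - \varphi(\partial)v$. Everything else is a bookkeeping exercise in conformal sesquilinearity.
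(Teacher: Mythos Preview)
Your proposal is correct and follows essentially the same approach as the paper's own proof: the paper likewise posits a new generator $c'_\eta = kc_\eta + l(\partial)v$ with $k\neq 0$, computes $A_\lambda c'_\eta$, $B_\lambda c'_\eta$, and $\partial c'_\eta$ from (\ref{se5}), and compares with the required relations to extract $f$, $g$, and $h$; the only cosmetic difference is that you normalize $k=1$ from the outset, which is harmless since $k\neq 0$.
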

\begin{proof}
	Assume that (\ref{f22}) is a trivial extension, that is, there exists $c'_\eta=kc_\eta+l(\partial)v\in E$, where $0\neq k\in\mathbb{C}$ and $ l(\partial)\in\mathbb{C}[\partial]$, such that $A_\lambda c'_\eta=B_\lambda c'_\eta=0$ and $\partial c'_{\eta}=\eta c'_{\eta}$.
	
	On the other hand, it follows from (\ref{se5}) that
	\begin{align*}
	&A_\lambda c'_\eta=(kf(\partial,\lambda)+\delta_1l(\partial+\lambda)(\partial+\alpha_1\lambda+\beta_1))v,\\
	&B_\lambda c'_\eta=(kg(\partial,\lambda)+\delta_2l(\partial+\lambda)(\partial+\alpha_2\lambda+\beta_2))v,\\
	&\partial c'_{\eta}=k\eta c_\eta+(kh(\partial)+\partial l(\partial))v.
	\end{align*}
	We can obtain the result by comparing both expressions for  $A_\lambda c'_\eta,B_\lambda c'_\eta$ and $\partial c'_\eta$.
	
	Conversely, if $f(\partial,\lambda)=\delta_1\varphi(\partial+\lambda)(\partial+\alpha_1\lambda+\beta_1),g(\partial,\lambda)=\delta_2\varphi(\partial+\lambda)(\partial+\alpha_2\lambda+\beta_2)$ and $h(\partial)=(\partial-\eta)\varphi(\partial)$ for some polynomial $\varphi$, setting $c'_{\eta}=c_\eta -\varphi(\partial)v$, we can deduce that (\ref{f22}) is a trivial extension.
\end{proof}

\begin{theorem}\label{t22}
	Let $\mathcal{R}$ be a direct sum of two Virasoro conformal algebras. Then nontrivial extensions of finite irreducible conformal modules of the form (\ref{f22}) exist only when $\delta_1=1,\alpha_1=1,\beta_1+\eta=0$ or $\delta_2=1,\alpha_2=1,\beta_2+\eta=0$. In these cases, there exists a unique (up to a scalar) nontrivial extension, i.e. $dim(Ext(\mathbb{C}c_\eta,V_{\delta,\alpha,\beta}))=1$. Moreover, they are given (up to equivalence) by (\ref{se5}). The values of $\delta_i,\alpha_i,\beta_i,i=1,2$ and $\eta$ along with the corresponding polynomials $f(\partial,\lambda),g(\partial,\lambda)$ and $h(\partial)$ giving rise to nontrivial extensions, are listed as follows: \begin{enumerate}[(i)]
		\item If $\delta_1=1,\alpha_1=1,\beta_1+\eta=0$, then $\delta_2=0,g(\partial,\lambda)=0,\alpha_2,\beta_2\in\mathbb{C}$ and $f(\partial,\lambda)=h(\partial)=s$ with nonzero constant $s$.
		\item If $\delta_2=1,\alpha_2=1,\beta_2+\eta=0$, then $\delta_1=0,f(\partial,\lambda)=0,\alpha_1,\beta_1\in\mathbb{C}$ and $g(\partial,\lambda)=h(\partial)=t$ with nonzero constant $t$.
	\end{enumerate}
\end{theorem}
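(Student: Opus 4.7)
The plan is to translate the module axioms (\ref{m21})--(\ref{m25}) applied to $c_\eta$ into polynomial constraints on $f(\partial,\lambda)$, $g(\partial,\lambda)$ and $h(\partial)$. From $[\partial,A_\lambda]c_\eta=-\lambda A_\lambda c_\eta$, using the sesquilinearity identity $A_\lambda(h(\partial)v)=h(\partial+\lambda)A_\lambda v$, I obtain
$$(\partial+\lambda-\eta)f(\partial,\lambda)=\delta_1 h(\partial+\lambda)(\partial+\alpha_1\lambda+\beta_1),$$
together with the analogous identity governing $g,\delta_2,\alpha_2,\beta_2$ arising from $[\partial,B_\lambda]$. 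Applying the cross-bracket $[A_\lambda,B_\mu]=0$ to $c_\eta$ yields
$$\delta_1 g(\partial+\lambda,\mu)(\partial+\alpha_1\lambda+\beta_1)=\delta_2 f(\partial+\mu,\lambda)(\partial+\alpha_2\mu+\beta_2),$$
while (\ref{m21}) and (\ref{m23}) produce Virasoro-type cocycle equations for $f$ and $g$ weighted by $\delta_1$ and $\delta_2$ respectively.

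Since $\delta_1,\delta_2\in\{0,1\}$ with $\delta_1^2+\delta_2^2=1$, there are precisely two symmetric cases $(\delta_1,\delta_2)=(1,0)$ and $(0,1)$, exchanged by the swap $A\leftrightarrow B$. In case $(1,0)$ the cross-relation collapses to $g(\partial+\lambda,\mu)(\partial+\alpha_1\lambda+\beta_1)=0$, and since $\alpha_1\ne 0$ makes the second factor a nonzero polynomial, I conclude $g\equiv 0$ (equivalently, the Virasoro-type equation for $g$ with $\delta_2=0$ reduces to $(\lambda-\mu)g(\partial,\lambda+\mu)=0$). Then $\mathbb{C}[\partial]B$ acts trivially on all of $E$, and the equations remaining on $(f,h)$ coincide precisely with the cocycle and coboundary conditions of the pure Virasoro extension problem $0\to V_{\alpha_1,\beta_1}\to E\to\mathbb{C}c_\eta\to 0$ treated in \cite{ckw1}. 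The classification there forces $\alpha_1=1$, $\beta_1+\eta=0$, with $f(\partial,\lambda)$ reducible modulo coboundary to a nonzero constant $s$; substituting into the $[\partial,A_\lambda]$-equation then yields $h(\partial)=s$.

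The case $(\delta_1,\delta_2)=(0,1)$ follows by the symmetric argument, with $(f,s,\alpha_1,\beta_1)$ replaced by $(g,t,\alpha_2,\beta_2)$. Nontriviality of the resulting cocycles is then checked against Lemma \ref{l22}: writing the nonzero constant $f=s$ in the coboundary form $\varphi(\partial+\lambda)(\partial+\lambda-\eta)$ is impossible since the right-hand side has positive total degree unless $\varphi=0$.

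The main obstacle I anticipate is articulating cleanly that, once $g\equiv 0$ (respectively $f\equiv 0$) is forced by the rank-two structure, the residual constraints on $(f,h)$ (resp.\ $(g,h)$) together with the coboundary description from Lemma \ref{l22} match exactly those of the Virasoro extension classification in \cite{ckw1}, so that the dimension count and the explicit polynomial form are inherited without modification. Once that reduction is made explicit, the remainder is direct polynomial verification.
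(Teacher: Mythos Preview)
Your proposal is correct and follows essentially the same strategy as the paper: force $g\equiv 0$ (respectively $f\equiv 0$) in the case $(\delta_1,\delta_2)=(1,0)$ (respectively $(0,1)$), then invoke the Virasoro classification from \cite{ckw1} for the remaining data $(f,h)$. The only difference is cosmetic: the paper obtains $g=0$ directly from the single equation $(\partial+\lambda-\eta)g(\partial,\lambda)=\delta_2 h(\partial+\lambda)(\partial+\alpha_2\lambda+\beta_2)$ (coming from $[\partial,B_\lambda]c_\eta$) with $\delta_2=0$, rather than via the cross-bracket or the $[B_\lambda,B_\mu]$ relation you use, so it never needs to write down the equations from (\ref{m21})--(\ref{m23}) at all.
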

\begin{proof}
	Applying both sides of (\ref{m24}) and (\ref{m25})  to $c_\eta$ and comparing the corresponding efficients gives the following equations \begin{align}
	&(\partial+\lambda-\eta)f(\partial,\lambda)=\delta_1h(\partial+\lambda)(\partial+\alpha_1\lambda+\beta_1),\label{te51}\\
	&(\partial+\lambda-\eta)g(\partial,\lambda)=\delta_2h(\partial+\lambda)(\partial+\alpha_2\lambda+\beta_2).\label{te52}
	\end{align}

We only need to consider the case that $(\delta_1,\delta_2)=(1,0)$. Then $g(\partial,\lambda)=0$ by (\ref{te52}) and the result can be deduced by Proposition 2.2 in \cite{ckw1}.
\end{proof}
\subsection{$0\longrightarrow V_{\delta,\alpha,\beta} \longrightarrow  E\longrightarrow  V_{\bar{\delta},\bar{\alpha},\bar{\beta}} \longrightarrow   0$}\hfill\\

Finally, we consider extensions of finite irreducible $\mathcal{R}$-modules of the form \begin{align}\label{f23}
0\longrightarrow V_{\delta,\alpha,\beta} \longrightarrow  E\longrightarrow  V_{\bar{\delta},\bar{\alpha},\bar{\beta}} \longrightarrow   0.
\end{align}Then $E$ is isomorphic to $V_{\delta,\alpha,\beta}\oplus V_{\bar{\delta},\bar{\alpha},\bar{\beta}} =\mathbb{C}[\partial]v\oplus\mathbb{C}[\partial]\bar{v} $ as a vector space, and the following identities hold in $E$:
\begin{align}\label{se6}
&A_\lambda v=\delta_1(\partial+\alpha_1\lambda+\beta_1)v,\quad B_\lambda v=\delta_2(\partial+\alpha_2\lambda+\beta_2)v, \notag\\
&A_\lambda \bar{v}=\bar{\delta}_1(\partial+\bar{\alpha}_1\lambda+\bar{\beta}_1)\bar{v}+f(\partial,\lambda)v,\quad B_\lambda \bar{v}=\bar{\delta}_2(\partial+\bar{\alpha}_2\lambda+\bar{\beta}_2)\bar{v}+g(\partial,\lambda)v, 
\end{align}
where $f(\partial,\lambda), g(\partial,\lambda)\in \mathbb{C}[\partial,\lambda]$.
\begin{lemma}\label{l23}
	All trivial extensions of finite irreducible $\mathcal{R}$-modules of the form (\ref{f23}) are given by (\ref{se6}), and $f(\partial,\lambda)=\delta_1\varphi(\partial+\lambda)(\partial+\alpha_1\lambda+\beta_1)-\bar{\delta}_1\varphi(\partial)(\partial+\bar{\alpha}_1\lambda+\bar{\beta}_1)$ and $g(\partial,\lambda)=\delta_2\varphi(\partial+\lambda)(\partial+\alpha_2\lambda+\beta_2)-\bar{\delta}_2\varphi(\partial)(\partial+\bar{\alpha}_2\lambda+\bar{\beta}_2)$ for some polynomial $\varphi$.
\end{lemma}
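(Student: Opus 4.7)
The plan is to follow the template of Lemmas~\ref{l21} and~\ref{l22}: characterise triviality of the extension by the existence of a suitable complementary generator $\bar{v}'\in E$, and then translate the requirement that $\mathbb{C}[\partial]\bar{v}'$ carries the $V_{\bar{\delta},\bar{\alpha},\bar{\beta}}$-structure into polynomial identities for $f$ and $g$ by comparing the coefficients of $v$.

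For the necessity direction, I would assume that (\ref{f23}) is trivial, so that there exists an equivalence $\Psi:V_{\delta,\alpha,\beta}\oplus V_{\bar{\delta},\bar{\alpha},\bar{\beta}}\to E$ which restricts to the identity on $V_{\delta,\alpha,\beta}$ and induces the identity on the quotient. Commutativity with the projection $p$ in the defining diagram forces $\Psi(\bar{v})=\bar{v}-\varphi(\partial)v$ for some $\varphi(\partial)\in\mathbb{C}[\partial]$; crucially, the $\bar{v}$-coefficient is pinned to exactly $1$ (not a general nonzero constant, in contrast to the proof of Lemma~\ref{l21}), because of the \emph{equivalence} condition rather than mere isomorphism of the middle term. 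Setting $\bar{v}':=\Psi(\bar{v})$ and using that $\Psi$ is a $\mathcal{R}$-module map gives
\[A_\lambda\bar{v}'=\bar{\delta}_1(\partial+\bar{\alpha}_1\lambda+\bar{\beta}_1)\bar{v}',\qquad B_\lambda\bar{v}'=\bar{\delta}_2(\partial+\bar{\alpha}_2\lambda+\bar{\beta}_2)\bar{v}'.\]
Expanding both sides via (\ref{se6}) and conformal sesquilinearity and equating the coefficients of $v$ yields exactly the stated formulas for $f(\partial,\lambda)$ and $g(\partial,\lambda)$.

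For the sufficiency direction, given $f,g$ of the asserted form with some $\varphi\in\mathbb{C}[\partial]$, I would set $\bar{v}':=\bar{v}-\varphi(\partial)v\in E$ and verify by direct substitution that $A_\lambda\bar{v}'=\bar{\delta}_1(\partial+\bar{\alpha}_1\lambda+\bar{\beta}_1)\bar{v}'$ together with its $B_\lambda$-analogue, so that the $\mathbb{C}[\partial]$-linear map $\bar{v}\mapsto\bar{v}'$ provides a section of $p$ as $\mathcal{R}$-modules and exhibits the equivalence with the direct-sum extension. I do not anticipate any genuine obstacle: the only point requiring a little attention is the leading-coefficient argument that the $\bar{v}$-component of $\Psi(\bar{v})$ is forced to equal $1$, after which both directions reduce to routine polynomial matching strictly parallel to the proofs of Lemmas~\ref{l21} and~\ref{l22}.
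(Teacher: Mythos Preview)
Your proposal is correct and follows essentially the same route as the paper's proof: write a lift $\bar{v}'=\bar{v}-\varphi(\partial)v$, compute $A_\lambda\bar{v}'$ and $B_\lambda\bar{v}'$ two ways via (\ref{se6}), and match the $v$-coefficients. The only cosmetic difference is that the paper begins with a general coefficient $l(\partial)$ in front of $\bar{v}$ and then deduces from the $\bar{v}$-component that $l$ is a nonzero constant, whereas you invoke the equivalence condition $p'\circ\Psi=p$ up front to pin this coefficient to $1$; either way one arrives at the same polynomial identities and the same converse.
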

\begin{proof}
	Assume that (\ref{f23}) is a trivial extension, that is, there exists $\bar{v}'=k(\partial)v+l(\partial)\bar{v}\in E$, where $k(\partial),l(\partial)\in\mathbb{C}$ and $ l(\partial)\neq0$, such that \begin{align*}
	&A_\lambda \bar{v}'=\bar{\delta}_1(\partial+\bar{\alpha}_1\lambda+\bar{\beta}_1)\bar{v}'=\bar{\delta}_1k(\partial)(\partial+\bar{\alpha}_1\lambda+\bar{\beta}_1)v+\bar{\delta}_1l(\partial)(\partial+\bar{\alpha}_1\lambda+\bar{\beta}_1)\bar{v},\\
	&B_\lambda \bar{v}'=\bar{\delta}_2(\partial+\bar{\alpha}_2\lambda+\bar{\beta}_2)\bar{v}'=\bar{\delta}_2k(\partial)(\partial+\bar{\alpha}_2\lambda+\bar{\beta}_2)v+\bar{\delta}_2l(\partial)(\partial+\bar{\alpha}_2\lambda+\bar{\beta}_2)\bar{v}.
	\end{align*}
	
	On the other hand, it follows from (\ref{se6}) that
	\begin{align*}
	&A_\lambda \bar{v}'=(\delta_1k(\partial+\lambda)(\partial+\alpha_1\lambda+\beta_1)+l(\partial+\lambda)f(\partial,\lambda))v+\bar{\delta}_1l(\partial+\lambda)(\partial+\bar{\alpha}_1\lambda+\bar{\beta}_1)\bar{v},\\
	&B_\lambda \bar{v}'=(\delta_2k(\partial+\lambda)(\partial+\alpha_2\lambda+\beta_2)+l(\partial+\lambda)g(\partial,\lambda))v+\bar{\delta}_2l(\partial+\lambda)(\partial+\bar{\alpha}_2\lambda+\bar{\beta}_2)\bar{v}.
	\end{align*}
	Comparing both expressions for $A_\lambda \bar{v}'$ and $B_\lambda \bar{v}'$, we can obtain that $l(\partial)$ is a nonzero constant. Then we can give the expressions of $f(\partial,\lambda)$ and $g(\partial,\lambda)$.
	
	Conversely, if $f(\partial,\lambda)=\delta_1\varphi(\partial+\lambda)(\partial+\alpha_1\lambda+\beta_1)-\bar{\delta}_1\varphi(\partial)(\partial+\bar{\alpha}_1\lambda+\bar{\beta}_1)$ and $g(\partial,\lambda)=\delta_2\varphi(\partial+\lambda)(\partial+\alpha_2\lambda+\beta_2)-\bar{\delta}_2\varphi(\partial)(\partial+\bar{\alpha}_2\lambda+\bar{\beta}_2)$ for some polynomial $\varphi$, setting $\bar{v}'=-\varphi(\partial)v+\bar{v}$ we can deduce that (\ref{f23}) is a trivial extension.
\end{proof}

Before classifying all nontrivial extensions of the form (\ref{f23}), we give the following lemma for later use. 

\begin{lemma}\label{r3}
		The equation
	\begin{align}\label{r3e}
	c(\partial+\lambda,\mu)(\partial+a\lambda+b)-c(\partial,\mu)(\partial+\mu+\bar{a}\lambda+\bar{b})=0
	\end{align} for unknown polynomials $c(\partial,\lambda)\in\mathbb{C}[\partial,\lambda]$ with $a,\bar{a},b,\bar{b}\in\mathbb{C}$ has only zero solution.
\end{lemma}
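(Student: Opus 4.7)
The plan is to proceed by contradiction via a leading-coefficient analysis in the variable $\partial$. Assume $c(\partial,\mu)$ is a nonzero solution of \eqref{r3e}, and view it as a polynomial in $\partial$ with coefficients in $\mathbb{C}[\mu]$, say $c(\partial,\mu)=\sum_{i=0}^{n}c_i(\mu)\partial^i$ of $\partial$-degree $n\geq 0$ with nonzero leading coefficient $c_n(\mu)\in\mathbb{C}[\mu]$. Both sides of \eqref{r3e} then have $\partial$-degree $n+1$, and the top $\partial^{n+1}$-terms already agree (each equals $c_n(\mu)$), so the first informative information sits in the coefficient of $\partial^n$.

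The key step is to extract and compare that coefficient. On the left side, contributions to $\partial^n$ come from three sources: $c_n(\mu)(\partial+\lambda)^n\cdot(a\lambda+b)$, the $\binom{n}{n-1}\lambda\,\partial^{n-1}$-piece of $(\partial+\lambda)^n$ multiplied by the factor $\partial$, and $c_{n-1}(\mu)(\partial+\lambda)^{n-1}\cdot\partial$. On the right side, the $\partial^n$-coefficient is $c_n(\mu)(\mu+\bar a\lambda+\bar b)+c_{n-1}(\mu)$. Cancelling the common $c_{n-1}(\mu)$ and equating yields the identity
\[
c_n(\mu)\bigl((n+a-\bar a)\lambda+(b-\bar b)-\mu\bigr)=0\quad\text{in }\mathbb{C}[\lambda,\mu].
\]

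Since $\mathbb{C}[\lambda,\mu]$ is an integral domain and the bracketed factor contains the monomial $-\mu$ with nonzero coefficient—something no choice of the scalars $a,\bar a,b,\bar b,n$ can cancel—this second factor is a nonzero polynomial in $\mathbb{C}[\lambda,\mu]$. Therefore $c_n(\mu)=0$, contradicting the choice of $c_n$ as the leading coefficient. Hence $c\equiv 0$, as claimed.

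I do not expect any real obstacle here: the argument is uniform in $a,\bar a,b,\bar b$, and the only point requiring any care is the edge case $n=0$, in which one simply interprets $c_{-1}(\mu)$ as zero, the same identity persists, and the same contradiction forces $c_0(\mu)=0$. The essential mechanism is that comparing $\partial^n$-coefficients produces a linear polynomial in $\mu$ as an extra factor, which the left-hand side cannot accommodate.
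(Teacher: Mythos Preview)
Your argument is correct: expanding both sides in powers of $\partial$ and comparing the $\partial^n$-coefficients indeed yields
\[
c_n(\mu)\bigl((n+a-\bar a)\lambda+(b-\bar b)-\mu\bigr)=0,
\]
and the $-\mu$ term forces $c_n(\mu)=0$, a contradiction. The edge case $n=0$ is handled exactly as you say.

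However, the paper's proof is considerably shorter and you may want to know it: simply set $\lambda=0$ in \eqref{r3e}. The first summand becomes $c(\partial,\mu)(\partial+b)$ and the second $c(\partial,\mu)(\partial+\mu+\bar b)$, so the equation collapses to $c(\partial,\mu)(b-\bar b-\mu)=0$, and since $b-\bar b-\mu\neq 0$ in $\mathbb{C}[\mu]$ one concludes $c=0$ immediately. Your leading-coefficient analysis essentially recovers this same linear factor $(b-\bar b)-\mu$ (plus an extra $\lambda$-term that does no harm), but only after an expansion that the direct substitution avoids. Both arguments exploit the same underlying observation---that the $\mu$ appearing only in the second bracket cannot be matched---so there is no real conceptual difference, only a difference in economy.
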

\begin{proof}
  Putting $\lambda=0$ in (\ref{r3e}), we get $c(\partial,\mu)(b-\mu-\bar{b})=0$. So $c=0$.
\end{proof}
\begin{theorem}\label{t23}
	Let $\mathcal{R}$ be a direct sum of two Virasoro conformal algebras. Then nontrivial extensions of finite irreducible conformal modules of the form (\ref{f23}) only exist in the following cases. Moreover, they are given (up to equivalence) by (\ref{se6}). The value of $\delta_i,\bar{\delta}_i,\alpha_i,\bar{\alpha}_i,\beta_i,\bar{\beta}_i,i=1,2$ and the corresponding polynomials $f(\partial,\lambda)$ and $g(\partial,\lambda)$ giving rise to nontrivial extensions, are listed as follows:
	\begin{enumerate}
		\item In the case that $\delta_1=\bar{\delta}_1=1,\delta_2=\bar{\delta}_2=0$, $\alpha_2,\beta_2,\bar{\alpha}_2,\bar{\beta}_2\in\mathbb{C},g(\partial,\lambda)=0,\beta_1=\bar{\beta}_1,\bar{\alpha}_1-\alpha_1\in\{0,1,2,3,4,5,6\},\alpha_1,\bar{\alpha}_1\neq0$ and	\begin{enumerate}[(i)]
			\item $\bar{\alpha}_1=\alpha_1$, $f(\partial,\lambda)=s_0+s_1\lambda$, where $(s_0,s_1)\neq (0,0)$.
			\item $\bar{\alpha}_1-\alpha_1=2$, $f(\partial,\lambda)=s\lambda^2(2(\partial+\beta_1)+\lambda)$, where $s\neq0$.
			\item $\bar{\alpha}_1-\alpha_1=3$, $f(\partial,\lambda)=s(\partial+\beta_1)\lambda^2(\partial+\beta_1+\lambda)$, where $s\neq0$.
			\item $\bar{\alpha}_1-\alpha_1=4$, $f(\partial,\lambda)=s\lambda^2(4(\partial+\beta_1)^3+6(\partial+\beta_1)^2\lambda-(\partial+\beta_1)\lambda^2+\alpha_1\lambda^3)$, where $s\neq0$.
			\item $\bar{\alpha}_1=1$ and $\alpha_1=-4$, $f(\partial,\lambda)=s((\partial+\beta_1)^4\lambda^2-10(\partial+\beta_1)^2\lambda^4-17(\partial+\beta_1)\lambda^5-8\lambda^6)$, where $s\neq0$.
			
			\item $\bar{\alpha}_1=\frac{7}{2}\pm\frac{\sqrt{19}}{2}$ and $\alpha_1=-\frac{5}{2}\pm\frac{\sqrt{19}}{2}$. $f(\partial,\lambda)=s((\partial+\beta_1)^4\lambda^3-(2\alpha_1+3)(\partial+\beta_1)^3\lambda^4-3\alpha_1(\partial+\beta_1)^2\lambda^5-(3\alpha_1+1)(\partial+\beta_1)\lambda^6-(\alpha_1+\frac{9}{28})\lambda^7)$, where $s\neq0$.
		\end{enumerate}
	The value of $dim(Ext(V_{\bar{\delta},\bar{\alpha},\bar{\beta}},V_{\delta,\alpha,\beta}))$ is 2 in subcase (i), and 1 in subcases (ii)-(vi).
	
	\item In the case that $\delta_1=\bar{\delta}_1=0,\delta_2=\bar{\delta}_2=1$,  $\alpha_1,\beta_1,\bar{\alpha}_1,\bar{\beta}_1\in\mathbb{C},f(\partial,\lambda)=0,\beta_2=\bar{\beta}_2,\bar{\alpha}_2-\alpha_2\in\{0,1,2,3,4,5,6\},\alpha_2,\bar{\alpha}_2\neq0,$ and	\begin{enumerate}[(i)]
		\item $\bar{\alpha}_2=\alpha_2$, $g(\partial,\lambda)=t_0+t_1\lambda$, where $(t_0,t_1)\neq (0,0)$.
		\item $\bar{\alpha}_2-\alpha_2=2$, $g(\partial,\lambda)=t\lambda^2(2(\partial+\beta_2)+\lambda)$, where $t\neq0$.
		\item $\bar{\alpha}_2-\alpha_2=3$, $g(\partial,\lambda)=t(\partial+\beta_2)\lambda^2(\partial+\beta_2+\lambda)$, where $t\neq0$.
		\item $\bar{\alpha}_2-\alpha_2=4$, $g(\partial,\lambda)=t\lambda^2(4(\partial+\beta_2)^3+6(\partial+\beta_2)^2\lambda-(\partial+\beta_2)\lambda^2+\alpha_2\lambda^3)$, where $t\neq0$.
		\item $\bar{\alpha}_2=1$ and $\alpha_2=-4$, $g(\partial,\lambda)=t((\partial+\beta_2)^4\lambda^2-10(\partial+\beta_2)^2\lambda^4-17(\partial+\beta_2)\lambda^5-8\lambda^6)$, where $t\neq0$.
		
		\item $\bar{\alpha}_2=\frac{7}{2}\pm\frac{\sqrt{19}}{2}$ and $\alpha_2=-\frac{5}{2}\pm\frac{\sqrt{19}}{2}$, $g(\partial,\lambda)=t((\partial+\beta_2)^4\lambda^3-(2\alpha_2+3)(\partial+\beta_2)^3\lambda^4-3\alpha_2(\partial+\beta_2)^2\lambda^5-(3\alpha_2+1)(\partial+\beta_2)\lambda^6-(\alpha_2+\frac{9}{28})\lambda^7)$, where $t\neq0$.
	\end{enumerate}
The value of $dim(Ext(V_{\bar{\delta},\bar{\alpha},\bar{\beta}},V_{\delta,\alpha,\beta}))$ is 2 in subcase (i), and 1 in subcases (ii)-(vi).	
	\end{enumerate}
\end{theorem}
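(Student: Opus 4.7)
The plan is to apply the module identities \eqref{m21}--\eqref{m25} to the generator $\bar v$ using the formulas \eqref{se6} and equate the $\bar v$- and $v$-components. The $\bar v$-components simply reproduce the defining relations of $V_{\bar\delta,\bar\alpha,\bar\beta}$ and carry no information, so everything reduces to a system of polynomial equations in $f(\partial,\lambda)$ and $g(\partial,\lambda)$. The normalizations $\delta_1^2+\delta_2^2=1$ and $\bar\delta_1^2+\bar\delta_2^2=1$ force $(\delta_1,\delta_2),(\bar\delta_1,\bar\delta_2)\in\{(1,0),(0,1)\}$, giving four subcases to handle.

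In the matching subcase $(\delta_1,\delta_2)=(\bar\delta_1,\bar\delta_2)=(1,0)$, the $v$-component of $[A_\lambda,A_\mu]\bar v=(\lambda-\mu)A_{\lambda+\mu}\bar v$ is precisely the classical Virasoro extension cocycle for $f$ with weights $(\alpha_1,\beta_1)$ and $(\bar\alpha_1,\bar\beta_1)$, whose classification modulo coboundaries (Lemma \ref{l23} specialized to $\delta_2=\bar\delta_2=0$) is Proposition 2.3 of \cite{ckw1}; this produces exactly the six families (i)--(vi) together with the constraints $\beta_1=\bar\beta_1$ and $\bar\alpha_1-\alpha_1\in\{0,1,2,3,4,5,6\}$. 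Simultaneously, the $v$-component of $[B_\lambda,B_\mu]\bar v=(\lambda-\mu)B_{\lambda+\mu}\bar v$ reduces to $(\lambda-\mu)g(\partial,\lambda+\mu)=0$, since $B_\lambda v=0$ and $\bar\delta_2=0$, so $g=0$; the relation $[A_\lambda,B_\mu]\bar v=0$ is then automatic. Case (2) follows by the $A\leftrightarrow B$ symmetry.

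The two mixed subcases, say $(\delta_1,\delta_2)=(1,0),(\bar\delta_1,\bar\delta_2)=(0,1)$, must be shown to contribute only coboundaries. Here the coupling relation $[A_\lambda,B_\mu]\bar v=0$ reads
\begin{equation*}
(\partial+\lambda+\bar\alpha_2\mu+\bar\beta_2)f(\partial,\lambda)+g(\partial+\lambda,\mu)(\partial+\alpha_1\lambda+\beta_1)=0.
\end{equation*}
Viewed as a polynomial identity in $\mu$, the left side has degree one with leading coefficient $\bar\alpha_2 f(\partial,\lambda)$, forcing $g(\partial,\mu)=g_1(\partial)\mu+g_0(\partial)$; matching the $\mu^0$ and $\mu^1$ coefficients gives $\bar\alpha_2 g_0(\partial)=(\partial+\bar\beta_2)g_1(\partial)$, so everything is controlled by a single polynomial $c_1(\partial):=g_1(\partial)$, yielding $f(\partial,\lambda)=-\bar\alpha_2^{-1}c_1(\partial+\lambda)(\partial+\alpha_1\lambda+\beta_1)$ and $g(\partial,\mu)=\bar\alpha_2^{-1}c_1(\partial)(\partial+\bar\alpha_2\mu+\bar\beta_2)$. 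This is exactly the coboundary produced by Lemma \ref{l23} with $\varphi(\partial)=-\bar\alpha_2^{-1}c_1(\partial)$, so every such extension is trivial; the mirror subcase is symmetric.

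The main obstacle is the faithful transcription of the six Virasoro families in case (1), in particular the exceptional case (v) with $(\bar\alpha_1,\alpha_1)=(1,-4)$ and the irrational-parameter family (vi); rather than redo those polynomial calculations we appeal directly to \cite{ckw1}. Everything else is routine polynomial bookkeeping together with a degree-in-$\mu$ argument.
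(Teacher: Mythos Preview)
Your argument is correct and, in both the matching and the mixed subcases, slightly more economical than the paper's.

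In the matching case $(\delta_1,\delta_2)=(\bar\delta_1,\bar\delta_2)=(1,0)$ the paper reads off equation \eqref{te62} (the $v$-component of $[A_\lambda,B_\mu]\bar v=0$) and then invokes Lemma~\ref{r3} to force $g=0$; you instead read off the $v$-component of $[B_\lambda,B_\mu]\bar v=(\lambda-\mu)B_{\lambda+\mu}\bar v$, which immediately gives $(\lambda-\mu)g(\partial,\lambda+\mu)=0$ without any auxiliary lemma. In the mixed case $(\delta_1,\delta_2)=(1,0),(\bar\delta_1,\bar\delta_2)=(0,1)$ the paper analyses \eqref{te61} with $\mu=0$ (splitting into $\alpha_1=1$ and $\alpha_1\neq 1$) and \eqref{te63} separately to produce candidate forms for $f$ and $g$, and only then substitutes into \eqref{te62} to tie the two unknown polynomials together; your degree-in-$\mu$ argument extracts both $f$ and $g$ directly from \eqref{te62} alone, avoiding the case split entirely. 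The trade-off is that the paper's route makes it transparent that \eqref{te61} and \eqref{te63} are individually satisfied, whereas in your approach this is implicit in the observation that the resulting $(f,g)$ is a coboundary and hence automatically a cocycle. Either way one lands on the Cheng--Kac--Wakimoto list for the surviving Virasoro cocycle equation, so the classifications agree.
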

\begin{proof}
	Applying both sides of (\ref{m21}), (\ref{m22}) and (\ref{m23}) to $\bar{v}$ and comparing the corresponding coefficients of $v$, we obtain \begin{align}
	&\bar{\delta}_1f(\partial,\lambda)(\partial+\lambda+\bar{\alpha}_1\mu+\bar{\beta}_1)+\delta_1f(\partial+\lambda,\mu)(\partial+\alpha_1\lambda+\beta_1)\notag\\
	&\quad-\bar{\delta}_1f(\partial,\mu)(\partial+\mu+\bar{\alpha}_1\lambda+\bar{\beta}_1)-\delta_1f(\partial+\mu,\lambda)(\partial+\alpha_1\mu+\beta_1)=(\lambda-\mu)f(\partial,\lambda+\mu),\label{te61}\\
	&\bar{\delta}_2f(\partial,\lambda)(\partial+\lambda+\bar{\alpha}_2\mu+\bar{\beta}_2)+\delta_1g(\partial+\lambda,\mu)(\partial+\alpha_1\lambda+\beta_1)\notag\\
	&\quad-\bar{\delta}_1g(\partial,\mu)(\partial+\mu+\bar{\alpha}_1\lambda+\bar{\beta}_1)-\delta_2f(\partial+\mu,\lambda)(\partial+\alpha_2\mu+\beta_2)=0,\label{te62}\\
	&\bar{\delta}_2g(\partial,\lambda)(\partial+\lambda+\bar{\alpha}_2\mu+\bar{\beta}_2)+\delta_2g(\partial+\lambda,\mu)(\partial+\alpha_2\lambda+\beta_2)\notag\\
	&\quad-\bar{\delta}_2g(\partial,\mu)(\partial+\mu+\bar{\alpha}_2\lambda+\bar{\beta}_2)-\delta_2g(\partial+\mu,\lambda)(\partial+\alpha_2\mu+\beta_2)=(\lambda-\mu)g(\partial,\lambda+\mu).\label{te63}
	\end{align}
	
	If $(\delta_1,\bar{\delta}_1,\delta_2,\bar{\delta}_2)=(1,1,0,0)$ or $(\delta_1,\bar{\delta}_1,\delta_2,\bar{\delta}_2)=(0,0,1,1)$, $g(\partial,\lambda)=0$ or $f(\partial,\lambda)=0$ follows from (\ref{te62}) and Lemma \ref{r3}, and then the result can be inferred from Theorem 3.2 in \cite{ckw1} (or Theorem 2.7 in \cite{lhw2}).
	
	If $(\delta_1,\bar{\delta}_1,\delta_2,\bar{\delta}_2)=(1,0,0,1)$, then putting $\mu=0$ in (\ref{te61}), we can obtain $$f(\partial+\lambda,0)(\partial+\alpha_1\lambda+\beta_1)=f(\partial,\lambda)(\partial+\lambda+\beta_1).$$ So when $\alpha_1=1$, we have $f(\partial,\lambda)=f(\partial+\lambda,0)=s(\partial+\lambda)$ for some polynomial $s$. If $\alpha_1\neq1$, we can denote $f(\partial,\lambda)=h(\partial,\lambda)(\partial+\alpha_1\lambda+\beta_1)$, and thus $f(\partial+\lambda,0)=h(\partial+\lambda,0)(\partial+\lambda+\beta_1)$. Then one can deduce that $h(\partial,\lambda)=h(\partial+\lambda,0)$. It is not difficult to check that $f(\partial,\lambda)=s(\partial+\lambda)(\partial+\alpha_1\lambda+\beta_1)$ for some polynomial $s$. On the other hand, dealing with (\ref{te63}) in a similar way, we have $
	g(\partial,\lambda)=
	t(\partial)(\partial+\bar{\alpha}_2\lambda+\bar{\beta}_2)$ for $\bar{\alpha}_2\neq0$, where $t$ is a polynomial. Putting these results in (\ref{te62}), we can obtain\begin{align}
	\begin{cases}
	s(\partial+\lambda)(\partial+\lambda+\bar{\alpha}_2\mu+\bar{\beta}_2)+t(\partial+\lambda)(\partial+\lambda+\bar{\alpha}_2\mu+\bar{\beta}_2)(\partial+\lambda+\beta_1)=0,&\alpha_1=1,\\
	s(\partial+\lambda)(\partial+\alpha_1\lambda+\beta_1)(\partial+\lambda+\bar{\alpha}_2\mu+\bar{\beta}_2)\\
	\qquad+t(\partial+\lambda)(\partial+\lambda+\bar{\alpha}_2\mu+\bar{\beta}_2)(\partial+\alpha_1\lambda+\beta_1)=0,&\alpha_1\neq1.\\
	\end{cases}	
	\end{align}
	The solutions are concluded as follows.
	 \begin{enumerate}[(i)]
		\item If $\alpha_1=1$, then $f(\partial,\lambda)=-t(\partial+\lambda)(\partial+\lambda+\beta_1),g(\partial,\lambda)=t(\partial)(\partial+\bar{\alpha}_2\lambda+\bar{\beta}_2)$ for some polynomial $t$. The extension is trivial.
		\item If $\alpha_1\neq1$, then $f(\partial,\lambda)=s(\partial+\lambda)(\partial+\alpha_1\lambda+\beta_1),g(\partial,\lambda)=-s(\partial)(\partial+\bar{\alpha}_2\lambda+\bar{\beta}_2)$ for some polynomial $t$. The extension is trivial.
	\end{enumerate}
	
	If $(\delta_1,\bar{\delta}_1,\delta_2,\bar{\delta}_2)=(0,1,1,0)$, one can deduce the result similarly.
	
\end{proof}
\section{For solvable rank two Lie conformal algebras }\label{s4}
In this section, we classify the extension of irreducible modules over a solvable rank two  Lie conformal algebra $\mathcal{R}$. Then there is a basis $\{A,B\}$ of $\mathcal{R}$ such that
\begin{equation}
[A_\lambda A]=Q_1(\partial,\lambda)B,\ [A_\lambda B]=p(\lambda)B,\ [B_\lambda B]=0,
\end{equation}
for some polynomial $p(\lambda)$ and some skew-symmetric polynomial $Q_1(\partial,\lambda)$ satisfying $p(\lambda)Q_1(\partial,\lambda)=0$ \cite{bch}. 
If $V$ is a non-trivial finite irreducible $\mathcal{R}$-module, it was shown in \cite{xhw} that \begin{equation}
V\cong V_{\phi_A,\phi_B}=\mathbb{C}[\partial]v,\quad A_\lambda v=\phi_A(\lambda)v,\quad B_\lambda v=\phi_B(\lambda)v,
\end{equation}
where $\phi_A(\lambda)$, $\phi_B(\lambda)$ are not zero  simultaneously.
Moreover, $\phi_B(\lambda) \neq 0 $ only if $p(\lambda)=Q_1(\partial,\lambda)=0$.

By definition \ref{d2}, a $\mathcal{R}$-module structure on $V$ is given by $A_\lambda,B_\lambda\in End_\mathbb{C}(V)[\lambda]$ such that
\begin{align}
&[A_{\lambda},A_{\mu}]=Q_1(-\lambda-\mu,\lambda)B_{\lambda+\mu}, \label{m11}\\
&[A_{\lambda},B_{\mu}]=p(\lambda)B_{\lambda+\mu}, \label{m12}\\
&[B_{\lambda},B_{\mu}]=0, \label{m13}\\
&[\partial,A_{\lambda}]=-\lambda A_{\lambda}, \label{m14}\\
&[\partial,B_{\lambda}]=-\lambda B_{\lambda}. \label{m15}
\end{align}
\subsection{$0\longrightarrow \mathbb{C}c_{\eta} \longrightarrow  E\longrightarrow  V_{\phi_A,\phi_B} \longrightarrow   0$}\hfill\\

First, we consider extensions of finite irreducible $\mathcal{R}$-modules of the form \begin{align}\label{f11}
0\longrightarrow \mathbb{C}c_{\eta} \longrightarrow  E\longrightarrow  V_{\phi_A,\phi_B} \longrightarrow   0.
\end{align}Then $E$ is isomorphic to $\mathbb{C}c_{\eta}\oplus V_{\phi_A,\phi_B}=\mathbb{C}c_{\eta}\oplus \mathbb{C}[\partial]v$ as a vector space, and the following identities hold in $E$:
\begin{align}\label{se1}
&\mathcal{R}_\lambda c_{\eta}=0,\quad\partial c_{\eta}=\eta c_{\eta}, \notag\\
&A_\lambda v=\phi_A(\lambda)v+f(\lambda)c_{\eta},\quad B_\lambda v=\phi_B(\lambda)v+g(\lambda)c_{\eta}, 
\end{align}
where $f(\lambda), g(\lambda) \in \mathbb{C}[\lambda]$.

\begin{lemma}\label{l11}
	All trivial extensions of finite irreducible $\mathcal{R}$-modules of the form (\ref{f11}) are given by (\ref{se1}), and $f(\lambda),g(\lambda)$ are the same scalar multiples of $\phi_A(\lambda),\phi_B(\lambda)$, respectively.
\end{lemma}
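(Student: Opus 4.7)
The plan is to mimic the argument of Lemma~\ref{l21}, adapted to the solvable setting where the irreducible action involves two scalar polynomials $\phi_A(\lambda),\phi_B(\lambda)$ rather than affine expressions in $\partial$. Suppose the extension (\ref{f11}) is trivial; then there exists a vector $v' = k c_\eta + l(\partial) v \in E$ with $k\in\mathbb{C}$ and $0\neq l(\partial)\in\mathbb{C}[\partial]$ which generates a copy of $V_{\phi_A,\phi_B}$ inside $E$, so that
\begin{align*}
A_\lambda v' = \phi_A(\lambda) v', \qquad B_\lambda v' = \phi_B(\lambda) v'.
\end{align*}
The first step is to expand each of these two ways: once using the irreducible action (which gives $\phi_A(\lambda) v' = k\phi_A(\lambda) c_\eta + l(\partial)\phi_A(\lambda) v$ and similarly for $B$), and once using (\ref{se1}) together with conformal sesquilinearity, which in $E$ yields
\begin{align*}
A_\lambda v' = l(\partial+\lambda)\phi_A(\lambda) v + l(\partial+\lambda)f(\lambda) c_\eta, \\
B_\lambda v' = l(\partial+\lambda)\phi_B(\lambda) v + l(\partial+\lambda)g(\lambda) c_\eta.
\end{align*}

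Next I would compare the coefficients of $v$. This gives the identities
\begin{align*}
l(\partial+\lambda)\phi_A(\lambda) = l(\partial)\phi_A(\lambda), \qquad l(\partial+\lambda)\phi_B(\lambda) = l(\partial)\phi_B(\lambda).
\end{align*}
Because $\phi_A$ and $\phi_B$ do not vanish simultaneously by Proposition~\ref{m}(ii), at least one of them is a nonzero polynomial, so we may cancel to conclude $l(\partial+\lambda) = l(\partial)$ as a polynomial in $\partial$ and $\lambda$; hence $l(\partial)$ is a nonzero constant $l\in\mathbb{C}^\times$. Comparing the coefficients of $c_\eta$ then yields $l f(\lambda) = k\phi_A(\lambda)$ and $l g(\lambda) = k\phi_B(\lambda)$, so $f$ and $g$ are the common scalar multiple $k/l$ of $\phi_A$ and $\phi_B$ respectively.

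Finally I would verify the converse: if $f(\lambda) = c\phi_A(\lambda)$ and $g(\lambda) = c\phi_B(\lambda)$ for some $c\in\mathbb{C}$, then setting $v' = c\, c_\eta + v$ gives, by a direct computation from (\ref{se1}), $A_\lambda v' = \phi_A(\lambda) v'$ and $B_\lambda v' = \phi_B(\lambda) v'$, so $\mathbb{C}[\partial]v'$ is a submodule of $E$ isomorphic to $V_{\phi_A,\phi_B}$ complementary to $\mathbb{C}c_\eta$. I do not anticipate a genuine obstacle here; the only subtle point is invoking the hypothesis that $\phi_A$ and $\phi_B$ are not simultaneously zero in order to force $l(\partial)$ to be constant, and this is immediate from the irreducibility clause in Proposition~\ref{m}(ii).
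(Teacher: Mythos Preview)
Your proof is correct and follows essentially the same approach as the paper's own argument: both compute $A_\lambda v'$ and $B_\lambda v'$ in two ways, compare coefficients to force $l(\partial)$ constant, and then read off that $f,g$ are a common scalar multiple of $\phi_A,\phi_B$, with the converse handled by the explicit choice $v'=c\,c_\eta+v$. The only cosmetic difference is that the paper immediately evaluates $l(\partial+\lambda)c_\eta$ as $l(\eta+\lambda)c_\eta$ (since $\partial$ acts as $\eta$ on $c_\eta$), whereas you leave it as $l(\partial+\lambda)$ before reducing; this is harmless once $l$ is shown to be constant.
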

\begin{proof}
	Assume that (\ref{f11}) is a trivial extension, that is, there exists $v'=kc_\eta+l(\partial)v\in E$, where $k\in\mathbb{C}$ and $0\neq l(\partial)\in\mathbb{C}[\partial]$, such that \begin{align*}
	&A_\lambda v'=\phi_A(\lambda)v'=k\phi_A(\lambda)c_\eta+\phi_A(\lambda)l(\partial)v,\\
	&B_\lambda v'=\phi_B(\lambda)v'=k\phi_B(\lambda)c_\eta+\phi_B(\lambda)l(\partial)v.
	\end{align*}
	
	On the other hand, it follows from (\ref{se1}) that
	\begin{align*}
	&A_\lambda v'=f(\lambda)l(\eta+\lambda)c_\eta+\phi_A(\lambda)l(\partial+\lambda)v,\\
	&B_\lambda v'=g(\lambda)l(\eta+\lambda)c_\eta+\phi_B(\lambda)l(\partial+\lambda)v.
	\end{align*}
	We can obtain that $l(\partial)$ is a nonzero constant by comparing both expressions for $A_\lambda v'$ and $B_\lambda v'$. Thus $f(\lambda)$ and $g(\lambda)$ are the same scalar multiple of $\phi_A(\lambda)$ and $\phi_B(\lambda)$ respectively.
	
	Conversely, if $f(\lambda)=k\phi_A(\lambda)$ and $g(\lambda)=k\phi_B(\lambda)$ for some $k\in\mathbb{C}$, setting $v'=kc_\eta +v$ we can deduce that (\ref{f11}) is a trivial extension.
\end{proof}

The following key lemma plays a crucial role
in simplifying the calculations in our classification of nontrivial extensions.
\begin{lemma}\label{r1}
	Let $a(\lambda),b(\mu),c(\lambda),d(\mu)$ be four polynomials in $\mathbb{C}[\lambda,\mu]$. If $a(\lambda)$ and $b(\mu)$ are not 0 simultaneously, the equation $$a(\lambda)d(\mu)-b(\mu)c(\lambda)=0$$implies that $c(\lambda)$ and $d(\mu)$ are the same multiples of $a(\lambda)$ and $b(\mu)$ respectively. Particularly, if both $a(\lambda)$ and $b(\mu)$ are not 0, the multiple is a scalar multiple.
\end{lemma}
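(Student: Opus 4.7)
The plan is to reduce the statement to a classical separation-of-variables argument via a short case split on whether both $a$ and $b$ are nonzero.

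First I would handle the main case, where $a(\lambda)\not\equiv 0$ and $b(\mu)\not\equiv 0$. Since $a$ is a nonzero one-variable polynomial, it has only finitely many roots, so I can pick $\lambda_0\in\mathbb{C}$ with $a(\lambda_0)\neq 0$. Substituting $\lambda=\lambda_0$ into the hypothesis $a(\lambda)d(\mu)=b(\mu)c(\lambda)$ gives
\[
d(\mu)=k\,b(\mu),\qquad k:=\frac{c(\lambda_0)}{a(\lambda_0)}\in\mathbb{C}.
\]
Feeding this expression for $d(\mu)$ back into the original equation and cancelling the nonzero polynomial $b(\mu)$ (using that $\mathbb{C}[\mu]$ is an integral domain) yields $c(\lambda)=k\,a(\lambda)$. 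This establishes the ``particularly'' clause: $c$ and $d$ are the same \emph{scalar} multiple of $a$ and $b$, respectively.

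The remaining case is when exactly one of $a,b$ vanishes; by the ``not simultaneously zero'' hypothesis, the other is nonzero. Without loss of generality, suppose $a\equiv 0$ and $b\not\equiv 0$. Then the hypothesis collapses to $b(\mu)c(\lambda)=0$, and another application of the integral domain property forces $c(\lambda)=0$, while $d(\mu)$ is unconstrained by the equation. The symmetric subcase $b\equiv 0$, $a\not\equiv 0$ is identical with the roles swapped. In either degenerate subcase, the polynomial corresponding to the vanishing factor is itself zero, so the lemma's first assertion (that $c$ and $d$ are ``the same multiples'' of $a$ and $b$) holds in the natural degenerate sense.

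There is essentially no technical obstacle here: the entire proof is a one-point evaluation combined with cancellation in a polynomial ring, and the case analysis is mechanical. The value of the lemma lies not in the depth of its proof but in its repeated use later on, where it will be invoked to separate variables in the polynomial identities produced by cocycle conditions on module actions.
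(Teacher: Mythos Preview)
Your proof is correct and follows essentially the same separation-of-variables idea as the paper. The only cosmetic difference is that the paper writes the main case as $\frac{c(\lambda)}{a(\lambda)}=\frac{d(\mu)}{b(\mu)}$ and observes this common value must be a constant, whereas you obtain the constant by evaluating at a single $\lambda_0$; the degenerate case is handled identically up to swapping the roles of $a$ and $b$.
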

\begin{proof}
	Without loss of generality, we assume that $a(\lambda)\neq 0$. If $b(\mu)=0$, then $d(\mu)=0$. In this case, the conclusion is founded whatever $c(\lambda)$ is. If $b(\mu)\neq0$, we have $\frac{c(\lambda)}{a(\lambda)}=\frac{d(\mu)}{b(\mu)}:=e(\lambda,\mu)$. It is easy to see that $e(\lambda,\mu)$ is a constant, and the proof is done. 
\end{proof}
\begin{theorem}\label{t11}
	For solvable rank two Lie conformal algebra $\mathcal{R}$, nontrivial extensions of finite irreducible conformal modules of the form (\ref{f11}) exist only if $p(\lambda)\neq 0$ and $Q_1(\partial,\lambda)=0$. Moreover, they are given (up to equivalence) by (\ref{se1}). The values of $\eta$ along with the corresponding polynomials $\phi_A(\lambda),\phi_B(\lambda),f(\lambda)$ and $g(\lambda)$ giving rise to nontrivial extensions, are listed as follows: $\eta\in\mathbb{C}$,
	$\phi_A(\lambda)=-p(\lambda)$, $\phi_B(\lambda)=0$, $f(\lambda)=0$ and $g(\lambda)$ is a nonzero constant. Thus $dim(Ext(V_{\phi_A,\phi_B},\mathbb{C}c_\eta))=\delta_{\phi_A(\lambda)+p(\lambda),0}$.

\end{theorem}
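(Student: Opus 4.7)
The plan is to convert the extension data into a system of polynomial equations in the cocycles $f(\lambda),g(\lambda)$, and then use the defining constraint $p(\lambda)Q_1(\partial,\lambda)=0$ together with Proposition \ref{m} to collapse every case except one to a trivial extension. First, applying (\ref{m11})--(\ref{m13}) to the generator $v$ and separating the coefficients of $v$ and $c_\eta$ (noting that $\mathcal{R}_\lambda c_\eta=0$, so that (\ref{m14})--(\ref{m15}) impose no further constraint), I obtain the system
\begin{align*}
\phi_A(\mu)f(\lambda)-\phi_A(\lambda)f(\mu) &= Q_1(-\lambda-\mu,\lambda)g(\lambda+\mu),\\
\phi_B(\mu)f(\lambda)-\phi_A(\lambda)g(\mu) &= p(\lambda)g(\lambda+\mu),\\
\phi_B(\mu)g(\lambda) &= \phi_B(\lambda)g(\mu).
\end{align*}
By Lemma \ref{l11}, the cocycle is trivial iff $(f,g)=(c\phi_A,c\phi_B)$ for a single scalar $c$.

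In the case $p(\lambda)=0$, I claim every extension is trivial. If $\phi_B=0$, the second equation forces $g=0$ and then the first gives $f=c\phi_A$ by Lemma \ref{r1}, so $(f,g)=(c\phi_A,c\phi_B)$. If $\phi_B\neq 0$, Proposition \ref{m} additionally forces $Q_1=0$; Lemma \ref{r1} applied to the first and third equations gives $f=c_1\phi_A$, $g=c_2\phi_B$, and substituting into the second equation forces $c_1=c_2$ (with the subcase $\phi_A=0$ handled directly: the second equation gives $f=0$ and the third gives $g=c\phi_B$, which matches $c\phi_A=0$).

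The case $p(\lambda)\neq0$ contains the only nontrivial extensions. Here $Q_1=0$ and $\phi_B=0$ by Proposition \ref{m}. The first equation becomes $\phi_A(\mu)f(\lambda)=\phi_A(\lambda)f(\mu)$, so Lemma \ref{r1} gives $f=c\phi_A$, and subtracting the trivial cocycle $(c\phi_A,0)$ lets me normalize $f=0$. The second equation then reads $\phi_A(\lambda)g(\mu)+p(\lambda)g(\lambda+\mu)=0$; setting $\mu=0$ gives $p(\lambda)g(\lambda)=-\phi_A(\lambda)g(0)$, so either $g\equiv 0$ (trivial) or $g(0)\neq 0$ and $p\mid\phi_A$. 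Writing $\phi_A=pq$ and substituting back reduces everything to the functional equation $q(\lambda)q(\mu)=-q(\lambda+\mu)$, which is the main obstacle. Setting $\lambda=\mu=0$ gives $q(0)\in\{0,-1\}$; the value $0$ would force $g(0)=0$, so $q(0)=-1$. Differentiating in $\lambda$ at $0$ yields $q'(\mu)=-q'(0)q(\mu)$, and the only polynomial solution requires $q'(0)=0$, hence $q\equiv-1$. Therefore $\phi_A=-p$ and $g=g(0)$ is a nonzero constant; nontriviality is automatic because every trivial cocycle has $g=c\phi_B=0$, confirming $dim(Ext(V_{\phi_A,\phi_B},\mathbb{C}c_\eta))=\delta_{\phi_A(\lambda)+p(\lambda),0}$.
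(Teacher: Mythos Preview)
Your proof is correct and follows essentially the same strategy as the paper: derive the same cocycle equations from (\ref{m11})--(\ref{m13}), reduce via Lemma~\ref{r1}, and isolate the single nontrivial case $p\neq0$, $Q_1=0$. The only notable difference is in that final case: the paper obtains $\phi_A=-p$ and $g=\text{const}$ directly by comparing the highest-degree coefficients in $\mu$ of the equation $-\phi_A(\lambda)g(\mu)=p(\lambda)g(\lambda+\mu)$, whereas you pass through the functional equation $q(\lambda)q(\mu)=-q(\lambda+\mu)$ and differentiate. Both arguments are short and valid; the paper's leading-coefficient comparison is slightly more direct, while your route makes the rigidity of the solution more transparent. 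Your case split by $\phi_B$ (rather than by $Q_1$) is also a harmless reorganization that avoids one redundant subcase.
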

\begin{proof}
	Applying both sides of (\ref{m11}), (\ref{m12}) and (\ref{m13}) to $v$ and comparing the corresponding efficients, we obtain \begin{align}
	&Q_1(-\lambda-\mu,\lambda)\phi_B(\lambda+\mu)=p(\lambda)\phi_B(\lambda+\mu)=0,\label{te11}\\
	&f(\lambda)\phi_A(\mu)-f(\mu)\phi_A(\lambda)=Q_1(-\lambda-\mu,\lambda)g(\lambda+\mu),\label{te12}\\
	&f(\lambda)\phi_B(\mu)-g(\mu)\phi_A(\lambda)=p(\lambda)g(\lambda+\mu),\label{te13}\\
	&g(\lambda)\phi_B(\mu)-g(\mu)\phi_B(\lambda)=0.\label{te14}
	\end{align}
	
	We first consider the case that $p(\lambda)=Q_1(\partial,\lambda)=0$ and take it in (\ref{te12}), (\ref{te13}) and (\ref{te14}). Since $\phi_A(\lambda)$, $\phi_B(\lambda)$ are not zero  simultaneously, we can assume that $\phi_A(\lambda)\neq 0$. By Lemma \ref{r1} and (\ref{te12}), we have $f(\lambda)=k\phi_A(\lambda)$ for some $k\in\mathbb{C}$. It can be deduced from (\ref{te13}) that $g(\mu)=k\phi_B(\mu)$ for the same $k$. According to Lemma \ref{l11}, the extension is trivial in this case. 
	
	Now we assume that $p(\lambda)=0$ and $Q_1(\partial,\lambda)\neq0$, which implies that $\phi_B(\lambda)=0$. Then $\phi_A(\lambda)$ must not be zero and $g(\lambda)$ is forced to be 0 by (\ref{te13}). Taking these results in (\ref{te12}) and applying Lemma \ref{r1}, we can see that $f(\lambda)$ is a scalar multiple of $\phi_A(\lambda)$ and thus the extension is trivial.
	
	Lastly, we discuss the case that $p(\lambda)\neq0$ but $Q_1(\partial,\lambda)=0$. Then $\phi_B(\lambda)=0$ and $\phi_A(\lambda)\neq 0$. In this case, (\ref{te12}) means that $f(\lambda)=k\phi_A(\lambda)$ for some $k\in\mathbb{C}$ by Lemma \ref{r1}. So if $g(\lambda)=0$, the extension is trivial. If $g(\lambda)\neq 0$, we can obtain that $\phi_A(\lambda)=-p(\lambda)$ by comparing the coefficients of the highest term of $\mu$ in  (\ref{te13}), which implies $g(\lambda)$ is a nonzero constant. By Lemma \ref{l11}, the corresponding extension is nontrivial.
	
	Since $p(\lambda)Q_1(\partial,\lambda)=0$, we have completed the proof.
\end{proof}
\subsection{$0\longrightarrow V_{\phi_A,\phi_B} \longrightarrow  E\longrightarrow\mathbb{C}c_{\eta}   \longrightarrow   0$}\hfill\\

Next, we consider extensions of finite irreducible $\mathcal{R}$-modules of the form \begin{align}\label{f12}
0\longrightarrow V_{\phi_A,\phi_B} \longrightarrow  E\longrightarrow\mathbb{C}c_{\eta}   \longrightarrow   0.
\end{align}Then $E$ is isomorphic to $V_{\phi_A,\phi_B}\oplus\mathbb{C}c_{\eta} =\mathbb{C}[\partial]v\oplus\mathbb{C}c_{\eta} $ as a vector space, and the following identities hold in $E$:
\begin{align}\label{se2}
&A_\lambda v=\phi_A(\lambda)v,\quad B_\lambda v=\phi_B(\lambda)v, \notag\\
&A_\lambda c_{\eta}=f(\partial,\lambda)v,\quad B_\lambda c_{\eta}=g(\partial,\lambda)v,\quad\partial c_{\eta}=\eta c_{\eta}+h(\partial)v, 
\end{align}
where $f(\partial,\lambda), g(\partial,\lambda), h(\partial)\in \mathbb{C}[\partial,\lambda]$.

\begin{lemma}\label{l12}
	All trivial extensions of finite irreducible $\mathcal{R}$-modules of the form (\ref{f12}) are given by (\ref{se2}), and $f(\partial,\lambda)=\varphi(\partial+\lambda)\phi_A(\lambda),g(\partial,\lambda)=\varphi(\partial+\lambda)\phi_B(\lambda)$ and $h(\partial)=(\partial-\eta)\varphi(\partial)$, where $\varphi$ is a polynomial.
\end{lemma}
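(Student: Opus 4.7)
The plan is to proceed by direct analogy with Lemma \ref{l22}, which handled the same kind of question for the semisimple (double Virasoro) case; the structure of the argument depends only on the shape of the extension (\ref{f12}) and not on the particular $\lambda$-brackets of $\mathcal{R}$, so the $\lambda$-brackets defining the solvable algebra play no explicit role in this lemma.

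For the ``only if'' direction I would assume (\ref{f12}) is a trivial extension, which by definition means that there is a generator $c'_\eta = k c_\eta + l(\partial) v \in E$ with $0 \neq k \in \mathbb{C}$ and $l(\partial)\in\mathbb{C}[\partial]$ satisfying $A_\lambda c'_\eta = 0$, $B_\lambda c'_\eta = 0$ and $\partial c'_\eta = \eta c'_\eta$. Using (\ref{se2}) I would then compute
\begin{align*}
A_\lambda c'_\eta &= \bigl( k f(\partial,\lambda) + l(\partial+\lambda)\phi_A(\lambda) \bigr) v, \\
B_\lambda c'_\eta &= \bigl( k g(\partial,\lambda) + l(\partial+\lambda)\phi_B(\lambda) \bigr) v, \\
\partial c'_\eta &= k\eta c_\eta + \bigl( k h(\partial) + \partial l(\partial) \bigr) v,
\end{align*}
and compare these to $0$, $0$, and $\eta c'_\eta = k\eta c_\eta + \eta l(\partial) v$ respectively. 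Setting $\varphi(\partial) := -l(\partial)/k$, the three resulting identities are precisely $f(\partial,\lambda) = \varphi(\partial+\lambda)\phi_A(\lambda)$, $g(\partial,\lambda) = \varphi(\partial+\lambda)\phi_B(\lambda)$, and $h(\partial) = (\partial - \eta)\varphi(\partial)$.

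For the converse I would simply reverse the construction: given $f, g, h$ of the stated forms for some polynomial $\varphi$, set $c'_\eta := c_\eta - \varphi(\partial) v$ and verify, again using (\ref{se2}), that $A_\lambda c'_\eta = 0$, $B_\lambda c'_\eta = 0$, and $\partial c'_\eta = \eta c'_\eta$. This exhibits an equivalence between (\ref{f12}) and the split extension $0 \to V_{\phi_A,\phi_B} \to V_{\phi_A,\phi_B} \oplus \mathbb{C}c_\eta \to \mathbb{C}c_\eta \to 0$, so the extension is trivial. There is no real obstacle here beyond bookkeeping; the only minor point to mention is that $k \neq 0$ is forced by the requirement that $c'_\eta$ project to a nonzero generator of $\mathbb{C}c_\eta$, and this is what allows the division by $k$ that defines $\varphi$.
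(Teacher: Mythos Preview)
Your proof is correct and follows essentially the same approach as the paper's own proof: assume triviality, write $c'_\eta = kc_\eta + l(\partial)v$, compute $A_\lambda c'_\eta$, $B_\lambda c'_\eta$, $\partial c'_\eta$ from (\ref{se2}), compare, and then run the converse with $c'_\eta = c_\eta - \varphi(\partial)v$. The only difference is cosmetic: you make the substitution $\varphi(\partial) = -l(\partial)/k$ explicit, whereas the paper simply says the result follows by comparing expressions.
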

\begin{proof}
	Assume that (\ref{f12}) is a trivial extension, that is, there exists $c'_\eta=kc_\eta+l(\partial)v\in E$, where $0\neq k\in\mathbb{C}$ and $ l(\partial)\in\mathbb{C}[\partial]$, such that $A_\lambda c'_\eta=B_\lambda c'_\eta=0$ and $\partial c'_{\eta}=\eta c'_{\eta}=k\eta c_\eta+\eta l(\partial)v$.
	
	On the other hand, it follows from (\ref{se2}) that
	\begin{align*}
	&A_\lambda c'_\eta=(kf(\partial,\lambda)+l(\partial+\lambda)\phi_A(\lambda))v,\\
	&B_\lambda c'_\eta=(kg(\partial,\lambda)+l(\partial+\lambda)\phi_B(\lambda))v,\\
	&\partial c'_{\eta}=k\eta c_\eta+(kh(\partial)+\partial l(\partial))v.
	\end{align*}
	We can obtain the result by comparing both expressions for $A_\lambda c'_\eta,B_\lambda c'_\eta$ and $\partial c'_\eta$.
	
	Conversely, if $f(\partial,\lambda)=\varphi(\partial+\lambda)\phi_A(\lambda),g(\partial,\lambda)=\varphi(\partial+\lambda)\phi_B(\lambda)$ and $h(\partial)=(\partial-\eta)\varphi(\partial)$ for some polynomial $\varphi$, setting $c'_{\eta}=c_\eta -\varphi(\partial)v$ we can deduce that (\ref{f12}) is a trivial extension.
\end{proof}

\begin{theorem}\label{t12}
	For solvable rank two Lie conformal algebra $\mathcal{R}$, nontrivial extensions of finite irreducible conformal modules of the form (\ref{f12}) do not exist, that is, $dim(Ext(\mathbb{C}c_\eta,V_{\phi_A,\phi_B}))=0$.
\end{theorem}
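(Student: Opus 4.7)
The plan is to extract from the $\mathbb{C}[\partial]$-linearity axioms (\ref{m14})--(\ref{m15}) alone enough information to force the cocycle $(f,g,h)$ in (\ref{se2}) into the trivial form characterized by Lemma \ref{l12}; no appeal to (\ref{m11})--(\ref{m13}) will be needed.

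First I would apply the operator identity $[\partial, A_\lambda] = -\lambda A_\lambda$ to $c_\eta$. Using (\ref{se2}) and the sesquilinearity rule $A_\lambda(h(\partial)v) = h(\partial+\lambda)\phi_A(\lambda)v$, a direct expansion of $\partial(A_\lambda c_\eta) - A_\lambda(\partial c_\eta) = -\lambda A_\lambda c_\eta$ yields the polynomial identity
$$(\partial+\lambda-\eta)\,f(\partial,\lambda) = h(\partial+\lambda)\,\phi_A(\lambda).$$
An entirely parallel computation with (\ref{m15}) gives
$$(\partial+\lambda-\eta)\,g(\partial,\lambda) = h(\partial+\lambda)\,\phi_B(\lambda).$$

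Next I would perform the change of variables $u = \partial + \lambda$ and view both as identities in $\mathbb{C}[u,\lambda]$. Recalling that $\phi_A$ and $\phi_B$ are not simultaneously zero, assume without loss of generality $\phi_A(\lambda) \neq 0$; the case $\phi_A = 0,\ \phi_B \neq 0$ is symmetric (and there the first identity forces $f \equiv 0$ outright). Since $u - \eta \in \mathbb{C}[u]$ is coprime to the nonzero polynomial $\phi_A(\lambda) \in \mathbb{C}[\lambda]$, divisibility in $\mathbb{C}[u,\lambda]$ forces $h(u) = (u-\eta)\varphi(u)$ for some $\varphi \in \mathbb{C}[u]$. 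Substituting back gives $f(\partial,\lambda) = \varphi(\partial+\lambda)\phi_A(\lambda)$ and, from the second identity, $g(\partial,\lambda) = \varphi(\partial+\lambda)\phi_B(\lambda)$.

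This is precisely the shape of a trivial extension in Lemma \ref{l12}, so the extension is equivalent to the split one; as this was an arbitrary extension of the form (\ref{f12}), we conclude $dim(Ext(\mathbb{C}c_\eta, V_{\phi_A,\phi_B})) = 0$. I do not foresee a genuine obstacle here: the only thing to verify is that the coprimality argument over $\mathbb{C}[u,\lambda]$ is carried out carefully, and that the symmetric handling of the case $\phi_A = 0$ is recorded. The remaining bracket relations (\ref{m11})--(\ref{m13}) are automatically consistent with the trivial form and need not be invoked; the entire content of the theorem is extracted from the $\partial$-commutation axioms.
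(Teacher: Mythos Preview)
Your proof is correct and follows essentially the same route as the paper: both derive the two identities $(\partial+\lambda-\eta)f=h(\partial+\lambda)\phi_A(\lambda)$ and $(\partial+\lambda-\eta)g=h(\partial+\lambda)\phi_B(\lambda)$ from (\ref{m14})--(\ref{m15}), extract the factorization $h(\partial)=(\partial-\eta)\varphi(\partial)$, and invoke Lemma~\ref{l12}. Your version is slightly more explicit in justifying the divisibility step via coprimality in $\mathbb{C}[u,\lambda]$, whereas the paper simply asserts the factorization; otherwise the arguments coincide.
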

\begin{proof}
	Applying both sides of (\ref{m14}) and (\ref{m15})  to $c_\eta$ and comparing the corresponding efficients gives the following equations \begin{align}
	&(\partial+\lambda-\eta)f(\partial,\lambda)=h(\partial+\lambda)\phi_A(\lambda),\label{te21}\\
	&(\partial+\lambda-\eta)g(\partial,\lambda)=h(\partial+\lambda)\phi_B(\lambda).\label{te22}
	\end{align}
	Since $\phi_A(\lambda)$ and $\phi_B(\lambda)$ are not all equal to zero, the above equations imply that there exists a polynomial $\varphi$ such that $h(\partial)=(\partial-\eta)\varphi(\partial)$. Then we have $f(\partial,\lambda)=\varphi(\partial+\lambda)\phi_A(\lambda)$ and $g(\partial,\lambda)=\varphi(\partial+\lambda)\phi_B(\lambda)$. By Lemma \ref*{l12}, extensions of finite irreducible $\mathcal{R}$-modules of the form (\ref{f12}) are always trivial. 
\end{proof}
\subsection{$0\longrightarrow V_{\phi_A,\phi_B} \longrightarrow  E\longrightarrow  V_{\bar{\phi}_A,\bar{\phi}_B} \longrightarrow   0$}\hfill\\

Finally, we consider extensions of finite irreducible $\mathcal{R}$-modules of the form \begin{align}\label{f13}
0\longrightarrow V_{\phi_A,\phi_B} \longrightarrow  E\longrightarrow  V_{\bar{\phi}_A,\bar{\phi}_B} \longrightarrow   0.
\end{align}Then $E$ is isomorphic to $V_{\phi_A,\phi_B}\oplus V_{\bar{\phi}_A,\bar{\phi}_B} =\mathbb{C}[\partial]v\oplus\mathbb{C}[\partial]\bar{v} $ as a vector space, and the following identities hold in $E$:
\begin{align}\label{se3}
&A_\lambda v=\phi_A(\lambda)v,\quad B_\lambda v=\phi_B(\lambda)v, \notag\\
&A_\lambda \bar{v}=\bar{\phi}_A(\lambda)\bar{v}+f(\partial,\lambda)v,\quad B_\lambda \bar{v}=\bar{\phi}_B(\lambda)\bar{v}+g(\partial,\lambda)v, 
\end{align}
where $f(\partial,\lambda), g(\partial,\lambda)\in \mathbb{C}[\partial,\lambda]$.

\begin{lemma}\label{l13}
	All trivial extensions of finite irreducible $\mathcal{R}$-modules of the form (\ref{f13}) are given by (\ref{se3}), and $f(\partial,\lambda)=\varphi(\partial+\lambda)\phi_A(\lambda)-\varphi(\partial)\bar{\phi}_A(\lambda)$ and $g(\partial,\lambda)=\varphi(\partial+\lambda)\phi_B(\lambda)-\varphi(\partial)\bar{\phi}_B(\lambda)$ for some polynomial $\varphi$.
\end{lemma}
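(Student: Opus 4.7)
The plan is to mirror the structure of the preceding Lemmas \ref{l11}, \ref{l12}, and the analogous semisimple Lemma \ref{l23}: write down what it means for the extension to split, compute the action on a candidate splitting element in two ways, and match coefficients. The definition of a trivial extension gives an element $\bar v' = k(\partial)v + l(\partial)\bar v \in E$ with $l(\partial)\neq 0$ such that the $\mathbb{C}[\partial]$-span of $\bar v'$ carries the module structure of $V_{\bar\phi_A,\bar\phi_B}$; i.e., $A_\lambda \bar v' = \bar\phi_A(\lambda)\bar v'$ and $B_\lambda \bar v' = \bar\phi_B(\lambda)\bar v'$.

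First I would compute $A_\lambda \bar v'$ via (\ref{se3}) and the sesquilinearity of the action, obtaining
\begin{align*}
A_\lambda \bar v' &= k(\partial+\lambda)\phi_A(\lambda)v + l(\partial+\lambda)\bigl(\bar\phi_A(\lambda)\bar v + f(\partial,\lambda)v\bigr),
\end{align*}
and likewise for $B_\lambda \bar v'$. Comparing with $\bar\phi_A(\lambda)\bar v' = \bar\phi_A(\lambda)k(\partial)v + \bar\phi_A(\lambda)l(\partial)\bar v$ (and the analogue for $B$), the coefficients of $\bar v$ force
\begin{equation*}
\bar\phi_A(\lambda)\bigl(l(\partial+\lambda)-l(\partial)\bigr) = 0, \qquad \bar\phi_B(\lambda)\bigl(l(\partial+\lambda)-l(\partial)\bigr) = 0.
\end{equation*}
Because $\bar\phi_A$ and $\bar\phi_B$ are not simultaneously zero (Proposition \ref{m}(ii)), this forces $l(\partial+\lambda) = l(\partial)$, so $l$ is a nonzero constant, which we may normalize to $l = 1$ by absorbing it into $k$. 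Then matching the coefficients of $v$ yields
\begin{equation*}
f(\partial,\lambda) = -k(\partial+\lambda)\phi_A(\lambda) + k(\partial)\bar\phi_A(\lambda),\quad g(\partial,\lambda) = -k(\partial+\lambda)\phi_B(\lambda) + k(\partial)\bar\phi_B(\lambda),
\end{equation*}
and setting $\varphi(\partial) := -k(\partial)$ produces the stated formulas.

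For the converse, given the stated formulas with some polynomial $\varphi$, I would define $\bar v' := -\varphi(\partial)v + \bar v$ and verify by direct computation that $A_\lambda \bar v' = \bar\phi_A(\lambda)\bar v'$ and $B_\lambda \bar v' = \bar\phi_B(\lambda)\bar v'$, from which it follows that the map $\bar v \mapsto \bar v'$ induces an $\mathcal{R}$-module isomorphism $E \cong V_{\phi_A,\phi_B}\oplus V_{\bar\phi_A,\bar\phi_B}$ compatible with the extension, i.e., the extension is trivial.

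The only non-routine point is the step in which $l(\partial)$ is forced to be a nonzero constant; everything else is direct bookkeeping. Here the essential input is the irreducibility of $V_{\bar\phi_A,\bar\phi_B}$, which by Proposition \ref{m}(ii) guarantees $(\bar\phi_A,\bar\phi_B)\neq(0,0)$, so at least one of the two equations derived from the coefficient of $\bar v$ is nondegenerate and yields $l\in\mathbb{C}^\times$. This is the analogue of the corresponding step in Lemma \ref{l23}, where the roles of $\bar\phi_A,\bar\phi_B$ are played by $\bar\delta_i(\partial+\bar\alpha_i\lambda+\bar\beta_i)$.
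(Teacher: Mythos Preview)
Your proposal is correct and follows essentially the same approach as the paper: assume a splitting element $\bar v' = k(\partial)v + l(\partial)\bar v$, compute $A_\lambda\bar v'$ and $B_\lambda\bar v'$ two ways, compare coefficients of $\bar v$ to force $l$ constant, then read off $f$ and $g$ from the coefficients of $v$, and finally verify the converse with $\bar v' = -\varphi(\partial)v + \bar v$. Your explicit justification that $(\bar\phi_A,\bar\phi_B)\neq(0,0)$ via Proposition~\ref{m}(ii) makes the step $l\in\mathbb{C}^\times$ slightly more transparent than the paper, which simply asserts it.
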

\begin{proof}
	Assume that (\ref{f13}) is a trivial extension, that is, there exists $\bar{v}'=k(\partial)v+l(\partial)\bar{v}\in E$, where $k(\partial),l(\partial)\in\mathbb{C}[\partial]$ and $ l(\partial)\neq0$, such that \begin{align*}
	&A_\lambda \bar{v}'=\bar{\phi}_A(\lambda)\bar{v}'=k(\partial)\bar{\phi}_A(\lambda)v+l(\partial)\bar{\phi}_A(\lambda)\bar{v},\\
	&B_\lambda \bar{v}'=\bar{\phi}_B(\lambda)\bar{v}'=k(\partial)\bar{\phi}_B(\lambda)v+l(\partial)\bar{\phi}_B(\lambda)\bar{v}.
	\end{align*}
	
	On the other hand, it follows from (\ref{se3}) that
	\begin{align*}
	&A_\lambda \bar{v}'=(k(\partial+\lambda)\phi_A(\lambda)+l(\partial+\lambda)f(\partial,\lambda))v+l(\partial+\lambda)\bar{\phi}_A(\lambda)\bar{v},\\
	&B_\lambda \bar{v}'=(k(\partial+\lambda)\phi_B(\lambda)+l(\partial+\lambda)g(\partial,\lambda))v+l(\partial+\lambda)\bar{\phi}_B(\lambda)\bar{v}.
	\end{align*}
	Comparing both expressions for $A_\lambda \bar{v}'$ and $B_\lambda \bar{v}'$, we can obtain that $l(\partial)$ is a nonzero constant. Then we can give the expressions of $f(\partial,\lambda)$ and $g(\partial,\lambda)$.
	
	Conversely, if $f(\partial,\lambda)=\varphi(\partial+\lambda)\phi_A(\lambda)-\varphi(\partial)\bar{\phi}_A(\lambda)$ and $g(\partial,\lambda)=\varphi(\partial+\lambda)\phi_B(\lambda)-\varphi(\partial)\bar{\phi}_B(\lambda)$ for some polynomial $\varphi$, setting $\bar{v}'=-\varphi(\partial)v+\bar{v}$ we can deduce that (\ref{f13}) is a trivial extension.
\end{proof}

To better characterize the classification procedure of nontrivial extensions, we advance part of the computation in the following lemmas.

\begin{lemma}\label{r2}
	The solutions of the equation
	\begin{align}\label{r2e}
	c(\partial,\lambda)b(\mu)+c(\partial+\lambda,\mu)a(\lambda)-c(\partial,\mu)b(\lambda)-c(\partial+\mu,\lambda)a(\mu)=0
	\end{align} for unknown polynomial $c(\partial,\lambda)\in\mathbb{C}[\partial,\lambda]$ are given as follows.
	\begin{itemize}
		\item If $a(\lambda)=b(\lambda)=0$, the equation holds for any polynomial in $\mathbb{C}[\partial,\lambda]$.
		\item If $a(\lambda)=b(\lambda)\neq0$, then $c(\partial,\lambda)= a(\lambda)(\varphi_1(\partial+\lambda)-\varphi_1(\partial))+\varphi_2(\lambda)$, where $\varphi_1,\varphi_2$ are polynomials.
		\item If $a(\lambda)\neq b(\lambda)$, then $c(\partial,\lambda)= a(\lambda)\varphi(\partial+\lambda)-b(\lambda)\varphi(\partial)$ for some polynomial $\varphi$.
	\end{itemize}
\end{lemma}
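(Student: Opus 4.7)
The plan is to verify that the claimed forms solve~\eqref{r2e} and then prove uniqueness by a descending induction on $N := \deg_{\partial} c$. Case 1 ($a(\lambda) = b(\lambda) = 0$) is immediate. For Cases 2 and 3 the verification is a one-line expansion: upon substituting $c(\partial,\lambda) = a(\lambda)\varphi(\partial+\lambda) - b(\lambda)\varphi(\partial)$ into~\eqref{r2e}, the four resulting terms telescope to zero, and in Case 2 the purely $\lambda$-dependent piece $\varphi_2(\lambda)$ is also a solution because $a = b$ kills the remaining antisymmetric combination.

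For uniqueness in Case 3 ($a \neq b$), I will write $c(\partial,\lambda) = c_N(\lambda)\partial^N + \text{lower order}$ and read off the coefficient of $\partial^N$ in~\eqref{r2e}; this yields $c_N(\lambda)(b(\mu) - a(\mu)) = c_N(\mu)(b(\lambda) - a(\lambda))$, and since $q := a - b \neq 0$, Lemma~\ref{r1} forces $c_N(\lambda) = k_N\, q(\lambda)$ for some $k_N \in \C$. The coboundary $a(\lambda)\varphi^{(N)}(\partial+\lambda) - b(\lambda)\varphi^{(N)}(\partial)$ with $\varphi^{(N)}(\partial) := k_N\partial^N$ shares this leading coefficient, so subtracting it produces a solution of strictly smaller $\partial$-degree. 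Iterating down to the base case $N = 0$ (which just reads $c_0 = k q = a\cdot k - b\cdot k$) assembles the desired polynomial $\varphi$ as the sum $\sum_N \varphi^{(N)}$.

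For uniqueness in Case 2 ($a = b \neq 0$), the coefficient of $\partial^N$ in~\eqref{r2e} vanishes identically, so the decisive information comes from the coefficient of $\partial^{N-1}$. A short expansion of $(\partial+\lambda)^N$ and $(\partial+\mu)^N$ simplifies this coefficient to $N\bigl[c_N(\mu)\lambda a(\lambda) - c_N(\lambda)\mu a(\mu)\bigr]$. For $N \geq 1$, Lemma~\ref{r1} then forces $c_N(\lambda) = k_N\,\lambda a(\lambda)$, and the coboundary $a(\lambda)[\varphi^{(N)}(\partial+\lambda) - \varphi^{(N)}(\partial)]$ with $\varphi^{(N)}(\partial) := k_N\partial^{N+1}/(N+1)$ has precisely this leading coefficient; subtracting again lowers the $\partial$-degree. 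The recursion terminates at $N = 0$, where~\eqref{r2e} is trivially satisfied by any $c_0(\lambda)$, and this residual freedom is exactly the summand $\varphi_2(\lambda)$ in the asserted form.

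The step I expect to be the main obstacle is the $\partial^{N-1}$-coefficient computation in Case 2: one must expand $(\partial+\lambda)^N$ and $(\partial+\mu)^N$ carefully and check that all contributions involving $c_{N-1}$ cancel so that only the stated multiple of $c_N$ remains, which is what makes Lemma~\ref{r1} applicable. Once that cancellation is verified, the rest of the argument is a clean descent on $\deg_{\partial} c$ driven by Lemma~\ref{r1}.
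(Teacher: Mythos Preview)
Your proof is correct and follows essentially the same strategy as the paper's: both arguments proceed by induction (descent) on $\deg_\partial c$, identify the leading coefficient $c_N(\lambda)$ via Lemma~\ref{r1} (in Case~3 from the $\partial^N$-coefficient, in Case~2 from the $\partial^{N-1}$-coefficient, where the paper uses the equivalent rearranged form $(c(\partial+\lambda,\mu)-c(\partial,\mu))a(\lambda)=(c(\partial+\mu,\lambda)-c(\partial,\lambda))a(\mu)$), then subtract off the appropriate coboundary to lower the degree. The $\partial^{N-1}$-cancellation you flagged as the key step is exactly what the paper's computation establishes, so your anticipated obstacle is handled and the argument goes through.
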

\begin{proof}
	If $a(\lambda)=b(\lambda)=0$ or $c(\partial,\lambda)=0$, the result is obvious. 
	
	Now we assume $a(\lambda)=b(\lambda)\neq0$, $c(\partial,\lambda)\neq 0$, and $c(\partial,\lambda)=\sum_{i=0}^{m}c_i(\lambda)\partial^i$ with $c_m(\lambda)\neq0$. The result can be obtained by induction on $m$. When $m=0$, the variation of (\ref{r2e})
	\begin{align}\label{r2ve}
	(c(\partial+\lambda,\mu)-c(\partial,\mu))a(\lambda)=(c(\partial+\mu,\lambda)-c(\partial,\lambda))a(\mu)
	\end{align} implies the original equation is established. Assume the conclusion holds for $m=n$ ($n\geq0$) and consider the case that $m=n+1$. Comparing the coefficients of $\partial^n$ of (\ref{r2ve}), we have $$\lambda c_{n+1}(\mu)a(\lambda)=\mu c_{n+1}(\lambda)a(\mu).$$ So $c_{n+1}(\lambda)=k\lambda a(\lambda)$ for some nonzero constant $k$ by Lemma \ref{r1}. Let $k_{n+1}=\frac{k}{n+2},d(\partial,\lambda)=k_{n+1}a(\lambda)((\partial+\lambda)^{n+2}-\partial^{n+2})$ and $e(\partial,\lambda)=c(\partial,\lambda)-d(\partial,\lambda)$. Since both $c$ and $d$ satisfy (\ref{r2ve}), by induction, $e(\partial,\lambda)=a(\lambda)(\varphi_0(\partial+\lambda)-\varphi_0(\partial))+\varphi_2(\lambda)$ with polynomials $\varphi_0,\varphi_2$. Set $\varphi_1(\partial)=\varphi_0(\partial)+k_{n+1}\partial^{n+2}$, and the expression of $c(\partial,\lambda)$ follows.
	
	For $a(\lambda)\neq b(\lambda)$, we set $c(\partial,\lambda)=\sum_{i=0}^{m}c_i(\lambda)\partial^i$ with $c_m(\lambda)\neq0$. If $m=0$, the equation (\ref{r2e}) can be rewritten as $$c_0(\lambda)(a(\mu)-b(\mu))-c_0(\mu)(a(\lambda)-b(\lambda))=0,$$ which means $c(\partial,\lambda)=k(a(\lambda)-b(\lambda))$ for some nonzero constant $k$. Thus the conclusion holds for $m=0$. Assume the conclusion holds for $m=n$ ($n\geq0$) and consider the case that $m=n+1$. The coefficients of $\partial^{n+1}$ of the two sides of (\ref{r2e}) imply $c_{n+1}(\lambda)=t(a(\lambda)-b(\lambda))$ with $t\neq0$. Let $\varphi_1(\partial)=t\partial^{n+1}$, $d(\partial,\lambda)=a(\lambda)\varphi_1(\partial+\lambda)-b(\lambda)\varphi_1(\partial)$ and $e(\partial,\lambda)=c(\partial,\lambda)-d(\partial,\lambda)$. Then $d(\partial,\lambda)$ satisfies (\ref{r2e}), and so does $e(\partial,\lambda)$. By the assumption, we have $e(\partial,\lambda)= a(\lambda)\varphi_2(\partial+\lambda)-b(\lambda)\varphi_2(\partial)$ for some polynomial $\varphi_2$. Setting $\varphi(\partial)=\varphi_1(\partial)+\varphi_2(\partial)$, one can find that the conclusion also holds for $m=n+1$.
\end{proof}
\begin{lemma}\label{lq}
	Let $f(\lambda,\mu)$ be a nonzero polynomial in $\mathbb{C}[\lambda,\mu]$ satisfying $f(\lambda,\mu)=-f(\mu,\lambda)$. Denote the coefficient of $\lambda^i\mu^j$ in $f$ by $f_{ij}$ and the antisymmetric matrix consisting of $f_{ij}$'s by $M$. Assume $f_{i_0,j_0}\neq0$ ($i_0<j_0$). Then the following statements are equivalent.
	\begin{enumerate}[(i)]
		\item There exist polynomials $P_1,P_2$ such that \begin{align}\label{ld1}
		P_1(\lambda)P_2(\mu)-P_1(\mu)P_2(\lambda)=f(\lambda,\mu).
		\end{align}
		\item For any $i,j,k,l$, $f_{ij}f_{kl}-f_{ik}f_{jl}+f_{jk}f_{il}=0$.
		\item $rank(M)=2$.
	\end{enumerate} 
\end{lemma}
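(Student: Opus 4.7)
I will prove the equivalence by the cycle $(\mathrm{i})\Rightarrow(\mathrm{ii})\Rightarrow(\mathrm{iii})\Rightarrow(\mathrm{i})$, using the distinguished nonzero entry $f_{i_0,j_0}$ as a pivot throughout. Since $f(\lambda,\mu)$ is a polynomial, the antisymmetric matrix $M=(f_{ij})$ has finite support, so the entire argument is finite-dimensional.

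For $(\mathrm{i})\Rightarrow(\mathrm{ii})$, I would expand $P_1(\lambda)=\sum_i p_i\lambda^i$ and $P_2(\lambda)=\sum_j q_j\lambda^j$, obtaining $f_{ij}=p_iq_j-p_jq_i$. Substituting this into $f_{ij}f_{kl}-f_{ik}f_{jl}+f_{jk}f_{il}$ produces twelve monomials in the $p$'s and $q$'s which cancel in pairs. Equivalently, the expression is the Laplace expansion of a $3\times 3$ determinant with two identical rows, hence vanishes.

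For $(\mathrm{ii})\Rightarrow(\mathrm{iii})$, I would apply (ii) with the first two indices equal to $(i_0,j_0)$ and arbitrary $k,l$ to get
\begin{equation*}
f_{i_0j_0}\,f_{kl}=f_{i_0k}\,f_{j_0l}-f_{j_0k}\,f_{i_0l}.
\end{equation*}
Dividing by $f_{i_0j_0}$ exhibits every column of $M$ as a fixed linear combination of columns $i_0$ and $j_0$, whence $\operatorname{rank}(M)\le 2$. On the other hand, the $2\times 2$ submatrix of $M$ indexed by $\{i_0,j_0\}$ has determinant $-f_{i_0j_0}^2\ne 0$ by antisymmetry, so $\operatorname{rank}(M)\ge 2$.

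For $(\mathrm{iii})\Rightarrow(\mathrm{i})$, I would use that $M$ has rank $2$ with columns $i_0,j_0$ linearly independent to write column $k$ of $M$ as $\alpha_k(\text{column }i_0)+\beta_k(\text{column }j_0)$. Reading off rows $i_0$ and $j_0$, the vanishing diagonal entries $f_{i_0,i_0}=f_{j_0,j_0}=0$ together with antisymmetry pin down $\alpha_k=-f_{j_0,k}/f_{i_0,j_0}$ and $\beta_k=f_{i_0,k}/f_{i_0,j_0}$, yielding the explicit formula
\begin{equation*}
f_{lk}=\frac{f_{l,j_0}\,f_{i_0,k}-f_{k,j_0}\,f_{i_0,l}}{f_{i_0,j_0}}.
\end{equation*}
Setting $P_1(\lambda):=\sum_l f_{l,j_0}\lambda^l$ and $P_2(\lambda):=f_{i_0,j_0}^{-1}\sum_k f_{i_0,k}\lambda^k$, a direct computation shows that the coefficient of $\lambda^l\mu^k$ in $P_1(\lambda)P_2(\mu)-P_1(\mu)P_2(\lambda)$ equals precisely this quantity, so \eqref{ld1} is verified.

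The main obstacle is the last step: one has to translate the abstract rank condition into an explicit formula for the $f_{lk}$ before the polynomial factorisation becomes visible. It is the zero diagonal and antisymmetry of $M$ that make the coefficients $\alpha_k,\beta_k$ depend only on the pivot row $f_{i_0,\cdot}$ and pivot column $f_{\cdot,j_0}$, and thereby assemble into the clean product $p_lq_k-p_kq_l$ with $p_l=f_{l,j_0}$, $q_k=f_{i_0,k}/f_{i_0,j_0}$. The first two implications are essentially routine once the Plücker-type identity in (ii) is recognised as a $3\times 3$ determinant and the pivot entry is exploited systematically.
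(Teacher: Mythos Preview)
Your proof is correct and follows the same cycle $(\mathrm{i})\Rightarrow(\mathrm{ii})\Rightarrow(\mathrm{iii})\Rightarrow(\mathrm{i})$ as the paper. The arguments for $(\mathrm{i})\Rightarrow(\mathrm{ii})$ are essentially identical. For $(\mathrm{ii})\Rightarrow(\mathrm{iii})$, the paper performs explicit simultaneous row and column operations on $M$ using the pivot $f_{i_0,j_0}$ to reduce it to a block-diagonal form with a single nonzero $2\times 2$ block, while you argue directly that every column lies in the span of columns $i_0$ and $j_0$; these are two presentations of the same idea. The only genuine difference is in $(\mathrm{iii})\Rightarrow(\mathrm{i})$: the paper invokes the congruence normal form of a rank-$2$ antisymmetric matrix (the existence of an invertible $P$ with $M=P\,\diag\{J,0,\dots,0\}\,P^{T}$) and reads $P_1,P_2$ off the first two columns of $P$, whereas you bypass this structural fact by solving for the column coefficients $\alpha_k,\beta_k$ directly from the zero diagonal and antisymmetry. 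Your route is slightly more elementary and self-contained, and it lands precisely on the explicit pair $P_1(\lambda)=\sum_k f_{k,j_0}\lambda^k$, $P_2(\lambda)=-f_{i_0,j_0}^{-1}\sum_k f_{k,i_0}\lambda^k$ that the paper records immediately after its proof and then uses in the subsequent lemma.
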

\begin{proof}
	(i)$\Rightarrow$(ii). Assume that (\ref{ld1}) has been established and the expressions of $P_1,P_2$ are given by $P_1(\lambda)=\sum_{i=0}^np_{i1}\lambda^i,\ P_2(\lambda)=\sum_{i=0}^np_{i2}\lambda^i.$ Let $\mathbf{P}_1=(	p_{01},p_{11},\cdots,p_{n1})^T$ and $\mathbf{P}_2=(	p_{02},p_{12},\cdots,p_{n2})^T$. Then (ii) follows from $f_{ij}=p_{i1}p_{j2}-p_{j1}p_{i2},\forall i,j$.
	
	(ii)$\Rightarrow$(iii). Assume (ii) and perform the following elementary row and column transformations on $M$ as follows.
	\begin{align*}
	M\xrightarrow[\mathbf{c}_k-\frac{f_{k-1,j_0}}{f_{i_0,j_0}}\mathbf{c}_{{i_0+1}}+\frac{f_{k-1,i_0}}{f_{i_0,j_0}}\mathbf{c}_{{j_0+1}},\forall k\neq i_0+1, j_0+1]{\mathbf{r}_k-\frac{f_{k-1,j_0}}{f_{i_0,j_0}}\mathbf{r}_{{i_0+1}}+\frac{f_{k-1,i_0}}{f_{i_0,j_0}}\mathbf{r}_{{j_0+1}},\forall k\neq i_0+1, j_0+1}\xrightarrow[\mathbf{c}_{i_0+1}\leftrightarrow\mathbf{c}_1]{\mathbf{r}_{i_0+1}\leftrightarrow\mathbf{r}_1}\xrightarrow[\mathbf{c}_{j_0+1}\leftrightarrow\mathbf{c}_2]{\mathbf{r}_{j_0+1}\leftrightarrow\mathbf{r}_2}diag\left\{\begin{pmatrix}
	0&f_{i_0,j_0}\\
	-f_{i_0,j_0}&0
	\end{pmatrix},0,\cdots,0\right\}.
	\end{align*} Thus $rank(M)=2$.
	
	(iii)$\Rightarrow$(i). Assume that $rank(M)=2$ and denote the order of $M$ by $m$ ($m\geq2$). Then there exists an invertible matrix $P=\{q_{ij}\}$ of order $m$ such that $$M=Pdiag\left\{\begin{pmatrix}
	0&1\\
	-1&0
	\end{pmatrix},0,\cdots,0\right\}P^T.$$ Define $P_1(\lambda)=\sum_{i=0}^{m-1}q_{i+1,1}\lambda^i,\ P_2(\lambda)=\sum_{i=0}^{m-1}q_{i+1,2}\lambda^i$. It is easy to check that $P_1,P_2$ satisfy \ref{ld1}.
\end{proof}

	For $f(\lambda,\mu)$ meeting the condition in Lemma \ref{lq}, based on the proof of (ii)$\Rightarrow$(iii) and (iii)$\Rightarrow$(i), we can write down a pair of polynomials $P_1,P_2$ satisfying (\ref{ld1}) as follows: \begin{align}\label{ld2}
	P_1(\lambda)=\sum_kf_{k,j_0}\lambda^k,\ P_2(\lambda)=-\frac{1}{f_{i_0,j_0}}\sum_kf_{k,i_0}\lambda^k.
	\end{align} With the above discussion, we can solve the following equation. 

\begin{lemma}\label{lq1}
	Let $f(\lambda,\mu)$ be a polynomial satisfying the condition in Lemma \ref{lq} with $P_1,P_2$ defined in (\ref{ld2}). The equation
	\begin{align}\label{ld3}
		a(\lambda)b(\mu)-a(\mu)b(\lambda)=f(\lambda,\mu)
	\end{align} for unknown polynomial $b(\partial)\in\mathbb{C}[\partial]$ has solutions only when $a(\lambda)=\frac{a_{i_0}}{f_{i_0,j_0}}P_1(\lambda)+a_{j_0}P_2(\lambda)$ and not all $a_{i_0},a_{j_0}$ equal 0. If solutions do exist, they are given as follows.
	\begin{itemize}
		\item If $a_{i_0}\neq0$, then $b(\lambda)=ka(\lambda)+\frac{f_{i_0,j_0}}{a_{i_0}}P_2(\lambda)$, where $k\in\mathbb{C}$.
		\item If $a_{i_0}=0,a_{j_0}\neq0$, then $b(\lambda)=ka(\lambda)-\frac{1}{a_{j_0}}P_1(\lambda)$, where $k\in\mathbb{C}$.
	\end{itemize} 
\end{lemma}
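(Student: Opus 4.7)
The plan is to reduce (\ref{ld3}) to a small linear system by matching coefficients in $\mu$ on both sides, then to invert that system by Cramer's rule. Writing $a(\lambda)=\sum_k a_k\lambda^k$ and $b(\lambda)=\sum_k b_k\lambda^k$, I would extract the coefficients of $\mu^{j_0}$ and of $\mu^{i_0}$ from (\ref{ld3}); by the explicit formulas (\ref{ld2}), the right-hand sides of these two equations are exactly $P_1(\lambda)$ and $-f_{i_0,j_0}P_2(\lambda)$, respectively, producing
\begin{align*}
b_{j_0}\,a(\lambda)-a_{j_0}\,b(\lambda)&=P_1(\lambda),\\
b_{i_0}\,a(\lambda)-a_{i_0}\,b(\lambda)&=-f_{i_0,j_0}P_2(\lambda).
\end{align*}

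Next I would compare the single coefficient of $\lambda^{i_0}\mu^{j_0}$ in (\ref{ld3}) to obtain $a_{i_0}b_{j_0}-a_{j_0}b_{i_0}=f_{i_0,j_0}\neq 0$, which shows that the $2\times 2$ system just displayed is nonsingular. Cramer's rule then expresses both $a(\lambda)$ and $b(\lambda)$ as explicit $\mathbb{C}$-linear combinations of $P_1$ and $P_2$; in particular one reads off
$$a(\lambda)=\tfrac{a_{i_0}}{f_{i_0,j_0}}P_1(\lambda)+a_{j_0}P_2(\lambda),\qquad b(\lambda)=\tfrac{b_{i_0}}{f_{i_0,j_0}}P_1(\lambda)+b_{j_0}P_2(\lambda).$$
This gives the claimed necessary form of $a$; moreover $(a_{i_0},a_{j_0})\neq(0,0)$, since otherwise the first equation of the system would force $P_1\equiv 0$, contradicting $f\neq 0$.

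To finish, I would parametrize the admissible $b$'s using the single linear constraint $a_{i_0}b_{j_0}-a_{j_0}b_{i_0}=f_{i_0,j_0}$ on $(b_{i_0},b_{j_0})$. When $a_{i_0}\neq 0$, I plan to set $b_{i_0}=k\,a_{i_0}$ and solve for $b_{j_0}=f_{i_0,j_0}/a_{i_0}+k\,a_{j_0}$; substituting back into the Cramer expression for $b$ collapses one summand into $k\,a(\lambda)$ and leaves the residual $\tfrac{f_{i_0,j_0}}{a_{i_0}}P_2(\lambda)$, matching the first stated case. The case $a_{i_0}=0$, $a_{j_0}\neq 0$ is symmetric: the constraint forces $b_{i_0}=-f_{i_0,j_0}/a_{j_0}$ and $b_{j_0}=k\,a_{j_0}$ is free, after which the same substitution yields $k\,a(\lambda)-\tfrac{1}{a_{j_0}}P_1(\lambda)$. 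Sufficiency of both forms is a direct verification using (\ref{ld1}). The argument is essentially $2\times 2$ linear algebra once the coefficient extraction is carried out, so no genuine obstacle arises; the only care needed is book-keeping the signs produced by Cramer's rule and by the choice of the free parameter $k$.
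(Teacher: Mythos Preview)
Your argument is correct but takes a different route from the paper's. The paper first reduces to the special case where the coefficient matrix of $f$ is the standard rank-two skew form $\mathrm{diag}\{\bigl(\begin{smallmatrix}0&1\\-1&0\end{smallmatrix}\bigr),0,\dots,0\}$ (so $P_1=1$, $P_2=\lambda$, $i_0=0$, $j_0=1$), solves that case by writing out the system $a_ib_j-a_jb_i=f_{ij}$ explicitly, and then transports the answer back to the general case via the invertible change-of-basis matrix $P$ built in the proof of Lemma~\ref{lq}, using $\mathbf{a}'=P^{-1}\mathbf{a}$, $\mathbf{b}'=P^{-1}\mathbf{b}$. You instead stay in the general case throughout: extracting only the coefficients of $\mu^{j_0}$ and $\mu^{i_0}$ from (\ref{ld3}) already produces a $2\times2$ linear system in $a(\lambda)$ and $b(\lambda)$ whose determinant is exactly the $\lambda^{i_0}\mu^{j_0}$-coefficient $f_{i_0,j_0}\neq0$, and Cramer's rule together with the formulas (\ref{ld2}) immediately gives the claimed expressions. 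Your approach is shorter and avoids the auxiliary matrix $P$ altogether; the paper's approach is more structural, mirroring the reduction-to-canonical-form proof of Lemma~\ref{lq}. Both arguments ultimately rest on the same nondegeneracy $a_{i_0}b_{j_0}-a_{j_0}b_{i_0}=f_{i_0,j_0}$ and the decomposition (\ref{ld1}), and both need the same one-line sufficiency check at the end.
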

\begin{proof}
	First we consider the special case that the coefficient matrix $M$ of $f$ is $$diag\left\{\begin{pmatrix}
	0&1\\
	-1&0
	\end{pmatrix},0,\cdots,0\right\}.$$ In this case, $P_1(\lambda)=1,P_2(\lambda)=\lambda$.
	Denote the order of $M$ by $n+1$ and write $a(\lambda)=\sum_{i=0}^na_i\lambda^i,b(\lambda)=\sum_{i=0}^nb_i\lambda^i$. Taking them in (\ref{ld3}), we have \begin{align}\label{lde}
	\begin{cases}
	a_0b_1-a_1b_0=1,\\
	a_ib_j-a_jb_i=0, \forall i<j,(i,j)\neq(0,1).
	\end{cases}
	\end{align} So the equation has solutions only when not all $a_0,a_1$ are equal to 0. Assume the solutions do exist. If $a_0\neq0$, (\ref{lde}) implies $b_1=\frac{1+a_1b_0}{a_0},b_j=\frac{a_jb_0}{a_0},\forall j>1$ and then $a_j=0,\forall j>1$. Thus, $a(\lambda)=a_0+a_1\lambda$ and $b(\lambda)=b_0+\frac{1+a_1b_0}{a_0}\lambda$. If $a_0=0$, then (\ref{lde}) implies $a_1\neq0,b_0=-\frac{1}{a_1}$ and $a_j,b_j=0, \forall j>1$. Therefore, $a(\lambda)=a_1\lambda$ and $b(\lambda)=-\frac{1}{a_1}+b_1\lambda$.
	
	For the general case, we use $\mathbf{a}=(a_0,a_1,\cdots,a_n)^T,\mathbf{b}=(b_0,b_1,\cdots,b_n)^T$ to mean the coefficient matrices of $a(\lambda),b(\lambda)$ and $P$ to mean the invertible matrix corresponding to $P_1(\lambda),P_2(\lambda)$ as mentioned in the proof of (iii)$\Rightarrow$(i). Denote $\mathbf{a}'=P^{-1}\mathbf{a},\mathbf{b}'=P^{-1}\mathbf{b}$. Since $M=\mathbf{a}\mathbf{b}^T-\mathbf{b}\mathbf{a}^T$, we have $$diag\left\{\begin{pmatrix}
	0&1\\
	-1&0
	\end{pmatrix},0,\cdots,0\right\}=\mathbf{a}'{\mathbf{b}'}^T-\mathbf{b}'{\mathbf{a}'}^T.$$ By the above discussion, $\mathbf{b}'$ exists only when $\mathbf{a}'=({a'}_0,{a'}_1,0,\cdots,0)$ with ${a'}_0,{a'}_1$ not being 0 simultaneously, which is equivalent to  $a(\lambda)={a'}_0P_1(\lambda)+{a'}_1P_2(\lambda)$ with $({a'}_0,{a'}_1)\neq(0,0)$. Focusing on the coefficient of $\lambda^{i_0},\lambda^{j_0}$, we have ${a'}_0=\frac{a_{i0}}{f_{i_0,j_0}},{a'}_1=a_{j_0}$. Similarly, we can obtain the general expression of $b(\lambda)$ in different cases. 
\end{proof}
\begin{theorem}\label{t13}
	For solvable rank two Lie conformal algebra $\mathcal{R}$, nontrivial extensions of finite irreducible conformal modules of the form (\ref{f13}) always exist. Moreover, they are given (up to equivalence) by (\ref{se3}). The corresponding polynomials $\phi_A(\lambda),\phi_B(\lambda),\bar{\phi}_A(\lambda),\bar{\phi}_B(\lambda),f(\partial,\lambda)$ and $g(\partial,\lambda)$ giving rise to nontrivial extensions, are listed as follows:
	\begin{enumerate}
		\item In the case that $p(\lambda)=Q_1(\partial,\lambda)=0$.	\begin{enumerate}[(i)]
			\item If $\phi_A(\lambda)=\bar{\phi}_A(\lambda)=0,\phi_B(\lambda)=\bar{\phi}_B(\lambda)\neq0$, then $f(\partial,\lambda)=s(\lambda), g(\partial,\lambda)= t(\lambda)$, where $s,t$ are polynomials, and either $s\neq0$ or $t(\lambda)$ is not a scalar multiple of $\lambda\phi_B(\lambda)$.
			\item If $\phi_A(\lambda)=\bar{\phi}_A(\lambda)\neq0,\phi_B(\lambda)=\bar{\phi}_B(\lambda)=0$, then $f(\partial,\lambda)= s(\lambda), g(\partial,\lambda)=t(\lambda)$, where $s,t$ are polynomials, and either $t\neq0$ or $s(\lambda)$ is not a scalar multiple of $\lambda\phi_A(\lambda)$.
			\item If $\phi_A(\lambda)=\bar{\phi}_A(\lambda)\neq0,\phi_B(\lambda)=\bar{\phi}_B(\lambda)\neq0$, then $f(\partial,\lambda)= s(\lambda), g(\partial,\lambda)= t(\lambda)$, where $s,t$ are polynomials, and $s(\lambda),t(\lambda)$ are not the same scalar multiple of $\lambda\phi_A(\lambda),\lambda\phi_B(\lambda)$ respectively.
		\end{enumerate}
		\item In the case that $p(\lambda)=0,Q_1(\partial,\lambda)\neq0$, we always have $\phi_B(\lambda)=\bar{\phi}_B(\lambda)=0$. 
		\begin{enumerate}[(i)]
			\item If $\phi_A(\lambda)=\bar{\phi}_A(\lambda)\neq0, g(\partial,\lambda)= 0$, then $f(\partial,\lambda)= s(\lambda)$, where the polynomial $s(\lambda)$ is not a scalar multiple of $\lambda\phi_A(\lambda)$.
			\item If $\phi_A(\lambda)=\bar{\phi}_A(\lambda)\neq0, g(\partial,\lambda)=t(\lambda)\neq 0$ such that the coefficient matrix $M=\{q_{ij}\}$ of $Q_1(-\lambda-\mu,\lambda)t(\lambda+\mu)$ is of rank 2 and for $q_{i_0,j_0}\neq 0$, $\phi_A(\lambda)=\frac{a_{i0}}{f_{i_0,j_0}}(\sum_kf_{k,j_0}\lambda^k)-\frac{a_{j_0}}{f_{i_0,j_0}}(\sum_kf_{k,i_0}\lambda^k)$ with the coefficients $a_{i_0-1},a_{j_0-1}$ of $\lambda^{i_0-1},\lambda^{j_0-1}$ in $\phi_A$ are not all 0, then $$f(\partial,\lambda)=\begin{cases}
			-\frac{1}{a_{i_0-1}}(\sum_{k}q_{k,j_0}\lambda^k)\partial+s(\lambda),&\mbox{ if }a_{i_0-1}\neq0,\\
			-\frac{1}{a_{j_0-1}}(\sum_{k}q_{k,i_0}\lambda^k)\partial+s(\lambda),&\mbox{ if }a_{i_0-1}=0,
			\end{cases}$$ where $s$ is a polynomial.
		\end{enumerate}
		\item In the case that $p(\lambda)\neq0,Q_1(\partial,\lambda)=0$, we have $\phi_B(\lambda)=\bar{\phi}_B(\lambda)=0$.		\begin{enumerate}[(i)]
			\item If $\phi_A(\lambda)=\bar{\phi}_A(\lambda)\neq0$,  then $f(\partial,\lambda)= s(\lambda), g(\partial,\lambda)= 0$, where $s_1,s_2$ are polynomials, and $s(\lambda)$ is not a scalar multiple of $\lambda\phi_A(\lambda)$.
			\item If $\phi_A(\lambda)\neq\bar{\phi}_A(\lambda)$, then $f(\partial,\lambda)=0,\phi_A(\lambda)-\bar{\phi}_A(\lambda)=p(\lambda)$, and \begin{align*}
			g(\partial,\lambda)=\begin{cases}
			k_1(\partial+\frac{1}{r}\lambda)+k_2,&p(\lambda)=r\phi_A(\lambda)\mbox{ and }r\neq1,\\
			k_1,&p(\lambda)\mbox{ is not a scalar multiple of }\phi_A(\lambda),
			\end{cases}
			\end{align*} where $k_1,k_2\in\mathbb{C}$ and $g(\partial,\lambda)\neq0$.
		\end{enumerate}
	\end{enumerate}
The space of $Ext(V_{\bar{\phi}_A,\bar{\phi}_B},V_{\phi_A,\phi_B})$ is of infinite dimension in all of the above subcases but (3)-(ii).
\end{theorem}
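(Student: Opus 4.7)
The plan is to extract cocycle equations for $f(\partial,\lambda)$ and $g(\partial,\lambda)$ from the module identities (\ref{m11})--(\ref{m13}), split the analysis according to the vanishing pattern of $p(\lambda)$ and $Q_1(\partial,\lambda)$ (using $pQ_1=0$), and reduce modulo coboundaries via Lemma \ref{l13} by invoking the structural Lemmas \ref{r2}, \ref{lq}, and \ref{lq1}.

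First, applying (\ref{m11})--(\ref{m13}) to $\bar v$ and comparing the $v$-coefficients yields the three polynomial identities
\begin{align*}
&f(\partial,\lambda)\bar\phi_A(\mu)+f(\partial+\lambda,\mu)\phi_A(\lambda)-f(\partial,\mu)\bar\phi_A(\lambda)-f(\partial+\mu,\lambda)\phi_A(\mu)=Q_1(-\lambda-\mu,\lambda)g(\partial,\lambda+\mu),\\
&f(\partial,\lambda)\bar\phi_B(\mu)+g(\partial+\lambda,\mu)\phi_A(\lambda)-g(\partial,\mu)\bar\phi_A(\lambda)-f(\partial+\mu,\lambda)\phi_B(\mu)=p(\lambda)g(\partial,\lambda+\mu),\\
&g(\partial,\lambda)\bar\phi_B(\mu)+g(\partial+\lambda,\mu)\phi_B(\lambda)-g(\partial,\mu)\bar\phi_B(\lambda)-g(\partial+\mu,\lambda)\phi_B(\mu)=0,
\end{align*}
while the $\bar v$-coefficients merely recover $p\bar\phi_B=Q_1\bar\phi_B=0$, already encoded in Proposition \ref{m}.

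In Case 1 ($p=Q_1=0$), the first and third equations are instances of (\ref{r2e}) with $(a,b)=(\phi_A,\bar\phi_A)$ and $(\phi_B,\bar\phi_B)$ respectively. Lemma \ref{r2} forces $\phi_A=\bar\phi_A$ (resp.\ $\phi_B=\bar\phi_B$) whenever the corresponding $f$ (resp.\ $g$) has a non-coboundary part, and leaves a residual $\lambda$-only polynomial $s(\lambda)$ (resp.\ $t(\lambda)$); the middle equation is then automatically satisfied, and subtracting coboundaries via Lemma \ref{l13} produces the three subcases (1)(i)--(iii). In Case 2 ($p=0,Q_1\neq 0$), Proposition \ref{m} gives $\phi_B=\bar\phi_B=0$; the third equation is vacuous and setting $\lambda=0$ in the second forces $\phi_A=\bar\phi_A$ and $g(\partial,\lambda)=t(\lambda)$ independent of $\partial$. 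Feeding this into the first equation gives exactly the setup of Lemma \ref{lq1} for the skew-symmetric polynomial $Q_1(-\lambda-\mu,\lambda)t(\lambda+\mu)$, yielding the rank-two matrix condition and the explicit formulas in (2)(ii), with the $t=0$ branch giving (2)(i). Case 3 ($p\neq 0,Q_1=0$) proceeds analogously: the first equation is (\ref{r2e}) with zero RHS so Lemma \ref{r2} classifies $f$; setting $\mu=0$ in the second equation expresses $g(\partial,\lambda)$ in terms of $g(\partial,0)$ and forces $\phi_A-\bar\phi_A\in\C p(\lambda)$, producing (3)(i) when $\phi_A=\bar\phi_A$ and (3)(ii) after a coefficient-and-degree analysis otherwise.

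The main obstacles lie in Cases 2(ii) and 3(ii). In 2(ii) one must match the coefficient matrix of $Q_1(-\lambda-\mu,\lambda)t(\lambda+\mu)$ against the rank-two factorization of Lemma \ref{lq} and read off the admissible $\phi_A$ from the pivot entries $a_{i_0-1},a_{j_0-1}$, while separately bounding the $\partial$-dependence of $f$ to at most linear order via another application of Lemma \ref{r2}. In 3(ii) a careful degree argument on $g(\partial,\lambda)$ is needed to show that the non-proportional branch $\phi_A-\bar\phi_A=p(\lambda)\notin\C\phi_A$ produces only a constant $g$, while the proportional branch $p(\lambda)=r\phi_A(\lambda)$ with $r\neq 1$ allows the one-parameter linear-in-$\partial$ solution; this is what makes 3(ii) the unique subcase giving a finite-dimensional $Ext$ space, whereas all other subcases contribute infinitely many extensions through the free $\lambda$-polynomials $s,t$.
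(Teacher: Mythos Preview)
Your overall strategy matches the paper's: derive the three cocycle equations, split into Cases 1--3, and reduce modulo Lemma \ref{l13} using Lemma \ref{r2} (and Lemmas \ref{lq}, \ref{lq1} in Case 2). However, your handling of Case 1 contains a genuine gap.

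You claim that in Case 1 the middle equation is ``automatically satisfied'' once the first and third equations have been analyzed via Lemma \ref{r2}. This is not so: the middle equation is precisely what eliminates the \emph{mixed} subcases, e.g.\ $\phi_A=\bar\phi_A\neq0$ together with $\phi_B\neq\bar\phi_B$. In that situation Lemma \ref{r2} gives $f=\phi_A(\lambda)(s_1(\partial+\lambda)-s_1(\partial))+s_2(\lambda)$ from the first equation and $g=\phi_B(\lambda)t(\partial+\lambda)-\bar\phi_B(\lambda)t(\partial)$ from the third. Your argument would declare this non-trivial whenever $s_2\neq0$, since then $f$ is not a coboundary. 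But a coboundary in Lemma \ref{l13} uses a \emph{single} polynomial $\varphi$ for both $f$ and $g$, and the middle equation is exactly what forces $t-s_1$ to be linear and $s_2(\lambda)=r_1\lambda\phi_A(\lambda)$, after which $(f,g)$ becomes a coboundary with $\varphi=t$. The paper carries this out as its subcase (iv), and similarly disposes of $\phi_A\neq\bar\phi_A,\ \phi_B\neq\bar\phi_B$ in subcase (v). Without using the middle equation substantively here, your argument does not exclude these cases and so does not justify that only subcases (1)(i)--(iii) survive.

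Two smaller points: in Case 2, ``setting $\lambda=0$'' in the second equation only yields $\phi_A(0)=\bar\phi_A(0)$; the paper compares leading $\partial$-coefficients to obtain $\phi_A=\bar\phi_A$ and $\partial$-independence of $g$. In Case 3(ii), comparing leading $\partial$-coefficients gives the exact equality $\phi_A-\bar\phi_A=p(\lambda)$, not merely $\phi_A-\bar\phi_A\in\mathbb{C}p(\lambda)$ as you state. These are easily fixed, but the Case 1 issue requires the additional subcase analysis the paper provides.
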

\begin{proof}
	Applying both sides of (\ref{m11}), (\ref{m12}) and (\ref{m13}) to $\bar{v}$ and comparing the corresponding efficients, we obtain \begin{align}
	&Q_1(-\lambda-\mu,\lambda)\bar{\phi}_B(\lambda+\mu)=p(\lambda)\bar{\phi}_B(\lambda+\mu)=0,\label{te31}\\
	&f(\partial,\lambda)\bar{\phi}_A(\mu)+f(\partial+\lambda,\mu){\phi_A}(\lambda)-f(\partial,\mu)\bar{\phi}_A(\lambda)-f(\partial+\mu,\lambda){\phi_A}(\mu)=Q_1(-\lambda-\mu,\lambda)g(\partial,\lambda+\mu),\label{te32}\\
	&f(\partial,\lambda)\bar{\phi}_B(\mu)+g(\partial+\lambda,\mu){\phi_A}(\lambda)-g(\partial,\mu)\bar{\phi}_A(\lambda)-f(\partial+\mu,\lambda){\phi_B}(\mu)=p(\lambda)g(\partial,\lambda+\mu),\label{te33}\\
	&g(\partial,\lambda)\bar{\phi}_B(\mu)+g(\partial+\lambda,\mu){\phi_B}(\lambda)-g(\partial,\mu)\bar{\phi}_B(\lambda)-g(\partial+\mu,\lambda){\phi_B}(\mu)=0.\label{te34}
	\end{align}
	
	\textbf{Case 1.} $p(\lambda)=Q_1(\partial,\lambda)=0$.
	
	(i) If $\phi_A(\lambda)=\bar{\phi}_A(\lambda)=0,\phi_B(\lambda)=\bar{\phi}_B(\lambda)\neq0$, we can obtain $f(\partial,\lambda)\in\mathbb{C}[\lambda]$ from (\ref{te33}).
		And by (\ref{te34}) and Lemma \ref{r2}, we have $g(\partial,\lambda)= \phi_B(\lambda)(t_1(\partial+\lambda)-t_1(\partial))+t_2(\lambda)$ for some polynomials $t_1,t_2$. According to Lemma \ref{l13}, this extension is equivalent to the extension with the same $f(\partial,\lambda)$ and $g(\partial,\lambda)=t_2(\lambda)$, and is nontrivial only if $f(\partial,\lambda)\neq0$ or $t_2(\lambda)$ is not a scalar multiple of $\lambda\phi_B(\lambda)$.
		
	(ii) 	If $\phi_A(\lambda)=\bar{\phi}_A(\lambda)=0,\phi_B(\lambda)\neq\bar{\phi}_B(\lambda)$, (\ref{te33}) implies $f(\partial,\lambda)=0$ by comparing the coefficients of the highest order term with respect to $\partial$. Meanwhile, (\ref{te34}) means $g(\partial,\lambda)= \phi_B(\lambda)t(\partial+\lambda)-\bar{\phi}_B(\lambda)t(\partial)$ for some polynomial $t$ by Lemma \ref{r2}. In this subcase, the extension is trivial.
		
(iii) 	If $\phi_A(\lambda)=\bar{\phi}_A(\lambda)\neq0,\phi_B(\lambda)=\bar{\phi}_B(\lambda)\neq0$, we can deduce that $f(\partial,\lambda)= \phi_A(\lambda)(s_1(\partial+\lambda)-s_1(\partial))+s_2(\lambda), g(\partial,\lambda)= \phi_B(\lambda)(t_1(\partial+\lambda)-t_1(\partial))+t_2(\lambda)$ for some polynomials $s_1,s_2,t_1,t_2$ from (\ref{te32}) and (\ref{te34}). Taking them in (\ref{te33}) and setting $r(\lambda)=t_1(\lambda)-s_1(\lambda)$, we can obtain $$r(\partial+\lambda+\mu)-r(\partial+\lambda)-r(\partial+\mu)+r(\partial)=0,$$ which implies $r(\lambda)=r_1\lambda+r_0$ for some $r_0,r_1\in\mathbb{C}$. This extension is equivalent to the extension with $f(\partial,\lambda)=s_2(\lambda)$ and $g(\partial,\lambda)=t'(\lambda)$ where $t'(\lambda)=r_1\lambda\phi_B(\lambda)+t_2(\lambda)$. By Lemma \ref{l13}, that the extension is nontrivial requires that $s_2(\lambda),t'(\lambda)$ are not the same scalar multiple of $\lambda\phi_A(\lambda),\lambda\phi_B(\lambda)$ respectively.
	
	(iv) 	If $\phi_A(\lambda)=\bar{\phi}_A(\lambda)\neq0,\phi_B(\lambda)\neq\bar{\phi}_B(\lambda)$, then $f(\partial,\lambda)= \phi_A(\lambda)(s_1(\partial+\lambda)-s_1(\partial))+s_2(\lambda), g(\partial,\lambda)= \phi_B(\lambda)t(\partial+\lambda)-\bar{\phi}_B(\lambda)t(\partial)$ for some polynomials $s_1,s_2,t$ from (\ref{te32}) and (\ref{te34}).  Taking them in (\ref{te33}) and setting $r(\lambda)=t(\lambda)-s_1(\lambda)$, we can obtain $$\phi_A(\lambda)\phi_B(\mu)(r(\partial+\lambda+\mu)-r(\partial+\mu))-\phi_A(\lambda)\bar{\phi}_B(\mu)(r(\partial+\lambda)-r(\partial))=s_2(\lambda)(\phi_B(\mu)-\bar{\phi}_B(\mu)).$$ Denote the degree of $r(\lambda)$ by $m$. If $m\geq2$, comparing the coefficients of $\partial^{m-1}$ on each side of the above equation, we can get a contradiction. Let $r(\lambda)=r_1\lambda+r_0$ with $r_1,r_2\in\mathbb{C}$. Then we have $s_2(\lambda)=r_1\lambda\phi_A(\lambda)$. And the extension is always trivial in this subcase because $f(\partial,\lambda)=\phi_A(\lambda)(s_1(\partial+\lambda)-s_1(\partial))=\phi_A(\lambda)(t(\partial+\lambda)-t(\partial))$.
		
	(v) 	If $\phi_A(\lambda)\neq\bar{\phi}_A(\lambda),\phi_B(\lambda)\neq\bar{\phi}_B(\lambda)$, then $f(\partial,\lambda)= \phi_A(\lambda)s(\partial+\lambda)-\bar{\phi}_A(\lambda)s(\partial), g(\partial,\lambda)= \phi_B(\lambda)t(\partial+\lambda)-\bar{\phi}_B(\lambda)t(\partial)$ for some polynomials $s,t$. If $r(\partial)=s(\partial)-t(\partial)\neq0$, then $r(\partial)$ can be written as $r(\partial)=\sum_{i=0}^mr_i\partial^i$ with $r_m\neq0$. Taking them in (\ref{te33}) and considering the coefficients of $\partial^{m}$ give \begin{align*}
		r_m(\phi_A(\lambda)-\bar{\phi}_A(\lambda))(\bar{\phi}_B(\mu)-\phi_B(\mu))=0,
		\end{align*} which contradicts the assumption. So we can deduce that $s(\partial)=t(\partial)$ and thus the extension is trivial.

The other subcases can be learned from the symmetry of $(\phi_A,\bar{\phi}_A)$ and $(\phi_B,\bar{\phi}_B)$.

	\textbf{Case 2.} $p(\lambda)=0,Q_1(\partial,\lambda)\neq0$. In this case, ${\phi_B}(\lambda)=\bar{\phi}_B(\lambda)=0$ by (\ref{te31}) and then $\phi_A(\lambda)$ and $\bar{\phi}_A(\lambda)$ are nonzero polynomials. 
	
(i) If $\phi_A(\lambda)=\bar{\phi}_A(\lambda)\neq0$, then (\ref{te33}) implies $g(\partial,\lambda)=t(\lambda)\in\mathbb{C}[\lambda]$. Put it in (\ref{te32}) and take the partial derivative of both sides of the equation with respect to $\partial$, and we can obtain \begin{align}
		(f_\partial(\partial+\lambda,\mu)-f_\partial(\partial,\mu))\phi_A(\lambda)-(f_\partial(\partial+\mu,\lambda)-f_\partial(\partial,\lambda))\phi_A(\mu)=0.
		\end{align}
		By Lemma \ref{r2}, $f_\partial(\partial,\lambda)=\phi_A(\lambda)(v_1(\partial+\lambda)-v_1(\partial))+v_2(\lambda)$, where $v_1,v_2$ are polynomials. Let $s_1(\partial)=\int v_1(\partial) d\partial$, and then $$f(\partial,\lambda)=\int f_\partial(\partial,\lambda)d\partial=\phi_A(\lambda)(s_1(\partial+\lambda)-s_1(\partial))+v_2(\lambda)\partial+v_3(\lambda),$$ where $v_3$ is a polynomial. Taking this result in (\ref{te32}) again, one can get \begin{align}\label{ve33}
		\lambda\phi_A(\lambda)v_2(\mu)-\mu\phi_A(\mu)v_2(\lambda)=Q_1(-\lambda-\mu,\lambda)t(\lambda+\mu).
		\end{align} With this equation, we have the following two subceses.
		
		 If $t=0$, $v_2(\lambda)=k\lambda a(\lambda)$ for some constant $k$. Let ${s'}_1(\partial)=s_1(\partial)+\frac{k}{2}\partial^2$. Then $f(\partial,\lambda)=\phi_A(\lambda)({s'}_1(\partial+\lambda)-{s'}_1(\partial))+s_2(\lambda)$ for some polynomials ${s'}_1,s_2$. In this case, the extension is nontrivial only if $s_2$ is not a scalar multiple of $\lambda\phi_A(\lambda)$.
		 
		If $t\neq0$, then there exists $v_2(\lambda)$ satisfying (\ref{ve33}) only when $Q(\lambda,\mu)=Q_1(-\lambda-\mu,\lambda)t(\lambda+\mu),\lambda\phi_A(\lambda)$ meet the condition in Lemma \ref{lq} and \ref{lq1}. Under these conditions, we can give the expression of $v_2(\lambda)$ and then that of $f(\partial,\lambda)$. In this case, the extension is nontrivial.

(ii)	 If $\phi_A(\lambda)\neq\bar{\phi}_A(\lambda)$, then (\ref{te33}) implies $g(\partial,\lambda)=0$. So by (\ref{te32}) and Lemma \ref*{r2}, we have $f(\partial,\lambda)=\phi_A(\lambda)s(\partial+\lambda)-\bar{\phi}_A(\lambda)s(\partial)$ for some polynomial $s$. Thus the extension is trivial under the condition.

	\textbf{Case 3.} $p(\lambda)\neq0,Q_1(\partial,\lambda)=0$.   In this case, ${\phi_B}(\lambda)=\bar{\phi}_B(\lambda)=0$ by (\ref{te31}) and then $\phi_A(\lambda)$ and $\bar{\phi}_A(\lambda)$ are nonzero polynomials. 
	
(i) If $\phi_A(\lambda)=\bar{\phi}_A(\lambda)\neq0$, then $f(\partial,\lambda)=\phi_A(\lambda)(s_1(\partial+\lambda)-s_1(\partial))+s_2(\lambda)$ with polynomials $s_1,s_2$ by (\ref{te32}). Assume $g(\partial,\lambda)\neq0$. Comparing the coefficients of the highest item with respect to $\partial$ in (\ref{te33}), we get $p(\lambda)=0$, which contracts the given condition. So $g(\partial,\lambda)=0$ and the extension is nontrivial only when $s_2(\lambda)$ is not a scalar multiple of $\lambda\phi_A(\lambda)$.

		(ii) If $\phi_A(\lambda)\neq\bar{\phi}_A(\lambda)$, then $f(\partial,\lambda)=\phi_A(\lambda)s(\partial+\lambda)-\bar{\phi}_A(\lambda)s(\partial)$ with polynomial $s$ by (\ref{te32}). Assume $g(\partial,\lambda)\neq 0$. Let $g(\partial,\lambda)=\sum_{i=0}^{m}g_i(\lambda)\partial^i$. Comparing the coefficients of $\partial^m$ in (\ref{te33}), we have $p(\lambda)=\phi_A(\lambda)-\bar{\phi}_A(\lambda)$ and $g_m(\lambda)=k_1\in\mathbb{C}^\times$. If $m\geq1$, one can obtain $p(\lambda)=r\phi_A(\lambda)$ for some nonzero constant $r$ and $g_{m-1}(\lambda)=\frac{mk_1}{r}\lambda+k_2$ with $k_2\in\mathbb{C}$ by comparing the coefficients of $\partial^{m-1}$ in (\ref{te32}). If $m\geq2$, the coefficients of $\partial^{m-2}$ imply $r=1$ and then $\bar{\phi}_A(\lambda)=0$. Thus we get a contradiction. The extension is nontrivial if and only if $g(\partial,\lambda)\neq 0$.

\end{proof}
	\section{For rank two Lie conformal algebras that are of Type I}\label{s5}
	Let $\mathcal{R}$ be the conformal algebra defined in (\ref{a2}). Then there is a basis $\{A,B\}$ such that
	\begin{equation}
	[A_\lambda A]=(\partial+2\lambda)A,\ [A_\lambda B]=0,\ [B_\lambda B]=0.
	\end{equation}
	In this section, we deal with the extension problem over $\mathcal{R}$.
	If $V$ is a non-trivial finite irreducible $\mathcal{R}$-module, then either\begin{equation}
	V\cong V_{\alpha,\beta,\phi}=\mathbb{C}[\partial]v,\quad A_\lambda v=\delta_1(\partial+\alpha\lambda+\beta)v,\quad B_\lambda v=\delta_2\phi(\lambda)v,
	\end{equation}
	where $\delta_1,\delta_2\in\{0,1\},\delta_1^2+\delta_2^2=1,\beta,0\neq\alpha\in\mathbb{C}$, and $\phi$ is a nonzero polynomial.
	
	By definition \ref{d2}, the $\mathcal{R}$-module structure on $V$ given by $A_\lambda,B_\lambda\in End_\mathbb{C}(V)[\lambda]$ satisfies
	\begin{align}
	&[A_{\lambda},A_{\mu}]=(\lambda-\mu)A_{\lambda+\mu}, \label{m31}\\
	&[A_{\lambda},B_{\mu}]=0, \label{m32}\\
	&[B_{\lambda},B_{\mu}]=0, \label{m33}\\
	&[\partial,A_{\lambda}]=-\lambda A_{\lambda}, \label{m34}\\
	&[\partial,B_{\lambda}]=-\lambda B_{\lambda}. \label{m35}
	\end{align}

	\subsection{$0\longrightarrow \mathbb{C}c_{\eta} \longrightarrow  E\longrightarrow  V_{\alpha,\beta,\phi} \longrightarrow   0$}\hfill\\

	First, we consider extensions of finite irreducible $\mathcal{R}$-modules of the form \begin{align}\label{f31}
	0\longrightarrow \mathbb{C}c_{\eta} \longrightarrow  E\longrightarrow  V_{\alpha,\beta,\phi} \longrightarrow   0.
	\end{align}Then $E$ is isomorphic to $\mathbb{C}c_{\eta}\oplus V_{\alpha,\beta,\phi}=\mathbb{C}c_{\eta}\oplus \mathbb{C}[\partial]v$ as a vector space, and the following identities hold in $E$:
	\begin{align}\label{se7}
	&\mathcal{R}_\lambda c_{\eta}=0,\quad\partial c_{\eta}=\eta c_{\eta} \notag\\
	&A_\lambda v=\delta_1(\partial+\alpha\lambda+\beta) v+f(\lambda)c_{\eta},\quad B_\lambda v=\delta_2\phi(\lambda)v+g(\lambda)c_{\eta}, 
	\end{align}
	where $f(\lambda), g(\lambda) \in \mathbb{C}[\lambda]$.
	
	\begin{lemma}\label{l31}
	All trivial extensions of finite irreducible $\mathcal{R}$-modules of the form (\ref{f31}) are given by (\ref{se7}), and $f(\lambda)$ and $g(\lambda)$ are the same scalar multiple of $\delta_1(\alpha\lambda+\eta+\beta)$ and $\delta_2\phi(\lambda)$ respectively.
		
	\end{lemma}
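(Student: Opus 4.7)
The plan is to follow the same template used in Lemmas \ref{l21} and \ref{l11}, adapted to the module $V_{\alpha,\beta,\phi}$. I would argue by establishing both directions of the stated characterization. For the forward direction, I assume the extension (\ref{f31}) is trivial, which means there exists a vector $v' = kc_\eta + l(\partial)v \in E$ with $0 \neq l(\partial) \in \mathbb{C}[\partial]$ such that $\mathbb{C}[\partial]v'$ is an $\mathcal{R}$-submodule isomorphic to $V_{\alpha,\beta,\phi}$, i.e., $A_\lambda v' = \delta_1(\partial + \alpha\lambda + \beta)v'$ and $B_\lambda v' = \delta_2 \phi(\lambda) v'$.

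Next, I would compute both sides of these two equations in two ways: once by applying the stipulated module structure to $v'$, and once by expanding via (\ref{se7}) and the $\mathbb{C}[\partial]$-linearity of the actions. Comparing the $c_\eta$-components and $v$-components separately gives two pairs of polynomial identities. The $v$-coefficients read
\begin{align*}
\delta_1 l(\partial+\lambda)(\partial + \alpha\lambda + \beta) &= \delta_1 l(\partial)(\partial + \alpha\lambda + \beta),\\
\delta_2 \phi(\lambda) l(\partial+\lambda) &= \delta_2 \phi(\lambda) l(\partial).
\end{align*}
Since exactly one of $\delta_1, \delta_2$ equals $1$ and in either case the relevant factor ($\partial + \alpha\lambda + \beta$ or $\phi(\lambda)$) is nonzero, these identities force $l(\partial + \lambda) = l(\partial)$, so $l(\partial)$ must be a nonzero constant $l \in \mathbb{C}^\times$. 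Then the $c_\eta$-coefficients yield $l f(\lambda) = k \delta_1(\alpha\lambda + \eta + \beta)$ and $l g(\lambda) = k \delta_2 \phi(\lambda)$, so $f(\lambda)$ and $g(\lambda)$ are the same scalar multiple $k/l$ of $\delta_1(\alpha\lambda + \eta + \beta)$ and $\delta_2 \phi(\lambda)$ respectively.

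For the converse, given $f(\lambda) = k\delta_1(\alpha\lambda + \eta + \beta)$ and $g(\lambda) = k\delta_2\phi(\lambda)$ for some $k \in \mathbb{C}$, I would set $v' = kc_\eta + v$ and verify directly from (\ref{se7}) that $A_\lambda v' = \delta_1(\partial + \alpha\lambda + \beta)v'$ and $B_\lambda v' = \delta_2\phi(\lambda)v'$, exhibiting the required splitting.

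Since the overall architecture of the proof is identical to the earlier trivial-extension lemmas, I do not anticipate a genuine obstacle. The only point requiring mild care is to handle the two cases $(\delta_1, \delta_2) = (1,0)$ and $(0,1)$ uniformly; this is done by observing that in each case the nonvanishing factor on the $v$-side is itself a nonzero polynomial, which is enough to conclude that $l(\partial)$ collapses to a constant without having to analyze the cases separately.
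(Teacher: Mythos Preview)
Your proposal is correct and follows essentially the same approach as the paper's own proof: both argue by expanding $A_\lambda v'$ and $B_\lambda v'$ in two ways for a splitting vector $v'=kc_\eta+l(\partial)v$, compare coefficients to force $l(\partial)$ to be a nonzero constant, and then read off the scalar-multiple conditions on $f$ and $g$; the converse is handled identically by setting $v'=kc_\eta+v$. Your version merely spells out a bit more explicitly why the $v$-comparison forces $l(\partial+\lambda)=l(\partial)$ (using that exactly one $\delta_i$ equals $1$ and the corresponding factor is a nonzero polynomial), which is implicit in the paper.
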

	\begin{proof}
		Assume that (\ref{f31}) is a trivial extension, that is, there exists $v'=kc_\eta+l(\partial)v\in E$, where $k\in\mathbb{C}$ and $0\neq l(\partial)\in\mathbb{C}[\partial]$, such that \begin{align*}
		&A_\lambda v'=\delta_1(\partial+\alpha\lambda+\beta)v'=\delta_1k(\eta+\alpha\lambda+\beta)c_\eta+\delta_1l(\partial)(\partial+\alpha\lambda+\beta)v,\\
		&B_\lambda v'=\delta_2\phi(\lambda)v'=\delta_2k\phi(\lambda)c_\eta+\delta_2l(\partial)\phi(\lambda)v.
		\end{align*}
		
		On the other hand, it follows from (\ref{se7}) that
		\begin{align*}
		&A_\lambda v'=f(\lambda)l(\eta+\lambda)c_\eta+\delta_1 l(\partial+\lambda)(\partial+\alpha\lambda+\beta)v,\\
		&B_\lambda v'=g(\lambda)l(\eta+\lambda)c_\eta+\delta_2l(\partial+\lambda)\phi(\lambda)v.
		\end{align*}
		We can obtain that $l(\partial)$ is a nonzero constant by comparing both expressions for $A_\lambda v'$ and $B_\lambda v'$. Thus $f(\lambda)$ and $g(\lambda)$ are the same scalar multiple of $\delta_1(\alpha\lambda+\eta+\beta)$ and $\delta_2\phi(\lambda)$ respectively.
		
		Conversely, if $f(\lambda)=\delta_1k(\alpha\lambda+\eta+\beta)$ and $g(\lambda)=\delta_2k\phi(\lambda)$ for some $k\in\mathbb{C}$, setting $v'=kc_\eta +v$ we can deduce that (\ref{f31}) is a trivial extension.

	\end{proof}
	
	\begin{theorem}\label{t31}
		For rank two Lie conformal algebra $\mathcal{R}$ that is of Type I, nontrivial extensions of finite irreducible conformal modules of the form (\ref{f31}) exist only when $(\delta_1,\delta_2)=(1,0),\alpha\in\{1,2\},\beta+\eta=0$. Moreover, they are given (up to equivalence) by (\ref{se7}). The values of $\eta$, along with the corresponding polynomials $f(\lambda)$ and $g(\lambda)$ giving rise to nontrivial extensions, are listed as follows: $g(\lambda)=0$ and \begin{align*}
			f(\lambda)=\begin{cases}
			s_1\lambda^2,&\alpha=1,\\
			s_2\lambda^3,&\alpha=2,
			\end{cases}
			\end{align*}
			with nonzero constants $s_1,s_2$. In these cases, $dim(Ext(V_{\alpha,\beta,\phi},\mathbb{C}c_\eta))=1$.

	\end{theorem}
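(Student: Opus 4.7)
The plan is to derive polynomial cocycle conditions on $f(\lambda)$ and $g(\lambda)$ by applying the module identities (\ref{m31})--(\ref{m33}) to the cyclic vector $v\in V_{\alpha,\beta,\phi}$ and extracting coefficients of $v$ and $c_\eta$, using that $\mathcal{R}_\lambda c_\eta=0$ and $\partial c_\eta=\eta c_\eta$. One then splits into the two subcases $(\delta_1,\delta_2)=(1,0)$ and $(\delta_1,\delta_2)=(0,1)$ and compares with Lemma \ref{l31} to determine when the resulting cocycle is trivial.

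First I would apply $[A_\lambda,A_\mu]v=(\lambda-\mu)A_{\lambda+\mu}v$ and read off the $c_\eta$-coefficient; when $\delta_1=1$ this yields the Virasoro-type equation
\begin{equation*}
(\eta+\lambda+\alpha\mu+\beta)f(\lambda)-(\eta+\mu+\alpha\lambda+\beta)f(\mu)=(\lambda-\mu)f(\lambda+\mu),
\end{equation*}
while $\delta_1=0$ forces $f(\lambda)=0$ immediately. Next, applying $[A_\lambda,B_\mu]v=0$ and extracting the $c_\eta$-coefficient produces
\begin{equation*}
\delta_2\phi(\mu)f(\lambda)=\delta_1(\eta+\mu+\alpha\lambda+\beta)g(\mu),
\end{equation*}
and $[B_\lambda,B_\mu]v=0$ gives $\delta_2\bigl(\phi(\mu)g(\lambda)-\phi(\lambda)g(\mu)\bigr)=0$.

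In the subcase $(\delta_1,\delta_2)=(1,0)$, the second equation forces $g(\mu)=0$, and the first equation is precisely the cocycle equation whose polynomial solutions were classified in \cite[Proposition 2.1]{ckw1}; invoking that result gives the claimed nontrivial $f$ for $\alpha\in\{1,2\},\ \beta+\eta=0$, and triviality otherwise. In the subcase $(\delta_1,\delta_2)=(0,1)$, the $[A_\lambda,A_\mu]$-equation forces $f(\lambda)=0$, and the $[B_\lambda,B_\mu]$-equation together with Lemma \ref{r1} forces $g(\lambda)=k\phi(\lambda)$ for some scalar $k$, so by Lemma \ref{l31} every such extension is trivial. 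Uniqueness up to scalar and the dimension count $\dim(Ext(V_{\alpha,\beta,\phi},\mathbb{C}c_\eta))=1$ then follow from Lemma \ref{l31}. The only delicate point is the careful bookkeeping under the conformal sesquilinearity when $\partial$ lands on a term containing $c_\eta$ (it is replaced by the scalar $\eta$); the substantive analytic content, namely the classification of polynomial solutions to the Virasoro cocycle equation, is imported from \cite{ckw1} and is the step where the dichotomy $\alpha\in\{1,2\}$ arises.
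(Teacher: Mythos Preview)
Your proposal is correct and follows essentially the same approach as the paper: derive the three cocycle equations from (\ref{m31})--(\ref{m33}) applied to $v$, then split into the cases $(\delta_1,\delta_2)=(1,0)$ (where $g=0$ and the Virasoro result of \cite{ckw1} applies) and $(\delta_1,\delta_2)=(0,1)$ (where $f=0$ and Lemma~\ref{r1} forces $g=k\phi$, hence triviality by Lemma~\ref{l31}). The only cosmetic difference is that in the $(0,1)$ case you read off $f=0$ from the $[A_\lambda,A_\mu]$-equation, whereas the paper uses the $[A_\lambda,B_\mu]$-equation; both are immediate.
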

	\begin{proof}
		Applying both sides of (\ref{m31}), (\ref{m32}) and (\ref{m33}) to $v$ and comparing the corresponding efficients, we obtain \begin{align}
		&\delta_1(\eta+\lambda+\alpha\mu+\beta)f(\lambda)-\delta_1(\eta+\mu+\alpha\lambda+\beta)f(\mu)=(\lambda-\mu)f(\lambda+\mu),\label{te71}\\
		&\delta_2\phi(\mu)f(\lambda)-\delta_1(\eta+\mu+\alpha\lambda+\beta)g(\mu)=0,\label{te72}\\
		&\delta_2\phi(\mu)g(\lambda)-\delta_2\phi(\lambda)g(\mu)=0.\label{te73}
		\end{align}
		
		If $(\delta_1,\delta_2)=(1,0)$, (\ref{te72}) implies $g(\mu)=0$ and it reduces to the case of Virasoro conformal algebra. We can deduce the result by Proposition 2.1 in \cite{ckw1}. If $(\delta_1,\delta_2)=(0,1)$, then $f(\lambda)=0$ by (\ref{te72}). Applying Lemma \ref{r1} to (\ref{te73}), we have $g(\lambda)=k\phi(\lambda)$ for some constant $k$ and then the extension is trivial.
	\end{proof}

\subsection{$0\longrightarrow V_{\alpha,\beta,\phi} \longrightarrow  E\longrightarrow\mathbb{C}c_{\eta}   \longrightarrow   0$}\hfill\\

Next, we consider extensions of finite irreducible $\mathcal{R}$-modules of the form \begin{align}\label{f32}
0\longrightarrow V_{\alpha,\beta,\phi} \longrightarrow  E\longrightarrow\mathbb{C}c_{\eta}   \longrightarrow   0.
\end{align}Then $E$ is isomorphic to $V_{\alpha,\beta,\phi}\oplus\mathbb{C}c_{\eta} =\mathbb{C}[\partial]v\oplus\mathbb{C}c_{\eta} $ as a vector space, and the following identities hold in $E$:
\begin{align}\label{se8}
&A_\lambda v=\delta_1(\partial+\alpha\lambda+\beta)v,\quad B_\lambda v=\delta_2\phi(\lambda)v, \notag\\
&A_\lambda c_{\eta}=f(\partial,\lambda)v,\quad B_\lambda c_{\eta}=g(\partial,\lambda)v,\quad\partial c_{\eta}=\eta c_{\eta}+h(\partial)v, 
\end{align}
where $f(\partial,\lambda), g(\partial,\lambda), h(\partial)\in \mathbb{C}[\partial,\lambda]$.

\begin{lemma}\label{l32}
	All trivial extensions of finite irreducible $\mathcal{R}$-modules of the form (\ref{f32}) are given by (\ref{se8}), and $f(\partial,\lambda)=\delta_1\varphi(\partial+\lambda)(\partial+\alpha\lambda+\beta),g(\partial,\lambda)=\delta_2\varphi(\partial+\lambda)\phi(\lambda)$ and $h(\partial)=(\partial-\eta)\varphi(\partial)$, where $\varphi$ is a polynomial.
\end{lemma}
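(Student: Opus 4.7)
The plan is to parallel the arguments used for Lemma \ref{l12} and Lemma \ref{l22}, as the structural content of the statement is identical: triviality of an extension of a one-dimensional trivial module by $V_{\alpha,\beta,\phi}$ should be equivalent to the existence of a ``lifted'' copy of $\mathbb{C}c_\eta$ inside $E$. Concretely, I would begin by observing that (\ref{f32}) is trivial if and only if there exists $c'_\eta = k c_\eta + l(\partial)v \in E$ with $0 \neq k \in \mathbb{C}$ and $l(\partial) \in \mathbb{C}[\partial]$ satisfying $A_\lambda c'_\eta = 0$, $B_\lambda c'_\eta = 0$, and $\partial c'_\eta = \eta c'_\eta$.

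Next I would expand these three conditions using the defining relations (\ref{se8}). Applying $A_\lambda$ to $c'_\eta$ produces $\bigl(k f(\partial,\lambda) + \delta_1 l(\partial+\lambda)(\partial+\alpha\lambda+\beta)\bigr)v$; applying $B_\lambda$ produces $\bigl(k g(\partial,\lambda) + \delta_2 l(\partial+\lambda)\phi(\lambda)\bigr)v$; and applying $\partial$ produces $k\eta c_\eta + \bigl(k h(\partial) + \partial l(\partial)\bigr)v$, which must equal $k\eta c_\eta + \eta l(\partial)v$. Setting the $v$-coefficients to zero in each case and introducing $\varphi(\partial) := -k^{-1} l(\partial)$ immediately rearranges the three equations into exactly the asserted formulas $f(\partial,\lambda) = \delta_1 \varphi(\partial+\lambda)(\partial+\alpha\lambda+\beta)$, $g(\partial,\lambda) = \delta_2 \varphi(\partial+\lambda)\phi(\lambda)$, and $h(\partial) = (\partial - \eta)\varphi(\partial)$.

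For the converse direction, given any polynomial $\varphi$ for which $f, g, h$ have the stated shapes, I would set $c'_\eta := c_\eta - \varphi(\partial)v$ and verify directly from (\ref{se8}) that $A_\lambda c'_\eta = 0$, $B_\lambda c'_\eta = 0$, and $\partial c'_\eta = \eta c'_\eta$, thereby exhibiting an explicit splitting of the short exact sequence.

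I do not expect a serious obstacle here: the whole argument is essentially a substitution exercise, entirely analogous to Lemma \ref{l12}, the only change being that the scalar-valued module symbols $\phi_A(\lambda),\phi_B(\lambda)$ are replaced by the Type I data $\delta_1(\partial+\alpha\lambda+\beta)$ and $\delta_2\phi(\lambda)$. No nontrivial use of Lemma \ref{r1} or Lemma \ref{r2} is required at this stage, since the three cocycle-like equations decouple cleanly once the coefficient $l(\partial)$ is fixed, and the parametrization by a single polynomial $\varphi$ falls out automatically.
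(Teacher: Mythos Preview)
Your proposal is correct and follows essentially the same approach as the paper: assume triviality to get a lifted element $c'_\eta=kc_\eta+l(\partial)v$, expand the three conditions $A_\lambda c'_\eta=0$, $B_\lambda c'_\eta=0$, $\partial c'_\eta=\eta c'_\eta$ via (\ref{se8}), read off $f,g,h$ in terms of $\varphi=-k^{-1}l$, and then verify the converse by taking $c'_\eta=c_\eta-\varphi(\partial)v$.
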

\begin{proof}
	Assume that (\ref{f32}) is a trivial extension, that is, there exists $c'_\eta=kc_\eta+l(\partial)v\in E$, where $0\neq k\in\mathbb{C}$ and $ l(\partial)\in\mathbb{C}[\partial]$, such that $A_\lambda c'_\eta=B_\lambda c'_\eta=0$ and $\partial c'_{\eta}=\eta c'_{\eta}=k\eta c_\eta+\eta l(\partial)v$.
	
	On the other hand, it follows from (\ref{se8}) that
	\begin{align*}
	&A_\lambda c'_\eta=(kf(\partial,\lambda)+\delta_1l(\partial+\lambda)(\partial+\alpha\lambda+\beta))v,\\
	&B_\lambda c'_\eta=(kg(\partial,\lambda)+\delta_2l(\partial+\lambda)\phi(\lambda))v,\\
	&\partial c'_{\eta}=k\eta c_\eta+(kh(\partial)+\partial l(\partial))v.
	\end{align*}
	We can obtain the result by comparing both expressions for  $A_\lambda c'_\eta,B_\lambda c'_\eta$ and $\partial c'_\eta$.
	
	Conversely, if $f(\partial,\lambda)=\delta_1\varphi(\partial+\lambda)(\partial+\alpha\lambda+\beta),g(\partial,\lambda)=\delta_2\varphi(\partial+\lambda)\phi(\lambda)$ and $h(\partial)=(\partial-\eta)\varphi(\partial)$ for some polynomial $\varphi$, setting $c'_{\eta}=c_\eta -\varphi(\partial)v$, we can deduce that (\ref{f32}) is a trivial extension. 
\end{proof}

\begin{theorem}\label{t32}
	For rank two Lie conformal algebra $\mathcal{R}$ that is of Type I, nontrivial extensions of finite irreducible conformal modules of the form (\ref{f32}) exist only when $\delta_1=1,\alpha=1,\beta+\eta=0$. Moreover, the space of $Ext(\mathbb{C}c_\eta,V_{\alpha,\beta,\phi})$ is 1-dimensional, and the unique nontrivial extension is given (up to equivalence) as follows: $\delta_2=0,g(\partial,\lambda)=0$ and $f(\partial,\lambda)=h(\partial)=s$ with nonzero constant $s$. 		
\end{theorem}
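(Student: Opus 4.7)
The strategy follows the template established in the proofs of Theorems \ref{t22} and \ref{t12}: extract polynomial identities from the module axioms involving $\partial$, then combine with Lemma \ref{l32} to strip off coboundaries. I would start by applying (\ref{m34}) and (\ref{m35}) to $c_\eta$. Using the explicit actions in (\ref{se8}), a direct computation yields the two defining equations
\begin{align}
(\partial+\lambda-\eta)f(\partial,\lambda) &= \delta_1 h(\partial+\lambda)(\partial+\alpha\lambda+\beta),\label{prop-te81}\\
(\partial+\lambda-\eta)g(\partial,\lambda) &= \delta_2 h(\partial+\lambda)\phi(\lambda).\label{prop-te82}
\end{align}
Since $\delta_1^2+\delta_2^2=1$, the analysis splits into two cases.

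In the case $(\delta_1,\delta_2)=(0,1)$, equation (\ref{prop-te81}) forces $f(\partial,\lambda)=0$, and (\ref{prop-te82}) becomes $(\partial+\lambda-\eta)g(\partial,\lambda)=h(\partial+\lambda)\phi(\lambda)$. Substituting $\partial\mapsto \eta-\lambda$ and using $\phi(\lambda)\neq 0$ gives $h(\eta)=0$, so $h(\partial)=(\partial-\eta)\varphi(\partial)$ for some polynomial $\varphi$, whence $g(\partial,\lambda)=\varphi(\partial+\lambda)\phi(\lambda)$. By Lemma \ref{l32} every such extension is trivial, ruling out this case.

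In the case $(\delta_1,\delta_2)=(1,0)$, equation (\ref{prop-te82}) forces $g(\partial,\lambda)=0$, and (\ref{prop-te81}) becomes $(\partial+\lambda-\eta)f(\partial,\lambda)=h(\partial+\lambda)(\partial+\alpha\lambda+\beta)$. This is formally the same polynomial identity that appears in the proof of Theorem \ref{t22}(i) (and in the Virasoro case of \cite{ckw1}, Proposition 2.2), because the $B$-action decouples entirely. I would therefore either cite that result directly or redo the short argument: setting $\partial=\eta-\lambda$ gives $h(\eta)\bigl((\alpha-1)\lambda+\eta+\beta\bigr)=0$, so unless $\alpha=1$ and $\beta+\eta=0$ we get $h(\eta)=0$, hence $h(\partial)=(\partial-\eta)\varphi(\partial)$ and $f(\partial,\lambda)=\varphi(\partial+\lambda)(\partial+\alpha\lambda+\beta)$, which is trivial by Lemma \ref{l32}. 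When $\alpha=1$ and $\beta+\eta=0$, (\ref{prop-te81}) simplifies to $f(\partial,\lambda)=h(\partial+\lambda)$, and one checks (\ref{m31}) applied to $c_\eta$ is automatically satisfied; writing $h(\partial)=h(\eta)+(\partial-\eta)\tilde h(\partial)$ and applying Lemma \ref{l32} to the coboundary piece shows the extension is equivalent to the one with $h(\partial)=s:=h(\eta)$ constant and $f(\partial,\lambda)=s$, which is trivial precisely when $s=0$.

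The only mildly delicate step is verifying that no further relation is imposed by (\ref{m31})--(\ref{m33}) in the distinguished subcase $\alpha=1$, $\beta+\eta=0$; this is a short direct check once $f(\partial,\lambda)=h(\partial+\lambda)$ is substituted, and gives an identity of the form $h(\partial+\lambda+\mu)(\lambda-\mu)=(\lambda-\mu)h(\partial+\lambda+\mu)$, which is trivially true. Combined with the coboundary reduction, this yields $\dim Ext(\mathbb{C}c_\eta,V_{\alpha,\beta,\phi})=1$ with the stated representative, completing the proof.
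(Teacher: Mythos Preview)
Your proposal is correct and follows essentially the same approach as the paper: derive the two equations (\ref{te81})--(\ref{te82}) from the $\partial$-compatibility relations, split on $(\delta_1,\delta_2)$, and reduce the Virasoro-type case to the known result. The paper's proof is terser, simply citing Proposition~2.2 of \cite{ckw1} for the case $(\delta_1,\delta_2)=(1,0)$ rather than redoing the substitution $\partial=\eta-\lambda$ and the coboundary splitting of $h$ as you do, but the underlying argument is identical.
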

\begin{proof}
	Applying both sides of (\ref{m34}) and (\ref{m35})  to $c_\eta$ and comparing the corresponding efficients gives the following equations \begin{align}
	&(\partial+\lambda-\eta)f(\partial,\lambda)=\delta_1h(\partial+\lambda)(\partial+\alpha\lambda+\beta),\label{te81}\\
	&(\partial+\lambda-\eta)g(\partial,\lambda)=\delta_2h(\partial+\lambda)\phi(\lambda).\label{te82}
	\end{align}
	
	If $(\delta_1,\delta_2)=(1,0)$, then $g=0$ by (\ref{te82}) and the result can be deduced by Proposition 2.2 in \cite{ckw1}. If $(\delta_1,\delta_2)=(0,1)$, then $f=0$. (\ref{te82}) and Lemma \ref{l32} implies that the extension is trivial in this case.
\end{proof}
\subsection{$0\longrightarrow V_{\alpha,\beta,\phi} \longrightarrow  E\longrightarrow  V_{\bar{\alpha},\bar{\beta},\bar{\phi}} \longrightarrow   0$}\hfill\\

Finally, we consider extensions of finite irreducible $\mathcal{R}$-modules of the form \begin{align}\label{f33}
0\longrightarrow V_{\alpha,\beta,\phi} \longrightarrow  E\longrightarrow  V_{\bar{\alpha},\bar{\beta},\bar{\phi}} \longrightarrow   0.
\end{align}Then $E$ is isomorphic to $V_{\alpha,\beta,\phi} \oplus  V_{\bar{\alpha},\bar{\beta},\bar{\phi}} =\mathbb{C}[\partial]v\oplus\mathbb{C}[\partial]\bar{v} $ as a vector space, and the following identities hold in $E$:
\begin{align}\label{se9}
&A_\lambda v=\delta_1(\partial+\alpha\lambda+\beta)v,\quad B_\lambda v=\delta_2\phi(\lambda)v, \notag\\
&A_\lambda \bar{v}=\bar{\delta}_1(\partial+\bar{\alpha}\lambda+\bar{\beta})\bar{v}+f(\partial,\lambda)v,\quad B_\lambda \bar{v}=\bar{\delta}_2\bar{\phi}(\lambda)\bar{v}+g(\partial,\lambda)v, 
\end{align}
where $f(\partial,\lambda), g(\partial,\lambda)\in \mathbb{C}[\partial,\lambda]$.
\begin{lemma}\label{l33}
	All trivial extensions of finite irreducible $\mathcal{R}$-modules of the form (\ref{f33}) are given by (\ref{se9}), and $f(\partial,\lambda)=\delta_1\varphi(\partial+\lambda)(\partial+\alpha\lambda+\beta)-\bar{\delta}_1\varphi(\partial)(\partial+\bar{\alpha}\lambda+\bar{\beta})$ and $g(\partial,\lambda)=\delta_2\varphi(\partial+\lambda)\phi(\lambda)-\bar{\delta}_2\varphi(\partial)\bar{\phi}(\lambda)$ for some polynomial $\varphi$.
\end{lemma}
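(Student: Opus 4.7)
The plan is to mirror the arguments used in Lemma \ref{l13} and Lemma \ref{l23}, exploiting the fact that the module structure on $V_{\alpha,\beta,\phi}$ and $V_{\bar{\alpha},\bar{\beta},\bar{\phi}}$ is completely determined by the pairs $(\delta_1,\delta_2)$ and $(\bar{\delta}_1,\bar{\delta}_2)$. The general strategy: assume the extension splits, produce a generator $\bar{v}' = k(\partial)v + l(\partial)\bar{v}$ of a complementary irreducible submodule isomorphic to $V_{\bar{\alpha},\bar{\beta},\bar{\phi}}$, and derive the required form of $f,g$ by matching coefficients.

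First I would write $A_\lambda \bar{v}' = \bar{\delta}_1(\partial+\bar{\alpha}\lambda+\bar{\beta})\bar{v}'$ and $B_\lambda \bar{v}' = \bar{\delta}_2\bar{\phi}(\lambda)\bar{v}'$ in two ways: once directly using the module structure of $V_{\bar{\alpha},\bar{\beta},\bar{\phi}}$ acting on $\bar{v}'$, and once by expanding via (\ref{se9}). Collecting the $\bar{v}$-components of both expressions yields
\begin{align*}
\bar{\delta}_1 l(\partial+\lambda)(\partial+\bar{\alpha}\lambda+\bar{\beta}) &= \bar{\delta}_1 l(\partial)(\partial+\bar{\alpha}\lambda+\bar{\beta}),\\
\bar{\delta}_2 l(\partial+\lambda)\bar{\phi}(\lambda) &= \bar{\delta}_2 l(\partial)\bar{\phi}(\lambda).
\end{align*}
Since $\bar{\delta}_1^2+\bar{\delta}_2^2=1$, at least one of these forces $l(\partial+\lambda)=l(\partial)$, so $l(\partial)=l\in\mathbb{C}^\times$. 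Rescaling, I take $l=1$ and set $\varphi(\partial) := -k(\partial)$.

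Next I match the $v$-components. This produces
\begin{align*}
\delta_1 k(\partial+\lambda)(\partial+\alpha\lambda+\beta) + f(\partial,\lambda) &= \bar{\delta}_1 k(\partial)(\partial+\bar{\alpha}\lambda+\bar{\beta}),\\
\delta_2 k(\partial+\lambda)\phi(\lambda) + g(\partial,\lambda) &= \bar{\delta}_2 k(\partial)\bar{\phi}(\lambda),
\end{align*}
which rearranges, after substituting $k=-\varphi$, to exactly the claimed expressions for $f(\partial,\lambda)$ and $g(\partial,\lambda)$.

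For the converse, given $f,g$ of the stated form with polynomial $\varphi$, I set $\bar{v}' = -\varphi(\partial)v + \bar{v}$ and verify directly that $A_\lambda \bar{v}' = \bar{\delta}_1(\partial+\bar{\alpha}\lambda+\bar{\beta})\bar{v}'$ and $B_\lambda \bar{v}' = \bar{\delta}_2\bar{\phi}(\lambda)\bar{v}'$, which shows that $\mathbb{C}[\partial]\bar{v}'$ is a submodule isomorphic to $V_{\bar{\alpha},\bar{\beta},\bar{\phi}}$ complementary to $V_{\alpha,\beta,\phi}$, hence the extension is trivial. No step is genuinely delicate here; the only mild subtlety is making sure the deduction $l(\partial+\lambda)=l(\partial)$ uses the fact that $(\bar{\delta}_1,\bar{\delta}_2)\neq(0,0)$ together with $\bar{\phi}\neq 0$ in the case $\bar{\delta}_2=1$, but both are guaranteed by the definition of $V_{\bar{\alpha},\bar{\beta},\bar{\phi}}$ as a nontrivial irreducible module.
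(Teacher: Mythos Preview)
Your proof is correct and follows essentially the same approach as the paper's own argument: both take a splitting vector $\bar{v}'=k(\partial)v+l(\partial)\bar{v}$, compare the two expressions for $A_\lambda\bar{v}'$ and $B_\lambda\bar{v}'$ to force $l$ constant, read off $f$ and $g$, and then verify the converse with $\bar{v}'=-\varphi(\partial)v+\bar{v}$. Your version is in fact slightly more explicit than the paper's in justifying why $l(\partial+\lambda)=l(\partial)$, by invoking $(\bar\delta_1,\bar\delta_2)\neq(0,0)$ and $\bar\phi\neq 0$.
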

\begin{proof}
	Assume that (\ref{f33}) is a trivial extension, that is, there exists $\bar{v}'=k(\partial)v+l(\partial)\bar{v}\in E$, where $k(\partial),l(\partial)\in\mathbb{C}$ and $ l(\partial)\neq0$, such that \begin{align*}
	&A_\lambda \bar{v}'=\bar{\delta}_1(\partial+\bar{\alpha}\lambda+\bar{\beta})\bar{v}'=\bar{\delta}_1k(\partial)(\partial+\bar{\alpha}\lambda+\bar{\beta})v+\bar{\delta}_1l(\partial)(\partial+\bar{\alpha}\lambda+\bar{\beta})\bar{v},\\
	&B_\lambda \bar{v}'=\bar{\delta}_2\bar{\phi}(\lambda)\bar{v}'=\bar{\delta}_2k(\partial)\bar{\phi}(\lambda)v+\bar{\delta}_2l(\partial)\bar{\phi}(\lambda)\bar{v}.
	\end{align*}
	
	On the other hand, it follows from (\ref{se9}) that
	\begin{align*}
	&A_\lambda \bar{v}'=(\delta_1k(\partial+\lambda)(\partial+\alpha\lambda+\beta)+l(\partial+\lambda)f(\partial,\lambda))v+\bar{\delta}_1l(\partial+\lambda)(\partial+\bar{\alpha}\lambda+\bar{\beta})\bar{v},\\
	&B_\lambda \bar{v}'=(\delta_2k(\partial+\lambda)\phi(\lambda)+l(\partial+\lambda)g(\partial,\lambda))v+\bar{\delta}_2l(\partial+\lambda)\bar{\phi}(\lambda)\bar{v}.
	\end{align*}
	Comparing both expressions for $A_\lambda \bar{v}'$ and $B_\lambda \bar{v}'$, we can obtain that $l$ is a nonzero constant. And then we can give the expressions of $f(\partial,\lambda)$ and $g(\partial,\lambda)$.
	
	Conversely, if $f(\partial,\lambda)=\delta_1\varphi(\partial+\lambda)(\partial+\alpha\lambda+\beta)-\bar{\delta}_1\varphi(\partial)(\partial+\bar{\alpha}\lambda+\bar{\beta})$ and $g(\partial,\lambda)=\delta_2\varphi(\partial+\lambda)\phi(\lambda)-\bar{\delta}_2\varphi(\partial)\bar{\phi}(\lambda)$ for some polynomial $\varphi$, setting $\bar{v}'=-\varphi(\partial)v+\bar{v}$ we can deduce that (\ref{f33}) is a trivial extension.
\end{proof}

\begin{theorem}\label{t33}
	For rank two Lie conformal algebra $\mathcal{R}$ that is of Type I, nontrivial extensions of finite irreducible conformal modules of the form (\ref{f33}) exist only when $(\delta_1,\delta_2)=(\bar{\delta_1},\bar{\delta_2})$. Moreover, they are given (up to equivalence) by (\ref{se9}). The value of $\delta_i,\bar{\delta}_i,i=1,2,\alpha,\bar{\alpha},\beta,\bar{\beta},$ and the corresponding polynomials $\phi(\lambda),\bar{\phi}(\lambda),f(\partial,\lambda)$ and $g(\partial,\lambda)$ giving rise to nontrivial extensions, are listed as follows:
	\begin{enumerate}
		\item In the case that $(\delta_1,\delta_2)=(\bar{\delta_1},\bar{\delta_2})=(1,0)$, $g=0,\beta=\bar{\beta},\bar{\alpha}-\alpha\in\{0,1,2,3,4,5,6\},\alpha,\bar{\alpha}\neq0,$ and	\begin{enumerate}[(i)]
			\item $\bar{\alpha}=\alpha$, $f(\partial,\lambda)=s_0+s_1\lambda$, where $(s_0,s_1)\neq (0,0)$.
			\item $\bar{\alpha}-\alpha=2$, $f(\partial,\lambda)=s\lambda^2(2(\partial+\beta)+\lambda)$, where $s\neq0$.
			\item $\bar{\alpha}-\alpha=3$, $f(\partial,\lambda)=s(\partial+\beta)\lambda^2((\partial+\beta)+\lambda)$, where $s\neq0$.
			\item $\bar{\alpha}-\alpha=4$, $f(\partial,\lambda)=s\lambda^2(4(\partial+\beta)^3+6(\partial+\beta)^2\lambda-(\partial+\beta)\lambda^2+\alpha_1\lambda^3)$, where $s\neq0$.
			
			\item $\bar{\alpha}=1$ and $\alpha=-4$, $f(\partial,\lambda)=s((\partial+\beta)^4\lambda^2-10(\partial+\beta)^2\lambda^4-17(\partial+\beta)\lambda^5-8\lambda^6)$, where $s\neq0$.
			
			\item $\bar{\alpha}=\frac{7}{2}\pm\frac{\sqrt{19}}{2}$ and $\alpha=-\frac{5}{2}\pm\frac{\sqrt{19}}{2}$, $f(\partial,\lambda)=s((\partial+\beta)^4\lambda^3-(2\alpha+3)(\partial+\beta)^3\lambda^4-3\alpha(\partial+\beta)^2\lambda^5-(3\alpha+1)(\partial+\beta)\lambda^6-(\alpha+\frac{9}{28})\lambda^7)$, where $s\neq0$.
		\end{enumerate}
	The value of $dim(Ext(V_{\bar{\alpha},\bar{\beta},\bar{\phi}},V_{\alpha,\beta,\phi}))$ is 2 in subcase (i), and 1 in subcases (ii)-(vi).
		\item In the case that $(\delta_1,\delta_2)=(\bar{\delta_1},\bar{\delta_2})=(0,1)$,  $\phi(\lambda)=\bar{\phi}(\lambda),f(\partial,\lambda)=s(\lambda),g(\partial,\lambda)=t(\lambda)$ with polynomials $s,t$ and either $s(\lambda)\neq0$ or $t(\lambda)$ is not a scalar multiple of $\lambda\phi(\lambda)$. Then the space $Ext(V_{\bar{\alpha},\bar{\beta},\bar{\phi}},V_{\alpha,\beta,\phi})$ is infinite-dimensional.	
		
	\end{enumerate}
\end{theorem}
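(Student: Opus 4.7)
The strategy is to apply the operator identities (\ref{m31}), (\ref{m32}), (\ref{m33}) to $\bar v$ and extract the coefficient of $v$, which produces a system of cocycle equations governing the unknowns $f(\partial,\lambda)$ and $g(\partial,\lambda)$. Since each of $(\delta_1,\delta_2)$ and $(\bar\delta_1,\bar\delta_2)$ lies in $\{(1,0),(0,1)\}$, the analysis splits into four natural subcases, and triviality of cocycles is detected throughout via Lemma~\ref{l33}.

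For the matching case $(\delta_1,\delta_2)=(\bar\delta_1,\bar\delta_2)=(1,0)$, the $B$-action on both $v$ and $\bar v$ is identically zero, so the identities arising from (\ref{m32}) and (\ref{m33}) immediately force $g(\partial,\lambda)=0$, and the remaining identity from (\ref{m31}) is exactly the Virasoro cocycle equation. The six subcases (i)--(vi) of part (1), together with their dimension counts, then follow verbatim from Theorem~3.2 of~\cite{ckw1}.

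For the matching case $(\delta_1,\delta_2)=(\bar\delta_1,\bar\delta_2)=(0,1)$, the $A$-action collapses on $V_{\alpha,\beta,\phi}$, so (\ref{m31}) reduces to $(\lambda-\mu)f(\partial,\lambda+\mu)=0$, which pins down the $\partial$-dependence of $f$. The equation from (\ref{m32}) compares $\bar\phi(\mu)f(\partial,\lambda)$ with $\phi(\mu)f(\partial+\mu,\lambda)$, forcing $\phi=\bar\phi$ whenever a nonzero cocycle exists. The equation from (\ref{m33}) is then precisely of the form handled by Lemma~\ref{r2} with $a(\lambda)=b(\lambda)=\phi(\lambda)$, yielding $g(\partial,\lambda)=\phi(\lambda)(\varphi_1(\partial+\lambda)-\varphi_1(\partial))+t(\lambda)$ for some polynomials $\varphi_1,t$. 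Absorbing the $\varphi_1$-term as a coboundary via Lemma~\ref{l33} produces the normalized form $g(\partial,\lambda)=t(\lambda)$, and nontriviality is then characterized by $t(\lambda)$ not being a scalar multiple of $\lambda\phi(\lambda)$, giving part (2).

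The remaining mixed cases $((\delta_1,\delta_2),(\bar\delta_1,\bar\delta_2))=((1,0),(0,1))$ and $((0,1),(1,0))$ must be shown to yield only trivial extensions. For the first, Lemma~\ref{r1} applied to the $[B_\lambda,B_\mu]$-equation forces $g(\partial,\lambda)=h(\partial)\bar\phi(\lambda)$ for some $h\in\mathbb{C}[\partial]$, and the $[A_\lambda,B_\mu]$-equation then yields $f(\partial,\lambda)=-h(\partial+\lambda)(\partial+\alpha\lambda+\beta)$; comparison with Lemma~\ref{l33} using $\varphi=-h$ shows this cocycle is a coboundary, and a direct expansion confirms the $[A_\lambda,A_\mu]$-equation is then automatic. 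The symmetric analysis dispatches the other mixed case. The principal obstacle is keeping track of which coboundaries are available to normalize $g$ in part (2); apart from that, the argument is a careful but routine case analysis closely mirroring the proofs of Theorems~\ref{t13} and~\ref{t23}.
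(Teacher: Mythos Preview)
Your overall strategy matches the paper's: derive the cocycle equations by applying (\ref{m31})--(\ref{m33}) to $\bar v$, split into the four $(\delta,\bar\delta)$-cases, and in each invoke the appropriate combination of Lemmas~\ref{r1}, \ref{r2}, \ref{r3}, \ref{l33} together with the Virasoro result from~\cite{ckw1}. Your treatment of the two mixed cases is in fact slightly cleaner than the paper's (you read off $g$ from $[B_\lambda,B_\mu]$ and $f$ from $[A_\lambda,B_\mu]$, then verify $[A_\lambda,A_\mu]$; the paper extracts $f$ from $[A_\lambda,A_\mu]$ with $\mu=0$ first). Two of your justifications, however, are not correct as written.

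In the case $(\delta_1,\delta_2)=(\bar\delta_1,\bar\delta_2)=(1,0)$ you assert that ``the $B$-action on both $v$ and $\bar v$ is identically zero,'' but in the extension $E$ one has $B_\lambda\bar v = g(\partial,\lambda)v$, which is exactly what must be shown to vanish. The relation $[B_\lambda,B_\mu]\bar v=0$ is vacuous here (both sides are zero since $B_\lambda v=0$), so $g=0$ must come from $[A_\lambda,B_\mu]\bar v=0$, which reads
\[
g(\partial+\lambda,\mu)(\partial+\alpha\lambda+\beta)=g(\partial,\mu)(\partial+\mu+\bar\alpha\lambda+\bar\beta).
\]
This is precisely the hypothesis of Lemma~\ref{r3}, and the paper invokes that lemma (equivalently, set $\lambda=0$) to conclude $g=0$.

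In the case $(\delta_1,\delta_2)=(\bar\delta_1,\bar\delta_2)=(0,1)$, the equation $(\lambda-\mu)f(\partial,\lambda+\mu)=0$ does more than ``pin down the $\partial$-dependence of $f$'': it forces $f\equiv 0$ outright. Consequently the $[A_\lambda,B_\mu]$-equation $\bar\phi(\mu)f(\partial,\lambda)=\phi(\mu)f(\partial+\mu,\lambda)$, which you use to deduce $\phi=\bar\phi$, becomes $0=0$ and carries no information. The conclusion that nontrivial extensions require $\phi=\bar\phi$ instead follows from the step you perform afterwards: Lemma~\ref{r2} applied to the $[B_\lambda,B_\mu]$-equation shows that when $\phi\neq\bar\phi$ every solution has the form $g(\partial,\lambda)=\phi(\lambda)\varphi(\partial+\lambda)-\bar\phi(\lambda)\varphi(\partial)$, which is already a coboundary by Lemma~\ref{l33}. (The paper, for its part, argues via (\ref{te92}) rather than (\ref{te91}) here and therefore records only the weaker constraint $f\in\mathbb C[\lambda]$; this is why the theorem as stated allows $f=s(\lambda)$.)
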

\begin{proof}
	Applying both sides of (\ref{m31}), (\ref{m32}) and (\ref{m33}) to $\bar{v}$ and comparing the corresponding coefficients, we obtain \begin{align}
	&\bar{\delta}_1f(\partial,\lambda)(\partial+\lambda+\bar{\alpha}\mu+\bar{\beta})+\delta_1f(\partial+\lambda,\mu)(\partial+\alpha\lambda+\beta)\notag\\
	&\qquad-\bar{\delta}_1f(\partial,\mu)(\partial+\mu+\bar{\alpha}\lambda+\bar{\beta})-\delta_1f(\partial+\mu,\lambda)(\partial+\alpha\mu+\beta)=(\lambda-\mu)f(\partial,\lambda+\mu),\label{te91}\\
	&\bar{\delta}_2f(\partial,\lambda)\bar{\phi}(\mu)+\delta_1g(\partial+\lambda,\mu)(\partial+\alpha\lambda+\beta)\notag\\
	&\qquad-\bar{\delta}_1g(\partial,\mu)(\partial+\mu+\bar{\alpha}\lambda+\bar{\beta})-\delta_2f(\partial+\mu,\lambda)\phi(\mu)=0,\label{te92}\\
	&\bar{\delta}_2g(\partial,\lambda)\bar{\phi}(\mu)+\delta_2g(\partial+\lambda,\mu)\phi(\lambda)-\bar{\delta}_2g(\partial,\mu)\bar{\phi}(\lambda)-\delta_2g(\partial+\mu,\lambda)\phi(\mu)=0.\label{te93}
	\end{align}
	
	If $(\delta_1,\bar{\delta}_1,\delta_2,\bar{\delta}_2)=(1,1,0,0)$, the result can be deduced from Lemma \ref{r3}, Theorem 3.2 in \cite{ckw1} (or Theorem 2.7 in \cite{lhw2}) and Lemma \ref{l33}.
	
	If $(\delta_1,\bar{\delta}_1,\delta_2,\bar{\delta}_2)=(0,0,1,1)$, then (\ref{te92}) implies $f(\partial,\lambda)=0$ if $\phi(\lambda)\neq\bar{\phi}(\lambda)$ and $f(\partial,\lambda)=s(\lambda)$ for some polynomial $s$ if $\phi(\lambda)=\bar{\phi}(\lambda)$. Applying Lemma \ref{r2} to (\ref{te93}), we have $g(\partial,\lambda)=t(\partial+\lambda)\phi(\lambda)-t(\partial)\bar{\phi}(\lambda)$ for some polynomial $t$ if $\phi(\lambda)\neq\bar{\phi}(\lambda)$ and $g(\partial,\lambda)=(t_1(\partial+\lambda)-t_1(\partial))\phi(\lambda)+t_2(\lambda)$ for some polynomial $t_1,t_2$ if $\phi(\lambda)=\bar{\phi}(\lambda)$. By Lemma \ref{l33}, the extension is nontrivial only when $\phi=\bar{\phi}$ and either $s\neq0$ or $t_2(\lambda)$ is not a scalar multiple of $\lambda \phi(\lambda)$.
	
	If $(\delta_1,\bar{\delta}_1,\delta_2,\bar{\delta}_2)=(1,0,0,1)$, then putting $\mu=0$ in (\ref{te91}), we can obtain $$f(\partial+\lambda,0)(\partial+\alpha\lambda+\beta)=f(\partial,\lambda)(\partial+\lambda+\beta).$$ So when $\alpha=1$, we have $f(\partial,\lambda)=f(\partial+\lambda,0)=s(\partial+\lambda)$ for some polynomial $s$. If $\alpha\neq1$, then one can deduce that $f(\partial,\lambda)=s(\partial+\lambda)(\partial+\alpha\lambda+\beta)$ for some polynomial $s$. Applying Lemma \ref{r2} to (\ref{te93}), we have $g(\partial,\lambda)=t(\partial)\bar{\phi}(\lambda)
$, where $t$ is a polynomial. Putting these results in (\ref{te92}), we can obtain\begin{align}
	\begin{cases}
	s(\partial+\lambda)+t(\partial+\lambda)(\partial+\lambda+\beta)=0,&\alpha=1,\\
	s(\partial+\lambda)+t(\partial+\lambda)=0,&\alpha\neq1,\\
	\end{cases}	
	\end{align}
	The solutions are concluded as follows.
	\begin{enumerate}[(i)]
		\item If $\alpha=1$, then $f(\partial,\lambda)=-t(\partial+\lambda)(\partial+\lambda+\beta),g(\partial,\lambda)=t(\partial)\bar{\phi}(\lambda)$ for some polynomial $t$. The extension is trivial.
		\item If $\alpha\neq1$, then $f(\partial,\lambda)=s(\partial+\lambda)(\partial+\alpha\lambda+\beta),g(\partial,\lambda)=-s(\partial)\bar{\phi}(\lambda)$ for some polynomial $s$. The extension is trivial.
	\end{enumerate}
	
	If $(\delta_1,\bar{\delta}_1,\delta_2,\bar{\delta}_2)=(0,1,1,0)$, one can deduce the result similarly.
	
\end{proof}
\section{For rank two Lie conformal algebras that are of Type II}\label{s6}
	In this section, we investigate the extension problems under the condition that $\mathcal{R}$ is the conformal algebra defined in (\ref{a3}). Then there is a basis $\{A,B\}$ such that
\begin{equation}
[A_\lambda A]=(\partial+2\lambda)A+Q(\partial,\lambda)B,\ [A_\lambda B]=(\partial+a\lambda+b)B,\ [B_\lambda B]=0.
\end{equation}When $Q(\partial,\lambda)=0$, $\mathcal{R}$ is a $\mathcal{W}(a,b)$ algebra, which had been discussed in \cite{lhw2}. So we only consider the case that $Q(\partial,\lambda)\neq0$, which means $b=0$ and $a\in\{1,0,-1,-4,-6\}$.
If $V$ is a non-trivial finite irreducible $\mathcal{R}$-module, then \begin{equation}
V\cong {V}_{\alpha,\beta}=\mathbb{C}[\partial]v,\quad A_\lambda v=(\partial+\alpha\lambda+\beta)v,\quad B_\lambda v=0,
\end{equation}
where $\beta,0\neq\alpha\in\mathbb{C}$.  

By definition \ref{d2}, the $\mathcal{R}$-module structure on ${V}_{\alpha,\beta}$ given by $A_\lambda,B_\lambda\in End_\mathbb{C}(V)[\lambda]$ satisfies
\begin{align}
&[A_{\lambda},A_{\mu}]=(\lambda-\mu)A_{\lambda+\mu}+Q(-\lambda-\mu,\lambda)B_{\lambda+\mu}, \label{m41}\\
&[A_{\lambda},B_{\mu}]=((a-1)\lambda-\mu)B_{\lambda+\mu}, \label{m42}\\
&[B_{\lambda},B_{\mu}]=0, \label{m43}\\
&[\partial,A_{\lambda}]=-\lambda A_{\lambda}, \label{m44}\\
&[\partial,B_{\lambda}]=-\lambda B_{\lambda}. \label{m45}
\end{align}

\subsection{$0\longrightarrow \mathbb{C}c_{\eta} \longrightarrow  E\longrightarrow  {V}_{\alpha,\beta} \longrightarrow   0$}\hfill\\
	
First, we consider extensions of finite irreducible $\mathcal{R}$-modules of the form \begin{align}\label{f41}
0\longrightarrow \mathbb{C}c_{\eta} \longrightarrow  E\longrightarrow  {V}_{\alpha,\beta} \longrightarrow   0.
\end{align}Then $E$ is isomorphic to $\mathbb{C}c_{\eta}\oplus {V}_{\alpha,\beta}=\mathbb{C}c_{\eta}\oplus \mathbb{C}[\partial]v$ as a vector space, and the following identities hold in $E$:
\begin{align}\label{se10}
&\mathcal{R}_\lambda c_{\eta}=0,\quad\partial c_{\eta}=\eta c_{\eta}, \notag\\
&A_\lambda v=(\partial+\alpha\lambda+\beta) v+f(\lambda)c_{\eta},\quad B_\lambda v=g(\lambda)c_{\eta}, 
\end{align}
where $f(\lambda), g(\lambda) \in \mathbb{C}[\lambda]$.

\begin{lemma}\label{l41}
	All trivial extensions of finite irreducible $\mathcal{R}$-modules of the form (\ref{f41}) are given by (\ref{se10}), and $f(\lambda)$ is a scalar multiple of $\alpha\lambda+\eta+\beta,g(\lambda)=0$.
	
\end{lemma}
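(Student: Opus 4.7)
The plan is to mirror the approach used in Lemmas \ref{l11}, \ref{l21}, \ref{l31}: I suppose (\ref{f41}) is trivial, which by the definition of equivalent extensions means there is some $v' = kc_\eta + l(\partial)v \in E$, with $k\in\mathbb{C}$ and a nonzero polynomial $l(\partial)\in\mathbb{C}[\partial]$, that generates a free rank-one $\mathcal{R}$-submodule isomorphic to $V_{\alpha,\beta}$ and complementary to $\mathbb{C}c_\eta$. Equivalently, $v'$ satisfies
\begin{equation*}
A_\lambda v' = (\partial+\alpha\lambda+\beta)v', \qquad B_\lambda v' = 0.
\end{equation*}

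Next I would compute both sides in the two possible ways. Expanding via the module axioms and (\ref{se10}), and using $\partial c_\eta=\eta c_\eta$ so that $l(\partial+\lambda)c_\eta=l(\eta+\lambda)c_\eta$, gives
\begin{align*}
A_\lambda v' &= l(\partial+\lambda)(\partial+\alpha\lambda+\beta)v + l(\eta+\lambda)f(\lambda)c_\eta,\\
B_\lambda v' &= l(\eta+\lambda)g(\lambda)c_\eta,
\end{align*}
while the postulated action gives
\begin{equation*}
(\partial+\alpha\lambda+\beta)v' = l(\partial)(\partial+\alpha\lambda+\beta)v + k(\eta+\alpha\lambda+\beta)c_\eta.
\end{equation*}
Comparing the coefficient of $v$ in the two expressions for $A_\lambda v'$ forces $l(\partial+\lambda)=l(\partial)$ as an element of $\mathbb{C}[\partial,\lambda]$, hence $l(\partial)$ is a nonzero constant $l\in\mathbb{C}^\times$. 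Comparing the $c_\eta$-coefficients then yields $f(\lambda)=(k/l)(\alpha\lambda+\eta+\beta)$, and the vanishing of $B_\lambda v'$ gives $l\,g(\lambda)=0$, i.e.\ $g(\lambda)=0$.

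For the converse, if $f(\lambda)=s(\alpha\lambda+\eta+\beta)$ with $s\in\mathbb{C}$ and $g(\lambda)=0$, then $v':=sc_\eta + v$ is readily checked to satisfy $A_\lambda v'=(\partial+\alpha\lambda+\beta)v'$ and $B_\lambda v'=0$, so $\mathbb{C}[\partial]v'$ is a submodule complementary to $\mathbb{C}c_\eta$ and the extension splits.

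There is no real obstacle here — the argument is the same template as Lemma \ref{l31}, only simpler because the $B$-action on $V_{\alpha,\beta}$ is trivial, so the single scalar equation $l\,g(\lambda)=0$ at the $c_\eta$ level kills $g$ outright once $l$ has been identified as a nonzero constant from the $A$-equation. The only small care needed is to observe that neither the nonzero polynomial $Q(\partial,\lambda)$ in the $[A_\lambda A]$-bracket nor the coefficient $a$ in $[A_\lambda B]$ enters this computation, because the action of $\mathcal{R}$ on $c_\eta$ is trivial; both of these structural constants will, of course, resurface in the cocycle equations needed for Theorem \ref{t41}.
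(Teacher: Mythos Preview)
Your proof is correct and follows essentially the same approach as the paper's own proof of Lemma \ref{l41}: both assume triviality via an element $v'=kc_\eta+l(\partial)v$, compute $A_\lambda v'$ and $B_\lambda v'$ two ways, and compare coefficients to force $l$ constant, $g=0$, and $f(\lambda)$ a scalar multiple of $\alpha\lambda+\eta+\beta$, with the converse handled by the explicit splitting $v'=sc_\eta+v$. Your added remark that $Q$ and $a$ play no role here (because $\mathcal{R}$ acts trivially on $c_\eta$) is a helpful clarification not spelled out in the paper.
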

\begin{proof}
	Assume that (\ref{f41}) is a trivial extension, that is, there exists $v'=kc_\eta+l(\partial)v\in E$, where $k\in\mathbb{C}$ and $0\neq l(\partial)\in\mathbb{C}[\partial]$, such that \begin{align*}
	&A_\lambda v'=(\partial+\alpha\lambda+\beta)v'=k(\eta+\alpha\lambda+\beta)c_\eta+l(\partial)(\partial+\alpha\lambda+\beta)v,\quad B_\lambda v'=0.
	\end{align*}
	
	On the other hand, it follows from (\ref{se10}) that
	\begin{align*}
	&A_\lambda v'=f(\lambda)l(\eta+\lambda)c_\eta+ l(\partial+\lambda)(\partial+\alpha\lambda+\beta)v,\\
	&B_\lambda v'=g(\lambda)l(\eta+\lambda)c_\eta.
	\end{align*}
	We can obtain that $l(\partial)$ is a nonzero constant and $g=0$ by comparing both expressions for $A_\lambda v'$ and $B_\lambda v'$. Thus $f(\lambda)$ is a scalar multiple of $\alpha\lambda+\eta+\beta$.
	
	Conversely, if $f(\lambda)=k(\alpha\lambda+\eta+\beta)$ and $g(\lambda)=0$ for some $k\in\mathbb{C}$, setting $v'=kc_\eta +v$ we can deduce that (\ref{f41}) is a trivial extension.

\end{proof}

\begin{theorem}\label{t41}
	For rank two Lie conformal algebra $\mathcal{R}$ that is of Type II with $Q\neq0$, nontrivial extensions of finite irreducible conformal modules of the form (\ref{f41}) exist only if $\beta+\eta=0$. Moreover, they are given (up to equivalence) by (\ref{se10}). The values of $\alpha$ along with the corresponding polynomials $f(\lambda)$ and $g(\lambda)$ giving rise to nontrivial extensions, are listed as follows:
	 $g(\lambda)=0$ and \begin{align*}
		f(\lambda)=\begin{cases}
		s_1\lambda^2,&\alpha=1,\\
		s_2\lambda^3,&\alpha=2,
		\end{cases}
		\end{align*}
		with nonzero constants $s_1,s_2$. In these cases, $dim(Ext({V}_{\alpha,\beta},\mathbb{C}c_\eta))=1$.	
\end{theorem}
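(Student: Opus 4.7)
The plan is to apply the bracket relations (\ref{m41}) and (\ref{m42}) to $v$ — relation (\ref{m43}) is automatic, since $B_\mu v = g(\mu)c_\eta$ and $B_\lambda c_\eta = 0$ — and compare the $c_\eta$-coefficients. A direct computation then produces the two polynomial identities
\begin{align*}
&(\eta+\lambda+\alpha\mu+\beta)f(\lambda)-(\eta+\mu+\alpha\lambda+\beta)f(\mu)=(\lambda-\mu)f(\lambda+\mu)+Q(-\lambda-\mu,\lambda)g(\lambda+\mu), \\
&(\eta+\mu+\alpha\lambda+\beta)g(\mu)+((a-1)\lambda-\mu)g(\lambda+\mu)=0,
\end{align*}
which, together with Lemma \ref{l41}, reduce the classification to solving these for $f,g\in\mathbb{C}[\lambda]$.

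Specializing the second identity at $\lambda=0$ yields $(\eta+\beta)g(\mu)=0$, so either $g\equiv 0$ or $\eta+\beta=0$. In the former subcase the first identity becomes the Virasoro cocycle equation $(\eta+\lambda+\alpha\mu+\beta)f(\lambda)-(\eta+\mu+\alpha\lambda+\beta)f(\mu)=(\lambda-\mu)f(\lambda+\mu)$, and by Proposition 2.1 of \cite{ckw1} any nontrivial solution already requires $\eta+\beta=0$; hence whenever $\eta+\beta\neq 0$ every extension is trivial, which establishes the necessity of $\eta+\beta=0$.

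Assuming henceforth $\eta+\beta=0$, I would next squeeze the second identity further. Differentiating in $\lambda$, setting $\lambda=0$, and matching coefficients of $g(\mu)=\sum_n g_n\mu^n$ yields $(n-\alpha-a+1)g_n=0$, so $g(\mu)=g_n\mu^n$ with $n=\alpha+a-1\ge 0$; plugging this monomial back into the identity and comparing coefficients narrows the cases in which $g$ can be nonzero to the pair $(a,\alpha)=(1,1)$ (with $g(\mu)=g_1\mu$) and the pairs $(a,\alpha)\in\{(0,1),(-1,2),(-4,5),(-6,7)\}$ (with $g$ a nonzero constant); every other $(a,\alpha)$ forces $g\equiv 0$ outright. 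For each of these exceptional pairs, specializing the first identity at $\mu=0$, $\mu=-\lambda$, and, when needed, $\mu=-2\lambda$ produces polynomial equations whose key ingredients are $Q(-\lambda,\lambda)$, $Q(0,\lambda)$, and $Q(\lambda,\lambda)$, each of which can be read off explicitly from the table in Proposition \ref{type12}; the hypothesis $Q\neq 0$ (so that at least one of the parameters $c,d$ appearing in $Q$ is nonzero) together with matching of powers of $\lambda$ then forces $g\equiv 0$ in every exceptional case.

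With $g\equiv 0$ and $\eta+\beta=0$ in hand, the first identity is precisely the Virasoro extension equation, and Proposition 2.1 of \cite{ckw1} yields $f(\lambda)=s_1\lambda^2$ for $\alpha=1$ and $f(\lambda)=s_2\lambda^3$ for $\alpha=2$ (with $s_i\neq 0$), and only trivial solutions for other values of $\alpha$; combined with Lemma \ref{l41} this gives a one-dimensional extension space in each of the two nontrivial cases. The main obstacle is the third step above, particularly for $(a,\alpha)\in\{(-4,5),(-6,7)\}$: there $Q(-\lambda,\lambda)$ happens to vanish identically, so the $\mu=0$ specialization alone carries no information on $g$, and one must invoke the $\mu=-2\lambda$ specialization (which brings in $Q(\lambda,\lambda)$, a nonzero multiple of $c$) together with the even/odd parity of $f$ extracted from the $\mu=-\lambda$ specialization to derive the arithmetic contradiction that eliminates $g$.
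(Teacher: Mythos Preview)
Your proposal is correct and essentially parallels the paper's proof: both derive the same two identities from (\ref{m41})--(\ref{m42}), reduce to $\eta+\beta=0$ via the specialization $\lambda=0$, solve (\ref{te102}) to pin down the finitely many $(a,\alpha)$ allowing $g\neq 0$, and then eliminate $g$ case by case before invoking Proposition~2.1 of \cite{ckw1}.

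The one point of genuine divergence is how the cases $(a,\alpha)=(-4,5)$ and $(-6,7)$ are handled. The paper exploits the homogeneity of (\ref{te101}): since $Q(-\lambda-\mu,\lambda)t$ is homogeneous of degree $7$ (resp.\ $9$), one may restrict to the corresponding homogeneous component and set $f(\lambda)=s\lambda^6$ (resp.\ $s\lambda^8$), then kill $s$ and $c$ by reading off a single mixed coefficient such as $\lambda^5\mu^2$. Your route is instead to use the triple of specializations $\mu=0,-\lambda,-2\lambda$: the first gives $f(0)=0$, the second forces the \emph{even} part of $f$ to be a specific multiple of $ct\,\lambda^{2-a}$, and the third produces an equation whose odd part in $\lambda$ must vanish, yielding $ct=0$ directly. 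Your argument is uniform across all five values of $a$ and avoids the slightly implicit ``we may assume $f=s\lambda^6$'' step; the paper's coefficient-comparison is shorter once that reduction is granted. Both are sound.
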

\begin{proof}
	Applying both sides of (\ref{m41}) and (\ref{m42}) to $v$ and comparing the corresponding efficients, we obtain \begin{align}
	&(\eta+\lambda+\alpha\mu+\beta)f(\lambda)-(\eta+\mu+\alpha\lambda+\beta)f(\mu)=(\lambda-\mu)f(\lambda+\mu)+Q(-\lambda-\mu,\lambda)g(\lambda+\mu),\label{te101}\\
	&-(\eta+\mu+\alpha\lambda+\beta)g(\mu)=((a-1)\lambda-\mu)g(\lambda+\mu).\label{te102}
	\end{align}
	Setting $\lambda=0$ in (\ref{te102}) gives $$(\eta+\beta)g(\mu)=0.$$
	
	If $\beta+\eta\neq0$, then $g=0$. Putting $\mu=0$ in (\ref{te101}) and combining Lemma \ref{l41}, one can deduce the extension is trivial. 
	
	Assume $\beta+\eta=0$. If $g=0$, then one can obtain the result by Proposition 2.1 in \cite{ckw1}. Now we consider that $g\neq0$. If $a=1$, then (\ref{te102}) turns into $$(\alpha\lambda+\mu)g(\mu)=\mu g(\lambda+\mu),$$ which implies $\alpha\in\{0,1\}$ and $g(\lambda)=\begin{cases}
	t,&\alpha=0,\\
	t\lambda,&\alpha=1
	\end{cases}$ for some nonzero constant $t$. On the other hand, under the condition that $a=1$, we have $Q(\partial,\lambda)=c(\partial+2\lambda),c\neq0$. So (\ref{te101}) is equivalent to the equation $$(\lambda+\alpha\mu)f(\lambda)-(\mu+\alpha\lambda)f(\mu)=(\lambda-\mu)f(\lambda+\mu)+c(\lambda-\mu)g(\lambda+\mu).$$ Taking the value of $\alpha,g(\lambda)$ and $\mu=0$ in the variant, one can get a contradiction. So $g(\lambda)=0$ if $a=1$.
	
	If $\beta+\eta=0,g(\lambda)\neq0,a\neq1$, then (\ref{te102}) implies $\alpha=1-a$ and $g(\lambda)=t\in\mathbb{C}^\times$. So if $a=0$, then $\alpha=1$ and $Q(\partial,\lambda)=c\lambda(\partial+\lambda)(\partial+2\lambda)+d\partial(\partial+2\lambda)$. Putting $\mu=0$ in (\ref{te101}), we have $-\lambda f(0)=-dt\lambda^2$. Thus $d=0$ and $f(0)=0$. Then putting $\mu=-\lambda$ in (\ref{te101}), we can see that $0=2ct\lambda^3$, that is,  $c=0$, which contracts with $Q(\partial,\lambda)\neq0$. Hence $a\neq0$. Similarly, one can check that $a\neq-1$ by putting  $\mu=0,-\lambda,-2\lambda$ in (\ref{te101}) one after another. For $a=-4$, we can assume $f(\lambda)=s\lambda^6$. By comparing the coefficient of $\lambda^5\mu^2$ in (\ref{te101}), we can deduce that $s=c=0$. Thus, $Q(\partial,\lambda)=0$. For $a=-6$, we can assume $f(\lambda)=s\lambda^8$ and we can get a contradiction by comparing the coefficient of $\lambda^6\mu^3$ in (\ref{te101}).
	
\end{proof}
\subsection{$0\longrightarrow {V}_{\alpha,\beta} \longrightarrow  E\longrightarrow\mathbb{C}c_{\eta}   \longrightarrow   0$}\hfill\\

Next, we consider extensions of finite irreducible $\mathcal{R}$-modules of the form \begin{align}\label{f42}
0\longrightarrow {V}_{\alpha,\beta} \longrightarrow  E\longrightarrow\mathbb{C}c_{\eta}   \longrightarrow   0.
\end{align}Then $E$ is isomorphic to ${V}_{\alpha,\beta}\oplus\mathbb{C}c_{\eta} =\mathbb{C}[\partial]v\oplus\mathbb{C}c_{\eta} $ as a vector space, and the following identities hold in $E$:
\begin{align}\label{se11}
&A_\lambda v=(\partial+\alpha\lambda+\beta)v,\quad B_\lambda v=0, \notag\\
&A_\lambda c_{\eta}=f(\partial,\lambda)v,\quad B_\lambda c_{\eta}=g(\partial,\lambda)v,\quad\partial c_{\eta}=\eta c_{\eta}+h(\partial)v, 
\end{align}
where $f(\partial,\lambda), g(\partial,\lambda), h(\partial)\in \mathbb{C}[\partial,\lambda]$.

\begin{lemma}\label{l42}
	All trivial extensions of finite irreducible $\mathcal{R}$-modules of the form (\ref{f42}) are given by (\ref{se11}), and $f(\partial,\lambda)=\varphi(\partial+\lambda)(\partial+\alpha\lambda+\beta),g(\partial,\lambda)=0$ and $h(\partial)=(\partial-\eta)\varphi(\partial)$, where $\varphi$ is a polynomial.
\end{lemma}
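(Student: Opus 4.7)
The plan is to mimic exactly the strategy used in the preceding analogous lemmas (Lemma \ref{l12}, Lemma \ref{l22}, and especially Lemma \ref{l32}, which is the case $\delta_1=1,\delta_2=0$ of the present situation specialized to Type~I). A trivial extension by definition admits a vector $c'_\eta = kc_\eta + l(\partial)v \in E$ with $0\neq k\in\mathbb{C}$ and $l(\partial)\in\mathbb{C}[\partial]$ such that $\mathcal{R}_\lambda c'_\eta = 0$ and $\partial c'_\eta = \eta c'_\eta$. So I would compute each of $A_\lambda c'_\eta$, $B_\lambda c'_\eta$, and $\partial c'_\eta$ in two ways: once using these defining relations for $c'_\eta$, and once using the formulas in (\ref{se11}) for the action on $c_\eta$ and $v$, together with $A_\lambda v = (\partial+\alpha\lambda+\beta)v$ and $B_\lambda v=0$.

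In the first direction, the computation using (\ref{se11}) gives
\begin{align*}
A_\lambda c'_\eta &= \bigl(kf(\partial,\lambda) + l(\partial+\lambda)(\partial+\alpha\lambda+\beta)\bigr)v,\\
B_\lambda c'_\eta &= kg(\partial,\lambda)v,\\
\partial c'_\eta &= k\eta c_\eta + \bigl(kh(\partial)+\partial l(\partial)\bigr)v,
\end{align*}
while the defining relations for $c'_\eta$ force $A_\lambda c'_\eta=0$, $B_\lambda c'_\eta=0$, and $\partial c'_\eta = k\eta c_\eta + \eta l(\partial)v$. Matching the two expressions yields $g(\partial,\lambda)=0$, $f(\partial,\lambda)= -k^{-1}l(\partial+\lambda)(\partial+\alpha\lambda+\beta)$, and $h(\partial) = -k^{-1}(\partial-\eta)l(\partial)$. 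Setting $\varphi(\partial) := -k^{-1}l(\partial)$ gives precisely the claimed formulas.

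For the converse, given any polynomial $\varphi$ and the $f,g,h$ of the stated form, I would set $c'_\eta := c_\eta - \varphi(\partial)v$ (corresponding to $k=1$, $l=-\varphi$) and verify directly from (\ref{se11}) that $A_\lambda c'_\eta = B_\lambda c'_\eta = 0$ and $\partial c'_\eta = \eta c'_\eta$, showing that the extension is trivial. I do not anticipate any genuine obstacle here: unlike the subsequent Theorem~\ref{t42}, there is no polynomial identity to solve; the argument is a direct coefficient comparison, entirely parallel to Lemma~\ref{l32}, with the only simplification being that $B_\lambda v=0$ automatically forces $g=0$ rather than tying $g$ to $\phi(\lambda)$.
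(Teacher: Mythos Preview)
Your proposal is correct and follows essentially the same approach as the paper's own proof: both assume a trivial extension, write $c'_\eta = kc_\eta + l(\partial)v$, compute $A_\lambda c'_\eta$, $B_\lambda c'_\eta$, $\partial c'_\eta$ via (\ref{se11}), compare with the defining relations $A_\lambda c'_\eta = B_\lambda c'_\eta = 0$, $\partial c'_\eta = \eta c'_\eta$, and then verify the converse by taking $c'_\eta = c_\eta - \varphi(\partial)v$. Your identification $\varphi = -k^{-1}l$ makes the correspondence explicit, which the paper leaves implicit.
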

\begin{proof}
	Assume that (\ref{f42}) is a trivial extension, that is, there exists $c'_\eta=kc_\eta+l(\partial)v\in E$, where $0\neq k\in\mathbb{C}$ and $ l(\partial)\in\mathbb{C}[\partial]$, such that $A_\lambda c'_\eta=B_\lambda c'_\eta=0$ and $\partial c'_{\eta}=\eta c'_{\eta}=k\eta c_\eta+\eta l(\partial)v$.
	
	On the other hand, it follows from (\ref{se11}) that
	\begin{align*}
	&A_\lambda c'_\eta=(kf(\partial,\lambda)+l(\partial+\lambda)(\partial+\alpha\lambda+\beta))v, \quad B_\lambda c'_\eta=kg(\partial,\lambda)v,\\
	&\partial c'_{\eta}=k\eta c_\eta+(kh(\partial)+\partial l(\partial))v.
	\end{align*}
	We can obtain the result by comparing both expressions for  $A_\lambda c'_\eta,B_\lambda c'_\eta$ and $\partial c'_\eta$.
	
	Conversely, if $f(\partial,\lambda)=\varphi(\partial+\lambda)(\partial+\alpha\lambda+\beta),g(\partial,\lambda)=0$ and $h(\partial)=(\partial-\eta)\varphi(\partial)$ for some polynomial $\varphi$, setting $c'_{\eta}=c_\eta -\varphi(\partial)v$, we can deduce that (\ref{f42}) is a trivial extension. 
\end{proof}

\begin{theorem}\label{t42}
	For rank two Lie conformal algebra $\mathcal{R}$ that is of Type II, nontrivial extensions of finite irreducible conformal modules of the form (\ref{f42}) exist only if $\beta+\eta=0$ and $\alpha=1$. Moreover, they are given (up to equivalence) by (\ref{se11}) and $dim(Ext(\mathbb{C}c_\eta,{V}_{\alpha,\beta}))=1$. The corresponding polynomials $f(\partial,\lambda),g(\partial,\lambda)$ and $h(\partial)$ giving rise to nontrivial extensions, are listed as follows: $g(\partial,\lambda)=0$ and $f(\partial,\lambda)=h(\partial)=s$ with nonzero constant $s$.		
\end{theorem}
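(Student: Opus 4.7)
The plan is to mirror the strategy of Theorem \ref{t32}: extract the basic polynomial constraints from the sesquilinearity axioms (\ref{m44})--(\ref{m45}), dispose of $g(\partial,\lambda)$ immediately, observe that the remaining system collapses to the pure Virasoro case, and invoke the known Virasoro result.

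First I would apply both sides of (\ref{m45}) to $c_\eta$. Since $B_\lambda v=0$ in $V_{\alpha,\beta}$, the right-hand side contributes no $v$-component beyond $g$, and expanding $\partial(B_\lambda c_\eta)$ and $B_\lambda(\partial c_\eta)$ using (\ref{se11}) yields
\begin{equation*}
(\partial+\lambda-\eta)g(\partial,\lambda)=0,
\end{equation*}
so $g(\partial,\lambda)=0$. Applying (\ref{m44}) to $c_\eta$ in the same way gives
\begin{equation*}
(\partial+\lambda-\eta)f(\partial,\lambda)=h(\partial+\lambda)(\partial+\alpha\lambda+\beta),
\end{equation*}
exactly as in (\ref{te81}) with $\delta_1=1$.

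Next I would verify that the two bracket relations (\ref{m41})--(\ref{m42}) impose no new constraints. For (\ref{m42}), both sides vanish: $A_\lambda B_\mu c_\eta=0$ since $g=0$, $B_\mu A_\lambda c_\eta=f(\partial+\mu,\lambda)B_\mu v=0$, and $((a-1)\lambda-\mu)B_{\lambda+\mu}c_\eta=0$. For (\ref{m41}), the crucial point is that the $Q$-term now contributes $Q(-\lambda-\mu,\lambda)g(\partial,\lambda+\mu)v=0$, so the equation reduces precisely to
\begin{equation*}
f(\partial+\lambda,\mu)(\partial+\alpha\lambda+\beta)-f(\partial+\mu,\lambda)(\partial+\alpha\mu+\beta)=(\lambda-\mu)f(\partial,\lambda+\mu),
\end{equation*}
which is identical to the Virasoro extension equation. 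Together with the first-order relation above, this is exactly the system solved in Proposition 2.2 of \cite{ckw1}, whose conclusion matches the statement: nontrivial solutions occur only when $\alpha=1$ and $\beta+\eta=0$, in which case $f(\partial,\lambda)=h(\partial)=s\in\C^\times$ up to the trivial coboundaries described in Lemma \ref{l42}.

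I expect no genuine obstacle here; the only thing worth pausing over is the observation that the extra $Q$-contribution to (\ref{m41}) is silently killed by $g=0$, so one never has to engage with the explicit polynomials $Q(\partial,\lambda)$ tabulated in Proposition \ref{type12}. Once this is noted, the theorem follows by quotation of the Virasoro result plus Lemma \ref{l42}.
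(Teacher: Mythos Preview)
Your proposal is correct and follows essentially the same route as the paper: deduce $g=0$ from (\ref{m45}), obtain the sesquilinearity relation for $f$ from (\ref{m44}), and then quote Proposition~2.2 of \cite{ckw1}. You add the explicit check that (\ref{m41})--(\ref{m42}) impose nothing new once $g=0$, which the paper omits, but this is just extra care rather than a different approach.
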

\begin{proof}
	Applying both sides of (\ref{m44}) and (\ref{m45})  to $c_\eta$ and comparing the corresponding efficients gives the following equations \begin{align}
	&(\partial+\lambda-\eta)f(\partial,\lambda)=h(\partial+\lambda)(\partial+\alpha\lambda+\beta),\label{te111}\\
	&(\partial+\lambda-\eta)g(\partial,\lambda)=0.\label{te112}
	\end{align}
	
	Then $g(\partial,\lambda)=0$ by (\ref{te112}), and the result can be deduced by Proposition 2.2 in \cite{ckw1}. 
\end{proof}
\subsection{$0\longrightarrow {V}_{\alpha,\beta} \longrightarrow  E\longrightarrow  {V}_{\bar{\alpha},\bar{\beta}} \longrightarrow   0$}\hfill\\
Finally, we consider extensions of finite irreducible $\mathcal{R}$-modules of the form \begin{align}\label{f43}
0\longrightarrow {V}_{\alpha,\beta} \longrightarrow  E\longrightarrow  {V}_{\bar{\alpha},\bar{\beta}} \longrightarrow   0.
\end{align}Then $E$ is isomorphic to ${V}_{\alpha,\beta} \oplus  {V}_{\bar{\alpha},\bar{\beta}} =\mathbb{C}[\partial]v\oplus\mathbb{C}[\partial]\bar{v} $ as a vector space, and the following identities hold in $E$:
\begin{align}\label{se12}
&A_\lambda v=(\partial+\alpha\lambda+\beta)v,\quad B_\lambda v=0, \notag\\
&A_\lambda \bar{v}=(\partial+\bar{\alpha}\lambda+\bar{\beta})\bar{v}+f(\partial,\lambda)v,\quad B_\lambda \bar{v}=g(\partial,\lambda)v, 
\end{align}
where $f(\partial,\lambda), g(\partial,\lambda)\in \mathbb{C}[\partial,\lambda]$.
\begin{lemma}\label{l43}
	All trivial extensions of finite irreducible $\mathcal{R}$-modules of the form (\ref{f43}) are given by (\ref{se12}), and $f(\partial,\lambda)=\varphi(\partial+\lambda)(\partial+\alpha\lambda+\beta)-\varphi(\partial)(\partial+\bar{\alpha}\lambda+\bar{\beta})$ and $g(\partial,\lambda)=0$ for some polynomial $\varphi$.
\end{lemma}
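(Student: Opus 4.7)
The plan is to follow the exact pattern established for the analogous trivial-extension lemmas \ref{l13}, \ref{l23}, and \ref{l33}. First, I would suppose that the extension (\ref{f43}) is trivial, so by definition there exists an element $\bar{v}'=k(\partial)v+l(\partial)\bar{v}\in E$ with $l(\partial)\neq 0$ such that the $\mathcal{R}$-action on $\bar{v}'$ reproduces the action on the generator of ${V}_{\bar{\alpha},\bar{\beta}}$; explicitly,
\begin{align*}
A_\lambda \bar{v}'&=(\partial+\bar{\alpha}\lambda+\bar{\beta})\bar{v}'=k(\partial)(\partial+\bar{\alpha}\lambda+\bar{\beta})v+l(\partial)(\partial+\bar{\alpha}\lambda+\bar{\beta})\bar{v},\\
B_\lambda \bar{v}'&=0.
\end{align*}

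Next, I would compute the same two expressions directly from (\ref{se12}), using $A_\lambda v=(\partial+\alpha\lambda+\beta)v$ and $B_\lambda v=0$, to obtain
\begin{align*}
A_\lambda \bar{v}' &= \bigl(k(\partial+\lambda)(\partial+\alpha\lambda+\beta)+l(\partial+\lambda)f(\partial,\lambda)\bigr)v+l(\partial+\lambda)(\partial+\bar{\alpha}\lambda+\bar{\beta})\bar{v},\\
B_\lambda \bar{v}' &= l(\partial+\lambda)g(\partial,\lambda)v.
\end{align*}
Matching the two expressions for $A_\lambda \bar{v}'$ and comparing the coefficients of $\bar{v}$ forces $l(\partial+\lambda)=l(\partial)$, so $l(\partial)$ must be a nonzero constant; after rescaling we may assume $l=1$. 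Then the equation $B_\lambda \bar{v}'=0$ immediately gives $g(\partial,\lambda)=0$, and comparing the coefficients of $v$ in the $A_\lambda$ identity yields
\begin{equation*}
f(\partial,\lambda)=k(\partial)(\partial+\bar{\alpha}\lambda+\bar{\beta})-k(\partial+\lambda)(\partial+\alpha\lambda+\beta).
\end{equation*}
Setting $\varphi(\partial):=-k(\partial)$ then produces precisely the claimed formula for $f(\partial,\lambda)$.

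For the converse, given any polynomial $\varphi$ with $f(\partial,\lambda)=\varphi(\partial+\lambda)(\partial+\alpha\lambda+\beta)-\varphi(\partial)(\partial+\bar{\alpha}\lambda+\bar{\beta})$ and $g(\partial,\lambda)=0$, I would put $\bar{v}':=-\varphi(\partial)v+\bar{v}$ and verify directly from (\ref{se12}) that $A_\lambda \bar{v}'=(\partial+\bar{\alpha}\lambda+\bar{\beta})\bar{v}'$ and $B_\lambda \bar{v}'=0$, exhibiting the extension as equivalent to the direct sum. I do not expect a genuine obstacle here: the argument is a direct specialization of Lemma \ref{l33}, made even simpler by the fact that $B_\lambda$ acts as $0$ on both $v$ and on the irreducible module ${V}_{\bar{\alpha},\bar{\beta}}$, which collapses the $B$-side of the computation entirely.
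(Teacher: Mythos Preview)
Your proposal is correct and follows essentially the same approach as the paper's own proof: assume triviality to get a splitting element $\bar{v}'=k(\partial)v+l(\partial)\bar{v}$, compare the two computations of $A_\lambda\bar{v}'$ and $B_\lambda\bar{v}'$ to force $l$ constant, $g=0$, and the stated form of $f$, then verify the converse with $\bar{v}'=-\varphi(\partial)v+\bar{v}$. The only cosmetic difference is that you normalize $l=1$ explicitly, whereas the paper leaves $l$ as an unspecified nonzero constant absorbed into $\varphi$.
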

\begin{proof}
	Assume that (\ref{f43}) is a trivial extension, that is, there exists $\bar{v}'=k(\partial)v+l(\partial)\bar{v}\in E$, where $k(\partial),l(\partial)\in\mathbb{C}$ and $ l(\partial)\neq0$, such that \begin{align*}
	&A_\lambda \bar{v}'=(\partial+\bar{\alpha}\lambda+\bar{\beta})\bar{v}'=k(\partial)(\partial+\bar{\alpha}\lambda+\bar{\beta})v+l(\partial)(\partial+\bar{\alpha}\lambda+\bar{\beta})\bar{v},\  B_\lambda \bar{v}'=0.
	\end{align*}
	
	On the other hand, it follows from (\ref{se12}) that
	\begin{align*}
	&A_\lambda \bar{v}'=(k(\partial+\lambda)(\partial+\alpha\lambda+\beta)+l(\partial+\lambda)f(\partial,\lambda))v+l(\partial+\lambda)(\partial+\bar{\alpha}\lambda+\bar{\beta})\bar{v},\\
	&B_\lambda \bar{v}'=l(\partial+\lambda)g(\partial,\lambda)v.
	\end{align*}
	Comparing both expressions for $A_\lambda \bar{v}'$ and $B_\lambda \bar{v}'$, we can obtain that $l$ is a nonzero constant. And then we can give the expressions of $f(\partial,\lambda)$ and $g(\partial,\lambda)$.
	
	Conversely, if $f(\partial,\lambda)=\varphi(\partial+\lambda)(\partial+\alpha\lambda+\beta)-\varphi(\partial)(\partial+\bar{\alpha}\lambda+\bar{\beta})$ and $g(\partial,\lambda)=0$ for some polynomial $\varphi$, setting $\bar{v}'=-\varphi(\partial)v+\bar{v}$, we can deduce that (\ref{f43}) is a trivial extension.
\end{proof}

\begin{theorem}\label{t43}
	For rank two Lie conformal algebra $\mathcal{R}$ that is of Type II, nontrivial extensions of finite irreducible conformal modules of the form (\ref{f43}) exist only if $\beta=\bar{\beta}$. Moreover, they are given (up to equivalence) by (\ref{se12}). The value of $\alpha,\bar{\alpha}$, and the corresponding polynomials $f(\partial,\lambda)$ and $g(\partial,\lambda)$ giving rise to nontrivial extensions, are listed as follows (by replacing $\partial$ by $\partial+\beta$):
	\begin{enumerate}
		\item In the case that $a=1$, $\bar{\alpha}-\alpha\in\{0,1,2,3,4,5,6\},\alpha,\bar{\alpha}\neq0$ and	\begin{enumerate}[(i)]
				\item $\bar{\alpha}=\alpha$, $f(\partial,\lambda)=s_0+s_1\lambda,g(\partial,\lambda)=0$, where $(s_0,s_1)\neq (0,0)$.
				\item $\bar{\alpha}-\alpha=1$, $f(\partial,\lambda)=\frac{ct}{\alpha}\partial,g(\partial,\lambda)=t\lambda$, where $t\neq 0$.
			\item $\bar{\alpha}-\alpha=2$ with $\alpha \neq-1$, $f(\partial,\lambda)=s\lambda^2(2\partial+\lambda),g(\partial,\lambda)=0$, where $s\neq0$.
			\item $\bar{\alpha}=1$ and $\alpha=-1$, $f(\partial,\lambda)=s\lambda^2(2\partial+\lambda)-ct(\partial^2-\lambda^2),g(\partial,\lambda)=t(\partial\lambda+\lambda^2)$, where $(s,t)\neq(0,0)$.
			\item $\bar{\alpha}-\alpha=3$, $f(\partial,\lambda)=s\partial\lambda^2(\partial+\lambda),g(\partial,\lambda)=0$, where $s\neq0$.
			\item $\bar{\alpha}-\alpha=4$, $f(\partial,\lambda)=s\lambda^2(4\partial^3+6\partial^2\lambda-\partial\lambda^2+\alpha_1\lambda^3),g(\partial,\lambda)=0$, where $s\neq0$.
			\item $\bar{\alpha}=1$ and $\alpha=-4$, $f(\partial,\lambda)=s(\partial^4\lambda^2-10\partial^2\lambda^4-17\partial\lambda^5-8\lambda^6),g(\partial,\lambda)=0$, where $s\neq0$.
			
			\item $\bar{\alpha}=\frac{7}{2}\pm\frac{\sqrt{19}}{2}$ and $\alpha=-\frac{5}{2}\pm\frac{\sqrt{19}}{2}$, $f(\partial,\lambda)=s(\partial^4\lambda^3-(2\alpha+3)\partial^3\lambda^4-3\alpha\partial^2\lambda^5-(3\alpha+1)\partial\lambda^6-(\alpha+\frac{9}{28})\lambda^7),g(\partial,\lambda)=0$, where $s\neq0$.
		\end{enumerate}
	The value of $dim(Ext({V}_{\bar{\alpha},\bar{\beta}},{V}_{\alpha,\beta}))$ is 2 in subcase (i) and (iv), and 1 in the other subcases.
		\item In the case that $a=0$, $\bar{\alpha}-\alpha\in\{0,1,2,3,4,5,6\},\alpha,\bar{\alpha}\neq0$ and	\begin{enumerate}[(i)]
			\item $\bar{\alpha}=\alpha$, $f(\partial,\lambda)=s_0+s_1\lambda,g(\partial,\lambda)=0$, where $(s_0,s_1)\neq (0,0)$.
			\item $\bar{\alpha}-\alpha=1$, $f(\partial,\lambda)=-\frac{ct}{\alpha}\partial\lambda-\frac{dt}{\alpha}\partial,g(\partial,\lambda)=t$, where $t\neq 0$.
			\item $\bar{\alpha}-\alpha=2$ with $\alpha \neq-1$, $f(\partial,\lambda)=s\lambda^2(2\partial+\lambda),g(\partial,\lambda)=0$, where $s\neq0$.
			\item $\bar{\alpha}=1$ and $\alpha=-1$, $f(\partial,\lambda)=s\lambda^2(2\partial+\lambda)+ct\partial^2\lambda+dt(\partial^2-\lambda^2),g(\partial,\lambda)=t(\partial+\lambda)$, where $(s,t)\neq(0,0)$.
			\item $\bar{\alpha}-\alpha=3$, $f(\partial,\lambda)=s\partial\lambda^2(\partial+\lambda),g(\partial,\lambda)=0$, where $s\neq0$.
			\item $\bar{\alpha}-\alpha=4$, $f(\partial,\lambda)=s\lambda^2(4\partial^3+6\partial^2\lambda-\partial\lambda^2+\alpha_1\lambda^3),g(\partial,\lambda)=0$, where $s\neq0$.
            \item $\bar{\alpha}=1$ and $\alpha=-4$, $f(\partial,\lambda)=s(\partial^4\lambda^2-10\partial^2\lambda^4-17\partial\lambda^5-8\lambda^6),g(\partial,\lambda)=0$, where $s\neq0$.
			
			\item $\bar{\alpha}=\frac{7}{2}\pm\frac{\sqrt{19}}{2}$ and $\alpha=-\frac{5}{2}\pm\frac{\sqrt{19}}{2}$, $f(\partial,\lambda)=s(\partial^4\lambda^3-(2\alpha+3)\partial^3\lambda^4-3\alpha\partial^2\lambda^5-(3\alpha+1)\partial\lambda^6-(\alpha+\frac{9}{28})\lambda^7),g(\partial,\lambda)=0$, where $s\neq0$.
		\end{enumerate}	
	The value of $dim(Ext({V}_{\bar{\alpha},\bar{\beta}},{V}_{\alpha,\beta}))$ is 2 in subcase (i) and (iv), and 1 in the other subcases.
		\item In the case that $a=-1$, $\bar{\alpha}-\alpha\in\{0,1,2,3,4,5,6\},\alpha,\bar{\alpha}\neq0$ and\begin{enumerate}[(i)]
			\item $\bar{\alpha}=\alpha$, $f(\partial,\lambda)=s_0+s_1\lambda,g(\partial,\lambda)=0$, where $(s_0,s_1)\neq (0,0)$.
			\item $\bar{\alpha}-\alpha=2$ with $\alpha\neq-\frac{1}{2}$, $f(\partial,\lambda)=s\lambda^2(2\partial+\lambda),g(\partial,\lambda)=0$, where $s\neq0$.
			\item $\bar{\alpha}=\frac{3}{2}$ and $\alpha=-\frac{1}{2}$, $f(\partial,\lambda)=s\lambda^2(2\partial+\lambda)-2dt\partial^2\lambda-ct(2\partial^2-\lambda^2),g(\partial,\lambda)=t$, where $(s,t)\neq(0,0)$.
			\item $\bar{\alpha}-\alpha=3$ with $\alpha\neq-1$, $f(\partial,\lambda)=s\partial\lambda^2(\partial+\lambda),g(\partial,\lambda)=0$, where $s\neq0$.
			\item $\bar{\alpha}=2$ and $\alpha=-1$, $f(\partial,\lambda)=s\partial\lambda^2(\partial+\lambda)-\frac{dt}{4}(2\partial^3\lambda+\lambda^4)-\frac{ct}{2}(\partial^3-2\partial\lambda^2-2\lambda^3),g(\partial,\lambda)=t(\partial+\frac{1}{2}\lambda)$, where $(s,t)\neq(0,0)$.
			\item $\bar{\alpha}-\alpha=4$, $f(\partial,\lambda)=s\lambda^2(4\partial^3+6\partial^2\lambda-\partial\lambda^2+\alpha_1\lambda^3),g(\partial,\lambda)=0$, where $s\neq0$.
			
			\item $\bar{\alpha}=1$ and $\alpha=-4$, $f(\partial,\lambda)=s(\partial^4\lambda^2-10\partial^2\lambda^4-17\partial\lambda^5-8\lambda^6),g(\partial,\lambda)=0$, where $s\neq0$.
			
			\item $\bar{\alpha}=\frac{7}{2}\pm\frac{\sqrt{19}}{2}$ and $\alpha=-\frac{5}{2}\pm\frac{\sqrt{19}}{2}$, $f(\partial,\lambda)=s(\partial^4\lambda^3-(2\alpha+3)\partial^3\lambda^4-3\alpha\partial^2\lambda^5-(3\alpha+1)\partial\lambda^6-(\alpha+\frac{9}{28})\lambda^7),g(\partial,\lambda)=0$, where $s\neq0$.
		\end{enumerate}
	The value of $dim(Ext({V}_{\bar{\alpha},\bar{\beta}},{V}_{\alpha,\beta}))$ is 2 in subcase (i) and (v), and 1 in the other subcases.
		\item In the case that $a=-4$, $\bar{\alpha}-\alpha\in\{0,1,2,3,4,5,6,7\},\alpha,\bar{\alpha}\neq0,$ and\begin{enumerate}[(i)]
			\item $\bar{\alpha}=\alpha$, $f(\partial,\lambda)=s_0+s_1\lambda,g(\partial,\lambda)=0$, where $(s_0,s_1)\neq (0,0)$.
			\item $\bar{\alpha}-\alpha=2$, $f(\partial,\lambda)=s\lambda^2(2\partial+\lambda),g(\partial,\lambda)=0$, where $s\neq0$.
			\item $\bar{\alpha}-\alpha=3$, $f(\partial,\lambda)=s\partial\lambda^2(\partial+\lambda),g(\partial,\lambda)=0$, where $s\neq0$.
			\item $\bar{\alpha}-\alpha=4$, $f(\partial,\lambda)=s\lambda^2(4\partial^3+6\partial^2\lambda-\partial\lambda^2+\alpha_1\lambda^3),g(\partial,\lambda)=0$, where $s\neq0$.
			\item $\bar{\alpha}-\alpha=5$ with $\alpha\notin \{-2,-4\}$, $f(\partial,\lambda)=-\frac{3}{\alpha(\alpha+2)(\alpha+4)}ct\partial^3\lambda^3+\frac{9(\alpha+1)}{2\alpha(\alpha+2)(\alpha+4)}ct\partial^2\lambda^4-\frac{9(\alpha+1)(2\alpha+1)}{10\alpha(\alpha+2)(\alpha+4)}ct\partial\lambda^5+\frac{(\alpha+1)(2\alpha+1)}{10(\alpha+2)(\alpha+4)}ct\lambda^6,g(\partial,\lambda)=t$, where $t\neq0$.
			\item $\bar{\alpha}=3$ and $\alpha=-2$,  $f(\partial,\lambda)=\frac{3}{8}ct\partial^4\lambda^2-\frac{3}{2}ct\partial^2\lambda^4-\frac{57}{40}ct\partial\lambda^5-\frac{2}{5}ct\lambda^6,g(\partial,\lambda)=t$, where $t\neq0$.
			\item $\bar{\alpha}=1$ and $\alpha=-4$, $f(\partial,\lambda)=s(\partial^4\lambda^2-10\partial^2\lambda^4-17\partial\lambda^5-8\lambda^6),g(\partial,\lambda)=0$, where $s\neq0$.
			\item $\bar{\alpha}-\alpha=6$ with $\alpha\notin\{-\frac{5}{2},-\frac{5}{2}\pm\frac{\sqrt{19}}{2}\}$, $f(\partial,\lambda)=
			-\frac{3 }{(2\alpha+5)(2\alpha^2+10\alpha+3)}ct\partial^4\lambda^3
			+\frac{3 (2 \alpha  + 3 )}{(2\alpha+5)(2\alpha^2+10\alpha+3)}ct\partial^3\lambda^4
			-\frac{9(\alpha+1)(2\alpha+3)}{5(2\alpha+5)(2\alpha^2+10\alpha+3)}ct\partial^2\lambda^5
			+\frac{(\alpha+1)(2\alpha+1)(2\alpha+3)}{5(2\alpha+5)(2\alpha^2+10\alpha+3)}ct\partial\lambda^6-\frac{\alpha(\alpha+1)(2\alpha+1)(2\alpha+3)}{70(2\alpha+5)(2\alpha^2+10\alpha+3)}ct\lambda^7,g(\partial,\lambda)=t(\partial-\frac{\alpha}{5}\lambda)$, where $t\neq0$.
			\item $\bar{\alpha}=\frac{7}{2}$ and $\alpha=-\frac{5}{2}$, $f(\partial,\lambda)=\frac{36}{665}ct\bar{\partial}^5\lambda^2-\frac{54}{113}ct\bar{\partial}^3\lambda^4-\frac{387}{665}ct\bar{\partial}^2\lambda^5-\frac{218}{665}ct\bar{\partial}\lambda^6+\frac{127}{1862}ct\lambda^7,g(\partial,\lambda)=t(\partial+\frac{1}{2}\lambda)$, where $t\neq0$. 
			\item $\bar{\alpha}=\frac{7}{2}\pm\frac{\sqrt{19}}{2}$ and $\alpha=-\frac{5}{2}\pm\frac{\sqrt{19}}{2}$, $f(\partial,\lambda)=s(\partial^4\lambda^3-(2\alpha+3)\partial^3\lambda^4-3\alpha\partial^2\lambda^5-(3\alpha+1)\partial\lambda^6-(\alpha+\frac{9}{28})\lambda^7),g(\partial,\lambda)=0$, where $s\neq0$.
			\item $\bar{\alpha}=1$ and $\alpha=-6$,  $f(\partial,\lambda)=\frac{1}{35}ct\partial^5\lambda^3+\frac{2}{7}ct\partial^4\lambda^4+\frac{36}{35}ct\partial^3\lambda^5+\frac{12}{7}ct\partial^2\lambda^6+\frac{66}{49}ct\partial\lambda^7+\frac{99}{245}ct\lambda^8,g(\partial,\lambda)=t(\partial^2+\frac{11}{5}\partial\lambda+\frac{6}{5}\lambda^2)$, where $t\neq0$.
		\end{enumerate}
	The value of $dim(Ext({V}_{\bar{\alpha},\bar{\beta}},{V}_{\alpha,\beta}))$ is 2 in subcase (i), and 1 in the other subcases.
		\item In the case that $a=-6$, $\bar{\alpha}-\alpha\in\{0,1,2,3,4,5,6,7,8\},\alpha,\bar{\alpha}\neq0,$ and\begin{enumerate}[(i)]
			\item $\bar{\alpha}=\alpha$, $f(\partial,\lambda)=s_0+s_1\lambda,g(\partial,\lambda)=0$, where $(s_0,s_1)\neq (0,0)$.
			\item $\bar{\alpha}-\alpha=2$, $f(\partial,\lambda)=s\lambda^2(2\partial+\lambda),g(\partial,\lambda)=0$, where $s\neq0$.
			\item $\bar{\alpha}-\alpha=3$, $f(\partial,\lambda)=s\partial\lambda^2(\partial+\lambda),g(\partial,\lambda)=0$, where $s\neq0$.
			\item $\bar{\alpha}-\alpha=4$, $f(\partial,\lambda)=s\lambda^2(4\partial^3+6\partial^2\lambda-\partial\lambda^2+\alpha_1\lambda^3),g(\partial,\lambda)=0$, where $s\neq0$.
			\item $\bar{\alpha}=1$ and $\alpha=-4$, $f(\partial,\lambda)=s(\partial^4\lambda^2-10\partial^2\lambda^4-17\partial\lambda^5-8\lambda^6),g(\partial,\lambda)=0$, where $s\neq0$.
			
			\item $\bar{\alpha}=\frac{7}{2}\pm\frac{\sqrt{19}}{2}$ and $\alpha=-\frac{5}{2}\pm\frac{\sqrt{19}}{2}$, $f(\partial,\lambda)=s(\partial^4\lambda^3-(2\alpha+3)\partial^3\lambda^4-3\alpha\partial^2\lambda^5-(3\alpha+1)\partial\lambda^6-(\alpha+\frac{9}{28})\lambda^7),g(\partial,\lambda)=0$, where $s\neq0$.
			\item $\bar{\alpha}=4\pm\frac{\sqrt{22}}{2}$ and $\alpha=-3\pm\frac{\sqrt{22}}{2}$, $f(\partial,\lambda)=
		-\frac{40}{7(\alpha+3)}ct\partial^5\lambda^3
			+\frac{100(\alpha+2)}{7(\alpha+3)}ct\partial^4\lambda^4
			+\frac{40(5\alpha+1)}{7(\alpha+3)}ct\partial^3\lambda^5
			+\frac{20(16\alpha+11)}{7(\alpha+3)}ct\partial^2\lambda^6
			+\frac{10(154\alpha+101)}{49(\alpha+3)}ct\partial\lambda^7+\frac{823\alpha+539}{98(\alpha+3)}ct\lambda^8,g(\partial,\lambda)=t$, where $t\neq0$.
			\item $\bar{\alpha}=7$ and $\alpha=-1$, $f(\partial,\lambda)=-\frac{2}{7}ct\partial^6\lambda^3+\frac{9}{7}ct\partial^5\lambda^4-\frac{9}{7}ct\partial^4\lambda^5+\frac{2}{7}ct\partial^3\lambda^6,g(\partial,\lambda)=t(\partial+\frac{1}{7}\lambda)$, where $t\neq0$. 
			\item $\bar{\alpha}=2$ and  $\alpha=-6$, $f(\partial,\lambda)=-\frac{2}{7}ct\partial^6\lambda^3-3ct\partial^5\lambda^4-12ct\partial^4\lambda^5-24ct\partial^3\lambda^6-\frac{180}{7}ct\partial^2\lambda^7-\frac{99}{7}ct\partial\lambda^8-\frac{22}{7}ct\lambda^9,g(\partial,\lambda)=t(\partial+\frac{6}{7}\lambda)$, where $t\neq0$.
		\end{enumerate}
	The value of $dim(Ext({V}_{\bar{\alpha},\bar{\beta}},{V}_{\alpha,\beta}))$ is 2 in subcase (i), and 1 in the other subcases.
	\end{enumerate}
\end{theorem}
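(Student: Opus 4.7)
The plan is to mimic the cocycle analysis used in the previous theorems of this section, but now with the key twist that the $[A_\lambda A]$-bracket contributes a nontrivial $Q(\partial,\lambda)B$-term, so the usual Virasoro-type cocycle equation for $f$ becomes coupled to $g$. First I would apply the module axioms \eqref{m41}, \eqref{m42}, \eqref{m43} to $\bar v$ and compare coefficients of $v$; this produces three polynomial identities, which (after setting $\partial=\partial+\beta$ to reduce to $\beta=\bar\beta=0$) take the schematic form
\begin{align*}
&f(\partial,\lambda)(\partial+\lambda+\bar\alpha\mu)+f(\partial+\lambda,\mu)(\partial+\alpha\lambda)\\
&\qquad-f(\partial,\mu)(\partial+\mu+\bar\alpha\lambda)-f(\partial+\mu,\lambda)(\partial+\alpha\mu)=(\lambda-\mu)f(\partial,\lambda+\mu)+Q(-\lambda-\mu,\lambda)g(\partial,\lambda+\mu),\\
&g(\partial+\lambda,\mu)(\partial+\alpha\lambda)-g(\partial,\mu)(\partial+\mu+\bar\alpha\lambda)=((a-1)\lambda-\mu)g(\partial,\lambda+\mu).
\end{align*}
The second equation depends only on $g$, $\alpha,\bar\alpha,a$, and I would solve it first: setting $\lambda=0$ forces $\beta=\bar\beta$ (otherwise $g\equiv0$), and then a degree/coefficient analysis pins down exactly the nonzero $g$'s allowed, which occur only when $\bar\alpha-\alpha$ takes one of a short list of values depending on $a$.

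Next I would feed these admissible $g$'s back into the first equation. When $g=0$, the equation reduces to the pure Virasoro cocycle condition governing $\mathrm{Ext}$ between two tensor-density modules, whose solutions are catalogued in Theorem 3.2 of \cite{ckw1} (equivalently Theorem 2.7 of \cite{lhw2}); this gives subcases (i) and the ``$s\neq0, g=0$'' portions of every item in the statement. When $g\neq0$, the right-hand side acquires the forcing term $Q(-\lambda-\mu,\lambda)g(\partial,\lambda+\mu)$; I would substitute the explicit form of $Q$ from the table in Proposition \ref{type12} and, for a fixed admissible $(a,\alpha,\bar\alpha,g)$, solve for $f(\partial,\lambda)$ by ansatz (polynomial of degree dictated by the degrees of $Q$ and $g$). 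The existence of a solution determines in which residue classes $\bar\alpha-\alpha$ a coupled $(f,g)$ extension actually exists, and the particular-solution $f$ is exactly the closed-form polynomial recorded in each subcase; adding the homogeneous (i.e.\ $g=0$) solution accounts for the free parameter $s$ when one is present and raises the dimension count.

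Finally, the triviality/equivalence and dimension claims would be verified by comparing with Lemma \ref{l43}: a solution of the cocycle equation is a coboundary iff $g=0$ and $f(\partial,\lambda)=\varphi(\partial+\lambda)(\partial+\alpha\lambda)-\varphi(\partial)(\partial+\bar\alpha\lambda)$ for some $\varphi\in\mathbb{C}[\partial]$, which removes precisely one redundant parameter in each subcase; the quoted dimension $1$ or $2$ then follows by counting remaining free parameters $s, t, s_0, s_1$. The main obstacle is the case-by-case nature of the forcing term: for $a=-4$ and $a=-6$, $Q$ has degree as high as $8$ and $11$, so solving the inhomogeneous cocycle requires matching a large number of coefficients, producing the rational prefactors and the exceptional values of $\alpha$ (e.g.\ $\alpha=-\tfrac52$, $-\tfrac52\pm\tfrac{\sqrt{19}}{2}$, $-3\pm\tfrac{\sqrt{22}}{2}$) where the generic formula degenerates and a separate ansatz is needed; these exceptional branches account for almost every distinct subcase in items (4) and (5) and are where essentially all the real work lies.
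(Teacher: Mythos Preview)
Your plan is essentially identical to the paper's proof: derive the coupled equations \eqref{te121}--\eqref{te122} from \eqref{m41}--\eqref{m42}, set $\lambda=0$ in \eqref{te122} to force $\beta=\bar\beta$ when $g\neq0$, classify the admissible $g$'s, then solve \eqref{te121} for $f$ case by case with the explicit $Q$ inserted, and finish via Lemma~\ref{l43} and the Virasoro classification in \cite{ckw1,lhw2}. The only difference is that the paper does not redo the classification of nonzero solutions to \eqref{te122} but quotes it directly from \cite[Proposition~3.8 and Corollary~3.10]{lhw2}, which already lists the finitely many allowed $g(\partial,\lambda)$ for each $a$; you may wish to invoke that reference rather than reproduce the degree analysis yourself.
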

\begin{proof}
	Applying both sides of (\ref{m41}) and (\ref{m42}) to $\bar{v}$ and comparing the corresponding efficients, we obtain \begin{align}
	&f(\partial,\lambda)(\partial+\lambda+\bar{\alpha}\mu+\bar{\beta})+f(\partial+\lambda,\mu)(\partial+\alpha\lambda+\beta)-f(\partial,\mu)(\partial+\mu+\bar{\alpha}\lambda+\bar{\beta})\notag\\
	&\qquad -f(\partial+\mu,\lambda)(\partial+\alpha\mu+\beta)=(\lambda-\mu)f(\partial,\lambda+\mu)+Q(-\lambda-\mu,\lambda)g(\partial,\lambda+\mu),\label{te121}\\
	&g(\partial+\lambda,\mu)(\partial+\alpha\lambda+\beta)-g(\partial,\mu)(\partial+\mu+\bar{\alpha}\lambda+\bar{\beta})=((a-1)\lambda-\mu)g(\partial,\lambda+\mu).\label{te122}
	\end{align}
	
	Setting $\lambda=0$ in (\ref{te122}) gives $$g(\partial,\mu)(\beta-\bar{\beta})=0.$$
	
	If $g(\partial,\mu)=0$, then the result follows from Theorem 3.2 in \cite{ckw1} (or Theorem 2.7 in \cite{lhw2}). Now we assume $g(\partial,\lambda)\neq0$ so that $\beta=\bar{\beta}$. If no confusion is possible, we replace $\partial+\beta$ by $\partial$ in the sequel. By Proposition 3.8 and Corollary 3.10 in \cite{lhw2}, for $a\in\{0,-1,-4,-6\}$, the nonzero solutions (up to a nonzero scalar $t$) of (\ref{te122}) are given by 
	\begin{align}
g(\partial,\lambda)=\begin{cases}	
	1,&\alpha-\bar{\alpha}=a-1,\\
\partial-\frac{1}{1-a}\alpha\lambda,&\alpha-\bar{\alpha}=a-2,\\
\partial^2-\frac{1}{1-a}(1+2\alpha)\partial\lambda-\frac{1}{1-a}\alpha\lambda^2,&\alpha=a-2, \bar{\alpha}=1,
	\end{cases}
	\end{align} 
	and for $a=1$,\begin{align}
	g(\partial,\lambda)=\begin{cases}	
	1,&\alpha-\bar{\alpha}=0,\\
	\lambda,&\alpha-\bar{\alpha}=-1,\\
	\partial\lambda-\alpha\lambda^2,&\alpha-\bar{\alpha}=-2,\\
	\partial^2\lambda+3\partial\lambda^2+2\lambda^3,&\alpha=-2,\bar{\alpha}=1.
	\end{cases}
	\end{align} 
Putting these results in \ref{te121}, we can obtain the expression of $f(\partial,\lambda)$ as follows. 

(1) If $a=1,\alpha=\bar{\alpha}$, (\ref{te121}) is writen as \begin{align}\label{te1211}
	&f(\partial,\lambda)(\partial+\lambda+{\alpha}\mu)+f(\partial+\lambda,\mu)(\partial+\alpha\lambda)-f(\partial,\mu)(\partial+\mu+{\alpha}\lambda)-f(\partial+\mu,\lambda)(\partial+\alpha\mu)\notag\\
&\qquad =(\lambda-\mu)f(\partial,\lambda+\mu)+ct(\lambda-\mu).
\end{align}
By the nature of (\ref{te1211}), we may assume that a solution to (\ref{te1211}) is a homogeneous polynomial in $\partial$ and $\lambda$ of degree 0, that is, $f(\partial,\lambda)=s$ for some constant $s$. Taking it in $(\ref{te1211})$, we have $ct=0$ which means $g(\partial,\lambda)=0$ or $Q(\partial,\lambda)=0$. This contradiction illustrates that the equation has no solution in this case.

(2) If $a=1,\alpha-\bar{\alpha}=-1$, (\ref{te121}) is writen as \begin{align}\label{te1212}
&f(\partial,\lambda)(\partial+\lambda+\mu+{\alpha}\mu)+f(\partial+\lambda,\mu)(\partial+\alpha\lambda)-f(\partial,\mu)(\partial+\lambda+\mu+{\alpha}\lambda)\notag\\
&\qquad -f(\partial+\mu,\lambda)(\partial+\alpha\mu)=(\lambda-\mu)f(\partial,\lambda+\mu)+ct(\lambda+\mu)(\lambda-\mu).
\end{align}
By the nature of (\ref{te1212}), we may assume that a solution to (\ref{te1212}) is a homogeneous polynomial in $\partial$ and $\lambda$ of degree 1, that is, $f(\partial,\lambda)=s_1\partial+s_2\lambda$ for some constant $s_1,s_2$. Taking it in $(\ref{te1212})$, we have $s_1=\frac{ct}{\alpha}$ with $\alpha\neq0$ which means $f(\partial,\lambda)=\frac{ct}{\alpha}\partial+s_2\lambda$. For other homogeneous parts, one can refer to the case that $g(\partial,\lambda)=0$. 

We have a similar discussion on the remaining 14 cases. The final results follow from Lemma \ref{l43} and Theorem 3.2 in \cite{ckw1} (or Theorem 2.7 in \cite{lhw2}). 
\end{proof}
	
	\end{document}